      \title[$K$-theory, assembly maps, controlled algebra, and trace methods]%
            {Algebraic $\boldsymbol{K}$-theory, assembly maps,\\
             controlled algebra, and trace methods\\[\smallskipamount]
             \textmd{\small A primer and a survey of the Farrell-Jones Conjecture}}
     \author{Holger Reich}
    \address{Institut für Mathematik, Freie Universität Berlin, Germany}
      \email{\hemail{holger.reich@fu-berlin.de}}
    \urladdr{\hurl{mi.fu-berlin.de/math/groups/top/members/Professoren/reich.html}}
     \author{Marco Varisco}
    \address{Department of Mathematics and Statistics, University at Albany, SUNY, USA}
      \email{\hemail{mvarisco@albany.edu}}
    \urladdr{\hurl{albany.edu/~mv312143/}}
  \subjclass[2010]{\MSC{19-02}}
   \keywords{Algebraic $K$-theory, Farrell-Jones Conjecture}
       \date{February 23, 2018}
\def\@seccntformat#1{%
  \protect\textup{%
    \protect\@secnumfont
    \expandafter\protect\csname format#1\endcsname
    \csname the#1\endcsname
    \protect\@secnumpunct
  }%
}
\setlist{leftmargin=*}
\newcommand*{\hurl}  [2][www.]{\href{https://#1#2}{\nolinkurl{#2}}}
\newcommand*{\hemail}[1]{\href{mailto:#1}{\nolinkurl{#1}}}
\newcommand*{\hDOI}  [1]{\href{https://dx.doi.org/#1}{DOI~\nolinkurl{#1}}}
\newcommand*{\arXiv} [1]{\href{https://www.arxiv.org/abs/#1}{\nolinkurl{arXiv:#1}}}
\newcommand*{\MSC}   [1]{\href{https://www.ams.org/msc/msc2010.html?t=#1}{#1}}
\theoremstyle{plain}
\newtheorem{claim}[equation]{Claim}
\newtheorem{conjecture}[equation]{Conjecture}
\newtheorem{conjecture-FJ}[equation]{Farrell-Jones Conjecture}
\newtheorem{corollary}[equation]{Corollary}
\newtheorem{lemma}[equation]{Lemma}
\newtheorem{proposition}[equation]{Proposition}
\newtheorem{theorem}[equation]{Theorem}
\newtheorem{criterion}[equation]{Criterion}
\newtheorem{transitivity}[equation]{Transitivity Principle}
\theoremstyle{definition}
\newtheorem{example}[equation]{Example}
\newtheorem{examples}[equation]{Examples}
\newtheorem{remark}[equation]{Remark}
\newtheorem{definition}[equation]{Definition}
\newcommand*{\one}  {\text{\ding{192}}} \newcommand*{\bone}  {\text{\ding{202}}}
\newcommand*{\two}  {\text{\ding{193}}} \newcommand*{\btwo}  {\text{\ding{203}}}
\newcommand*{\three}{\text{\ding{194}}} \newcommand*{\bthree}{\text{\ding{204}}}
\newcommand*{\four} {\text{\ding{195}}} \newcommand*{\bfour} {\text{\ding{205}}}
\newcommand*{\five} {\text{\ding{196}}} 
\newcommand*{\six}  {\text{\ding{197}}} 
\newcommand*{\seven}{\text{\ding{198}}} 
\newcommand*{\eight}{\text{\ding{199}}} 
\newcommand*{\nine} {\text{\ding{200}}}
\DeclareMathAlphabet{\matheurm}{U}{eur}{m}{n}
\newcommand*{\define}[5]{%
  \ifstrequal{#2}{*}{\expandafter#1\expandafter*}{\expandafter#1}%
  \csname#4#5\endcsname{#3{#5}}
}
\newcommand*{\wh}{\mathit{Wh}}
\newcommand*{\hh}{\mathit{HH}}
\newcommand*{\TO}  [1][]{\stackrel{#1}{\longrightarrow}}
\newcommand*{\MOR} [4][]{#2\colon#3\TO[#1]#4}
\newcommand*{\AND}{\qquad\text{and}\qquad}
\newcommand*{\aND} {\quad\text{and}\quad}
\DeclarePairedDelimiterX\SET[2]{\{}{\}}{\,#1\;\delimsize\vert\;#2\,}
\newcommand*{\ds}{\displaystyle}
\newcommand*{\ts}{\textstyle}
\newcommand*{\op}{{\operatorname{op}}}
\newcommand*{\Eucl}{{\operatorname{Eucl}}}
\newcommand*{\compl}{{\wedge}}
\DeclareMathOperator*{\smallcoprod}{\ts\coprod}
\DeclareMathOperator*{\smallprod}  {\ts\prod}
\DeclareMathOperator*{\tensor}     {\otimes}
\DeclareMathOperator*{\timesd}     {\times}
\DeclareMathOperator*{\sma}        {\wedge}
\newcommand*{\Cyc}   {{\mathcal{C} \hspace{-.2ex}\mathit{yc}}}
\newcommand*{\VCyc}  {{\mathcal{VC}\hspace{-.2ex}\mathit{yc}}}
\newcommand*{\FCyc}  {{\mathcal{FC}\hspace{-.2ex}\mathit{yc}}}
\newcommand*{\MaxCyc}{{\mathcal{M} \hspace{-.2ex}\mathit{ax}\mspace{1.5mu}\Cyc}}
\newcommand*{\Fin}   {{\mathcal{F} \hspace{-.2ex}\mathit{in}}}
\newcommand*{\oid}[2]{#1\!\smallint\!#2} 
\newcommand*{\assum}[2]{$[#1_#2]$}
\newcommand*{\Cp}[2][p]{C_{{#1}^{#2}}}
\newcommand*{\FJclass}{\mathfrak{G}}
\newcommand*{\BHM}{Bökstedt-Hsiang-Madsen}
\newcommand*{\totient}{\varphi}
\begin{document}

\begin{abstract}
We give a concise introduction to the Farrell-Jones Conjecture in algebraic $K$-theory and to some of its applications.
We survey the current status of the conjecture, and we illustrate the two main tools that are used to attack it: controlled algebra and trace methods.
\end{abstract}


\maketitle
\microtypesetup{protrusion=false}
\tableofcontents
\microtypesetup{protrusion=true}
\thispagestyle{empty}


\section{Introduction}

The classification of manifolds and the study of their automorphisms are central problems in mathematics.
For manifolds of sufficiently high dimension, these problems can often be successfully solved using algebraic topological invariants in the algebraic $K$-theory and $L$-theory of group rings.

In an article published in 1993~\cite{FJ-iso}, Tom Farrell and Lowell Jones formulated a series of \emph{Isomorphism Conjectures} about the $K$ and~$L$-theory of group rings, which became universally known as the \emph{Farrell-Jones Conjectures}.
On the one hand these conjectures represented the culmination of decades of seminal work by Farrell, Jones, and Wu Chung Hsiang, e.g.~\cite{FH-space-form}, \cite{FH-Novikov}, \cite{FH-poly}, \cite{Hsiang-ICM}, \cite{FJ-K-dynamics}, \cite{FJ-Mostow}.
On the other hand they have motivated and continue to motivate an impressive body of research.

In this article we focus only on the Farrell-Jones Conjecture for algebraic $K$-theory, and mention briefly some of its variants in Subsection~\ref{subsec:related-conjectures}.
We give a concise introduction to this conjecture and to some of its applications, survey its current status, and most importantly we explain the main ideas and tools that are used to attack  the conjecture: controlled algebra and trace methods.

Section~\ref{sec:conjectures} begins with some fundamental conjectures in algebra and geometric topology, which can be reformulated in terms of $K_0$ and~$K_1$ of group rings.
These conjectures are all implied by the Farrell-Jones Conjecture, but they are more accessible and elementary; moreover, their importance and appeal do not require algebraic $K$-theory, but may serve as motivation to study it.

In Subsections~\ref{subsec:assembly} and~\ref{subsec:FJ} we define assembly maps and use them to formulate the Farrell-Jones Conjecture.
Then we discuss how the Farrell-Jones Conjecture implies all other conjectures discussed in this article.

In Section~\ref{sec:state} we collect most of what is known today about the Farrell-Jones Conjecture in algebraic $K$-theory.
We invite the reader to compare that section to the corresponding Section~2.6 in the survey article~\cite{LR-survey} from~2005, to appreciate the tremendous amount of activity and progress that has taken place since then.

The last two sections focus on proofs.
In Section~\ref{sec:controlled} we introduce the basic concepts of controlled algebra and see them at work.
In particular, we give an almost complete proof of the Farrell-Jones Conjecture in the simplest nontrivial case, that of the free abelian group on two generators.
Many ingenious ideas, mainly going back to Farrell and Hsiang, enter the proof already in this seemingly basic case.
This section is meant to be an accessible introduction to controlled algebra.
We do not even mention the very important flow techniques, and highly recommend Arthur Bartels’s survey article~\cite{Bartels-proofs}.

In Section~\ref{sec:trace} we illustrate how trace methods are used to prove rational injectivity results about assembly maps.
We give a complete proof of an elementary but illuminating statement about~$K_0$ in Subsection~\ref{subsec:warm-up}, and then explain how this idea can be generalized using more sophisticated tools like topological Hochschild homology and topological cyclic homology.
The complicated technical details underlying the construction of these tools are beyond the scope of this article, and we refer the reader to~\cite{DGM-book}, \cite{Hesselholt-handbook}, and~\cite{Madsen} for more information.
However, we carefully explain the structure of the proof of the algebraic $K$-theory Novikov Conjecture due to Marcel Bökstedt, Hsiang, and Ib Madsen~\cite{BHM}.
We follow the point of view used by the authors in joint work with Wolfgang Lück and John Rognes~\cite{kc}, leading to a generalization of this theorem for the Farrell-Jones assembly map.
In particular, we highlight the importance of a variant of topological cyclic homology, \BHM's functor~$C$, which has seemingly disappeared from the literature since~\cite{BHM}.

We tried to make our exposition accessible to nonexperts, and no deeper knowledge of algebraic $K$-theory is required.
However, we expect our reader to have seen the basic definitions and properties of $K_0$ and $K_1$, and to be willing to accept the existence of a spectrum-valued algebraic $K$-theory functor.
Classical and less classical sources for the $K$-theoretic background include
\cite{Bass-book},
\cite{Cortinas-friendly},
\cite{DGM-book},
\cite{Milnor-book},
\cite{Rosenberg},
and~\cite{Weibel}.

There are other survey articles about the Farrell-Jones and related conjectures: \cite{Bartels-proofs}, \cite{LR-survey},  and~\cite{Madsen}, which we already recommended, and also~\cite{L-ICM} and the voluminous book project~\cite{L-book-project}.
Our hope is that this contribution may serve as a more concise and accessible starting point, preparing the reader for these other more advanced surveys and for the original articles.

\subsubsection*{Acknowledgments}
This work was supported by the Collaborative Research Center~647 \emph{Space -- Time -- Matter} in Berlin and by a grant from the Simons Foundation (\#419561, Marco Varisco).


\section{Conjectures} \label{sec:conjectures}

In this section we discuss many conjectures related to group rings and their algebraic $K$-theory.
These conjectures are all implied by the Farrell-Jones Conjecture, which we formulate in Subsection~\ref{subsec:FJ}.
All of these conjectures are known in many cases but open in general, as we review in Section~\ref{sec:state}.


\subsection{Idempotents and projective modules}

An element $p$ in a ring is an idempotent if~$p^2=p$.
The trivial examples are the elements $0$ and~$1$.

\begin{conjecture}[trivial idempotents]
\label{conj:Kaplansky}
\index{conjecture!trivial idempotents}
Let $\Bbbk$ be a field of characteristic zero and let $G$ be a torsion-free group.
Then every idempotent in the group ring $\Bbbk[G]$ is trivial.
\end{conjecture}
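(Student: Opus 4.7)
The plan is to use the Farrell–Jones Conjecture to force $K_0(\Bbbk[G])$ to be as small as possible, and then exploit the Hattori–Stallings trace to transfer this smallness back to an arbitrary idempotent in $\Bbbk[G]$. Given an idempotent $p\in\Bbbk[G]$, the right ideal $P = p\Bbbk[G]$ is a finitely generated projective module, and we have the internal decomposition $\Bbbk[G]=p\Bbbk[G]\oplus(1-p)\Bbbk[G]$, so that $[P]$ and $[(1-p)\Bbbk[G]]$ add up to $[\Bbbk[G]]$ in $K_0(\Bbbk[G])$.

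First I would show, using the Farrell–Jones Conjecture, that $K_0(\Bbbk[G])\cong\mathbb{Z}$ with generator the class of the free module $\Bbbk[G]$ and with the isomorphism realized by the augmentation $\Bbbk[G]\to\Bbbk$. The Farrell–Jones assembly map identifies $K_*(\Bbbk[G])$ with the equivariant homology $H_*^G(E_{\VCyc}G;\mathbf{K}(\Bbbk))$; because $G$ is torsion-free the only virtually cyclic subgroups are trivial or infinite cyclic, and because $\Bbbk$ is a regular ring the Bass–Heller–Swan formula for $K_*(\Bbbk[\mathbb{Z}])$ has vanishing Nil-summands. A transitivity argument then replaces the family $\VCyc$ by the trivial family, giving $K_0(\Bbbk[G])\cong H_0(BG;\mathbf{K}(\Bbbk))\cong K_0(\Bbbk)\cong\mathbb{Z}$.

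Next I would feed this into the Hattori–Stallings trace $K_0(\Bbbk[G])\to\Bbbk[G]/[\Bbbk[G],\Bbbk[G]]$, whose target has a $\Bbbk$-basis indexed by conjugacy classes of $G$. The class of $P=p\Bbbk[G]$ maps to the image of $p$ in this quotient, while $[\Bbbk[G]]$ maps to $[e]$. Writing $[P]=n\cdot[\Bbbk[G]]$ for some $n\in\mathbb{Z}$ and $p=\sum_g a_g g$, comparison of traces forces $a_e=n$ and $\sum_{g\in C}a_g=0$ for every nontrivial conjugacy class $C$. To close the argument I would use that under our hypotheses $P$ and $(1-p)\Bbbk[G]$ are finitely generated projective modules whose $K_0$-classes are realized by non-negative integers via the augmentation; since the two classes sum to $1$, one of them must be the zero class, and a cancellation argument then pins down $p\in\{0,1\}$ in $\Bbbk[G]$ itself.

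The deep, central obstacle is of course the Farrell–Jones Conjecture itself, which supplies the computation of $K_0(\Bbbk[G])$. Even granting it, the most delicate step in the above reduction is the last one: knowing only $K_0(\Bbbk[G])\cong\mathbb{Z}$ merely forces the class $[P]$ to be $0$ or $1$, and upgrading this to $p\in\{0,1\}$ in $\Bbbk[G]$ requires a positivity/cancellation argument on top of the bare $K_0$ computation, typically phrased in terms of the (strong) Bass conjecture or the nonexistence of nonzero stably-zero projective modules over $\Bbbk[G]$.
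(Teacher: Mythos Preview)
Your overall strategy matches the paper's (Proposition~\ref{4implies1}): use Farrell--Jones to force $K_0(\Bbbk[G])\cong\IZ$ generated by~$[\Bbbk[G]]$, deduce that the class of $\Bbbk[G]p$ is $0$ or~$1$, and then upgrade this to $p\in\{0,1\}$. Two remarks.

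First, the Hattori--Stallings trace is an unnecessary detour. The paper uses only the augmentation $\epsilon\colon\Bbbk[G]\to\Bbbk$: applying $\epsilon_*=\Bbbk\otimes_{\Bbbk[G]}-$ to $\Bbbk[G]\cong\Bbbk[G]p\oplus\Bbbk[G](1-p)$ already shows that one summand has vanishing $\epsilon_*$, and then $K_0(\Bbbk[G])\cong\IZ$ gives $\Bbbk[G]p\oplus\Bbbk[G]^n\cong\Bbbk[G]^n$ for some~$n$. Your trace computation recovers the same information (your $a_e=n$ is just $\dim_\Bbbk\epsilon_*(\Bbbk[G]p)$), but with extra bookkeeping that is never used.

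Second, and more importantly, the ``delicate step'' you flag at the end is not an open problem: it is a \emph{theorem} of Kaplansky that for any field~$\Bbbk$ of characteristic zero and any group~$G$, the ring $\Bbbk[G]$ is stably finite, i.e., $M\oplus\Bbbk[G]^n\cong\Bbbk[G]^n$ forces $M=0$. This is exactly your ``nonexistence of nonzero stably-zero projective modules,'' and it closes the argument immediately. By contrast, invoking the Bass conjecture here is misleading: that conjecture concerns the support of the Hattori--Stallings rank and is neither needed nor helpful for this step, whereas Kaplansky's stable-finiteness result is an elementary fact (proved via the faithful trace on $\Bbbk[G]$) and is precisely what you are missing.
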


The assumption that $G$ is torsion-free is necessary: if $g \in G$ is an element of finite order~$n$, then $\frac{1}{n} \sum_{i=0}^{n-1} g^i$ is a nontrivial idempotent in~$\IQ[G]$.

A counterexample to the conjecture above would be in particular a zero-divisor in~$\Bbbk [G]$, and hence a counterexample to Problem~6 in Irving Kaplansky’s famous problem list~\cite{Kaplansky1956}, which is reproduced in~\cite{Kaplansky1970}.

It is interesting to notice that the analog of Conjecture~\ref{conj:Kaplansky} for the integral group ring is true for all groups, even for groups with torsion.
The proof that we give below uses operator algebras, as suggested in~\cite[page~451]{Kaplansky1970}, and therefore it is very different from the rest of this paper, even though the idea of using traces plays a central role in Section~\ref{sec:trace}.

\begin{theorem}
For any group~$G$, every idempotent in the integral group ring~$\IZ[G]$ is~trivial.
\end{theorem}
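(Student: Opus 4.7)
The plan is to pass from the purely algebraic setting of $\IZ[G]$ into operator algebras, by means of the left regular representation embedding $\IZ[G] \hookrightarrow \IC[G] \hookrightarrow \CN(G) \subseteq B(\ell^2(G))$, where $\CN(G)$ denotes the group von Neumann algebra. The key tool is the canonical trace $\tau \colon \CN(G) \to \IC$ defined by $\tau(T) = \langle T \delta_e, \delta_e \rangle$, which on $\IC[G]$ is just the coefficient of the unit: $\tau\bigl(\sum_{g \in G} a_g g\bigr) = a_e$. Two properties of $\tau$ will drive the argument: it is a \emph{trace}, i.e.\ $\tau(xy) = \tau(yx)$, and it is \emph{faithful} and \emph{positive}, meaning $\tau(T^*T) \geq 0$ with equality only when $T = 0$.

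Given an idempotent $e \in \IZ[G]$, I would first associate to it a self-adjoint projection $p \in \CN(G)$ with $\tau(p) = \tau(e)$. This is a standard $C^*$-algebraic fact: every idempotent is similar to a projection, and one can write down an explicit formula for $p$ in terms of $e$ and $e^*$ (using the positive invertible element $1 + (e-e^*)(e^*-e)$) that yields the relations $ep = p$ and $pe = e$. Applying the trace property to these relations then gives $\tau(p) = \tau(ep) = \tau(pe) = \tau(e)$.

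For the projection $p$, positivity of $\tau$ applied to $p = p^*p$ and to $1 - p = (1-p)^*(1-p)$ yields $0 \leq \tau(p) \leq 1$, and hence $\tau(e) \in [0,1]$. But $e \in \IZ[G]$ forces $\tau(e) = a_e \in \IZ$, so in fact $\tau(e) \in \{0, 1\}$. Faithfulness now closes the argument. If $\tau(e) = 0$, then $\tau(p^*p) = \tau(p) = 0$ forces $p = 0$, and therefore $e = ep = 0$. If $\tau(e) = 1$, apply the same reasoning to the idempotent $1 - e \in \IZ[G]$.

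The main obstacle --- and the only step that genuinely leaves the world of ring theory --- is the passage from an arbitrary idempotent to a projection with the same trace. Once that operator-algebraic input is in place, the conclusion reduces to the elementary integrality statement $\tau(e) \in \IZ$ combined with the two essential structural features of the canonical trace on $\CN(G)$: positivity and faithfulness.
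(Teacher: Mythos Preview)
Your proof is correct and follows essentially the same strategy as the paper's: embed into an operator algebra ($\CN(G)$ for you, $C_r^*G$ in the paper), use the canonical faithful positive trace, replace the idempotent by a projection with the same trace, invoke positivity to get the trace in~$[0,1]$, integrality to force $\{0,1\}$, and faithfulness to conclude. The only cosmetic difference is the mechanism for producing the projection: the paper quotes the fact that every idempotent in a $C^*$-algebra is \emph{similar} to a projection ($p=u^{-1}qu$), whereas you use the explicit Kaplansky range-projection formula with the relations $ep=p$, $pe=e$; both are standard and lead to the same endgame. (Minor slip: in your last paragraph ``$e=ep=0$'' should read ``$e=pe=0$''.)
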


\begin{proof}
The integral group ring embeds into the reduced complex group $C^*$-algebra $C_r^* G$, and the map
$\IZ [G] \TO \IZ$, $\sum a_g g \longmapsto a_e$ extends to a positive faithful trace $\mathit{tr} \colon C_r^* G \TO \IC$. Let $p \in \IZ[G]$ be an idempotent, i.e., $p=p^2$. It is known that in the $C^*$-algebra $C_r^*G$ every idempotent is similar to a projection, i.e., there exist $q,u \in C^*_rG$ such that $q=q^2=q^*$, $u$~is invertible, and~$p = u^{-1}q u$; see for example \cite[Proposition~1.8, Lemma~1.18]{Cuntz-Rosenberg}. Therefore $\mathit{tr}(p) = \mathit{tr}(q)$. Applying the trace to $1= q + (1-q) = q^*q + (1-q)^*(1-q)$ and using positivity one sees that the trace of~$q$ lies in $[0,1]$. The trace of~$p$ is clearly an integer. Therefore $\mathit{tr} (q) =0$ or $\mathit{tr}( q ) =1$. By faithfulness of the trace this implies that $q=0$ or $q=1$, and then the same holds for $p=u^{-1}qu$.
\end{proof}

The module $Rp$ for an idempotent $p=p^2$ in the ring $R$ is an example of a finitely generated projective left $R$-module.
In view of the conjecture and the result above it seems natural to ask whether all finitely generated projective modules over group rings of torsion-free groups are necessarily free.
Again, the assumption that $G$ is torsion-free is necessary: if $g \in G$ is an element of finite order $n$, then for the non-trivial idempotent $p=\frac{1}{n} \sum_{i=0}^{n-1} g^i\in\IQ[G]$ the module $\IQ[G]p$ is projective but not free.

\begin{examples}
\label{ex:ex}
\begin{enumerate}[label=(\roman*), widest=iii]
\item
Over fields and over principal ideal domains, hence in particular over the polynomial and Laurent polynomial rings $\Bbbk [t]$ and~$\Bbbk[t^{\pm 1}]$ with coefficients in a field~$\Bbbk$, all projective modules are free.
\item \label{i:serre-conj}
The question whether finitely generated projective modules over the polynomial ring $\Bbbk[t_1 , \ldots , t_n]$ for $n \geq 2$ are necessarily free was raised by Jean-Pierre Serre\index{conjecture!Serre} in~\cite{Serre-Faisceaux}, and was answered affirmatively only 21 years later independently by Dan Quillen and Andrei Suslin. The wonderful book \cite{Lam-Serre-problem} gives a detailed account of this exciting story.
\end{enumerate}
\noindent
The polynomial ring $R[t_1, \ldots , t_n]$ is the monoid algebra $R[A]$ of the free abelian monoid $A$ generated by $\{ t_1 , \ldots , t_n \}$. The statement in \ref{i:serre-conj} was generalized as follows to monoid algebras.
\begin{enumerate}[label=(\roman*), resume]
\item \label{i:generalize-Serre}
If $R$ is a principal ideal domain, then every finitely generated projective module over the monoid algebra $R[A]$ is free
provided that $A$ is a semi-normal, abelian, cancellative monoid without nontrivial units
\cite{Gubeladze}, \cite{Swan-generalize-Serre}.
Free abelian groups are examples of monoids satisfying these conditions.
\item\label{Bass-free-groups}
If $R$ is a principal ideal domain and $F$ a finitely generated free group, then every finitely generated projective module over the group ring $R[F]$ is free \cite{Bass-free-groups}.
\end{enumerate}
At this point one could over-optimistically conjecture that every finitely generated projective $\IQ[G]$-module is free if $G$ is a torsion-free group. However:
\begin{enumerate}[label=(\roman*), resume]
\item\label{i:dunwoody}
Martin Dunwoody constructed in~\cite{Dunwoody} a torsion-free group $G$ and a finitely generated projective $\IZ[G]$-module $P$ which is not free but has the property that $P \oplus \IZ[G] \cong \IZ[G] \oplus \IZ[G]$.
There are also finitely generated projective modules over $\IQ[G]$ with analogous properties.
\end{enumerate}
\end{examples}

A weakening of the question above is whether all finitely generated projective $R[G]$-modules are induced from finitely generated projective $R$-modules when $G$ is torsion-free.
Recall that $K_0( R )$ is defined as the group completion of the monoid of isomorphism classes of finitely generated projective $R$-modules.
The surjectivity of the natural map
\[
K_0( R) \TO K_0 ( R[G] )
\]
induced by $[M] \longmapsto \bigl[ R[G] \tensor_R M \bigr]$ studies the stable version of this question:
is every finitely generated projective $R[G]$-module $P$ stably induced?
I.e., is there an $n \geq 0$ such that $P \oplus R[G]^n$ is induced from a finitely generated projective $R$-module?
Notice that this is true for Dunwoody's example~\ref{i:dunwoody} above.
The stable version of Serre's Conjecture~\ref{i:serre-conj} above is a lot easier to prove and was established much earlier in~\cite[Proposition~10]{Serre1957}.

This discussion leads to the following conjecture.
In order to formulate it, we need to recall some notions from the theory of rings.
A ring~$R$ is called left Noetherian if submodules of finitely generated left modules are always finitely generated, and it is said to have finite left global dimension if every left module has a projective resolution of finite length.
If both properties hold, then $R$ is called left regular.
In the sequel we only consider left modules and therefore simply say \emph{regular} instead of left regular.
The ring of integers~$\IZ$, all PIDs, and all fields are examples of regular rings.

\begin{conjecture}
\label{conj:FJ-K_0-regular}
Let $R$ be a regular ring, and assume that the orders of all finite subgroups of~$G$ are invertible in~$R$.
Then the map
\[
\colim_{H\in\obj\Sub G(\Fin)}
K_0(R[H])
\TO[\cong]
K_0(R[G])
\]
is an isomorphism.
In particular, if $G$ is torsion-free, then for any regular ring~$R$ there is an isomorphism
\[
K_0(R)\TO[\cong]K_0(R[G])
\,.
\]
\end{conjecture}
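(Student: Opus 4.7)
The plan is to deduce this conjecture from the Farrell-Jones Conjecture in algebraic $K$-theory, which will be formulated in Subsection~\ref{subsec:FJ}. That conjecture asserts that a certain assembly map, with source an equivariant homology theory evaluated on a classifying space for the family $\VCyc$ of virtually cyclic subgroups and target $\mathbf{K}(R[G])$, is a weak equivalence of spectra. Passing to~$\pi_0$ will deliver the isomorphism of the statement, provided I can reduce the family from $\VCyc$ to $\Fin$ and then identify $\pi_0$ of the equivariant homology theory on $E_\Fin G$ with the displayed colimit.

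For the reduction from $\VCyc$ to $\Fin$, I would invoke a Transitivity Principle. The difference between the two theories is controlled by Bass--Heller--Swan type Nil-contributions coming from infinite virtually cyclic subgroups; over a regular ring in which the orders of the finite subgroups of~$G$ are invertible, these Nil-terms vanish, and this is precisely where the hypotheses on~$R$ and on the finite subgroups of~$G$ enter. To then identify $\pi_0$ of the equivariant homology theory on $E_\Fin G$ with $\colim_{H\in\obj\Sub G(\Fin)}K_0(R[H])$, one realizes the equivariant homology as a homotopy colimit of spectra $\mathbf{K}(R[H])$ indexed over the subgroup category restricted to $\Fin$, and applies $\pi_0$ to pass from a homotopy colimit to the honest colimit. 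In the torsion-free case only the trivial subgroup contributes, so the colimit collapses to $K_0(R)$.

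The main obstacle is of course that the Farrell-Jones Conjecture is itself open in general, so this strategy proves the $K_0$-statement only in the many cases where Farrell-Jones has been established, surveyed in Section~\ref{sec:state}. A direct elementary proof avoiding the assembly machinery would, in the torsion-free case, amount to constructing for every finitely generated projective $R[G]$-module an explicit stable induction from a projective $R$-module and verifying injectivity of induction; Dunwoody's example~\ref{i:dunwoody} shows that one cannot hope to drop the word \emph{stable}, and no ad hoc argument for the stable statement is known beyond what Farrell-Jones provides. Thus the conceptual heart of the problem is exactly the construction and analysis of the assembly map itself, which motivates the rest of the paper.
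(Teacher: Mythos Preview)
Your overall strategy matches the paper's: deduce the statement from the Farrell-Jones Conjecture, reduce the family from $\VCyc$ to $\Fin$, and then identify $\pi_0$ of the source of the assembly map with the displayed colimit. However, there is a gap in the last step. The assertion that one can ``apply $\pi_0$ to pass from a homotopy colimit to the honest colimit'' is not valid in general for spectra: the equivariant Atiyah--Hirzebruch spectral sequence has $E^2_{s,t}=H^G_s\bigl(EG(\Fin);K_t(R[\oid{G}{-}])\bigr)$, and negative $K$-groups with $t<0$ contribute to $\pi_0$ unless they vanish. The paper fills this by observing that the hypotheses---$R$ regular \emph{and} the orders of all finite subgroups invertible in~$R$---together imply that $R[H]$ is itself regular for every finite $H\le G$, whence $K_t(R[H])=0$ for $t<0$ and the spectral sequence is first-quadrant. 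You attribute the hypotheses only to the Nil-term vanishing needed for the $\VCyc\to\Fin$ reduction; they are used a second and equally essential time here, and without this the identification of $\pi_0$ with the colimit does not go through.

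A minor point: the homotopy colimit description of the source is naturally indexed over the orbit category $\Or G$ restricted to~$\Fin$, not over $\Sub G(\Fin)$. The passage to $\Sub G(\Fin)$ happens only after taking the ordinary colimit of $K_0$, using that inner automorphisms act trivially on $K$-theory; this is the final step in the paper's chain of isomorphisms.
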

Here the colimit is taken over the finite subgroup category~$\Sub G(\Fin)$, whose objects are the finite subgroups $H$ of~$G$ and whose morphisms are
defined as follows.
Given finite subgroups $H$ and~$K$ of~$G$, let $\conhom_G(H,K)$ be the set of all group homomorphisms $H\TO K$ given by conjugation by an element of~$G$.
The group $\inn(K)$ of inner automorphisms of~$K$ acts on~$\conhom_G(H,K)$ by post-composition.
The set of morphisms in~$\Sub G(\Fin)$ from $H$ to~$K$ is then defined as the quotient~$\conhom_G(H,K)/\inn(K)$. Since inner conjugation induces the identity on $K_0( R[ - ] )$, this is indeed a well defined functor on $\Sub G ( \Fin )$.
In the special case when $G$ is abelian, the category $\Sub G(\Fin)$ is just the poset of finite subgroups of~$G$ ordered by inclusion.

\begin{proposition} \label{4implies1}
Conjecture~\ref{conj:FJ-K_0-regular} implies Conjecture~\ref{conj:Kaplansky}.
\end{proposition}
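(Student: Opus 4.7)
The plan is to apply Conjecture~\ref{conj:FJ-K_0-regular} with $R=\Bbbk$ and then rerun the $C^*$-algebra trace argument used above in the integral case. Since $\Bbbk$ is a field, hence regular, and $G$ is torsion-free, the hypotheses hold vacuously and the colimit over $\Sub G(\Fin)$ collapses to $K_0(\Bbbk)\cong\IZ$, so $K_0(\Bbbk[G])\cong\IZ$ with generator $[\Bbbk[G]]$.

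Given an idempotent $p\in\Bbbk[G]$, the decomposition $\Bbbk[G]=\Bbbk[G]p\oplus\Bbbk[G](1-p)$ exhibits $\Bbbk[G]p$ as a finitely generated projective module with $[\Bbbk[G]p]=n\cdot[\Bbbk[G]]$ for a unique $n\in\IZ$. I would exploit the Kaplansky trace $\kappa\colon\Bbbk[G]\to\Bbbk$, $\sum_g a_g g\mapsto a_e$, which is a ring-theoretic trace and hence induces a Hattori--Stallings homomorphism $K_0(\Bbbk[G])\to\Bbbk$ sending $[\Bbbk[G]]\mapsto 1$ and $[\Bbbk[G]p]\mapsto\kappa(p)$; this forces $\kappa(p)=n\in\IZ$.

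To bound this integer, I would replace $\Bbbk$ by a countable subfield containing the coefficients of $p$, embed it into $\IC$, and view $p$ as an idempotent in $\IC[G]\subset C_r^*G$, on which the canonical trace $\mathit{tr}$ restricts to $\kappa$. Rerunning the proof of the integral case produces a projection $q\in C_r^*G$ similar to $p$ with $\mathit{tr}(p)=\mathit{tr}(q)\in[0,1]$ by positivity; combined with the integrality obtained above this gives $\kappa(p)\in\IZ\cap[0,1]=\{0,1\}$, and faithfulness of $\mathit{tr}$ forces $p=0$ or $p=1$ exactly as before.

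The main obstacle is the interface between the algebra and the analysis: Conjecture~\ref{conj:FJ-K_0-regular} supplies only the integrality $\kappa(p)\in\IZ$, whereas the upper bound $\kappa(p)\le 1$ and the rigidity step that upgrades the $K_0$-computation to the literal equality $p\in\{0,1\}$ both rest on the $C^*$-algebra positivity and faithfulness invoked in the integral case. Without this analytic input one can still apply the augmentation to show $n\in\{0,1\}$, but concluding $p=0$ or $p=1$ from $[\Bbbk[G]p]=0$ or $[\Bbbk[G]p]=[\Bbbk[G]]$ requires stable finiteness of $\Bbbk[G]$, which itself ultimately relies on the same $C^*$-trace argument.
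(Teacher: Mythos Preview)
Your proof is correct, but it takes a different route from the paper's. The paper uses the augmentation $\MOR{\epsilon}{\Bbbk[G]}{\Bbbk}$ rather than the Hattori--Stallings trace: from $\Bbbk\cong\epsilon_*\Bbbk[G]p\oplus\epsilon_*\Bbbk[G](1-p)$ and the fact that $\Bbbk$ is a field, one of the two summands vanishes, say $\epsilon_*\Bbbk[G]p=0$. Then the isomorphism $K_0(\Bbbk)\cong K_0(\Bbbk[G])$ gives $\Bbbk[G]p\oplus\Bbbk[G]^n\cong\Bbbk[G]^m$ for some $n,m$, applying $\epsilon_*$ forces $n=m$, and Kaplansky's theorem that $\Bbbk[G]$ is stably finite for $\Bbbk$ of characteristic zero (the paper cites Montgomery) finishes the argument. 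This is exactly the alternative you sketch in your final paragraph.

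So the difference is one of packaging. You invoke the $C^*$-algebraic trace directly, at the cost of the countable-subfield reduction and the embedding into~$\IC$; the paper instead hides the analytic input inside the black box of stable finiteness. Your remark that stable finiteness ``ultimately relies on the same $C^*$-trace argument'' is fair for Kaplansky's original proof via the group von Neumann algebra, though perhaps slightly overstated: there are variants (e.g., Montgomery's) that are more algebraic in flavor, even if still trace-based. Either way, the paper's route is a touch cleaner since it avoids the field-embedding step and stays entirely within module theory once stable finiteness is granted.
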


\begin{proof}
Let $\Bbbk$ be a field of characteristic zero and let $G$ be a torsion-free group.
Let~$\MOR{\epsilon}{\Bbbk[G]}{\Bbbk}$ denote the augmentation and write $\epsilon_*M=\Bbbk\tensor_{\Bbbk[G]}M$.
If $p\in\Bbbk[G]$ is an idempotent, then
\[
\Bbbk[G]\cong\Bbbk[G]p\oplus\Bbbk[G](1-p)
\AND
\Bbbk\cong\epsilon_*\Bbbk[G]\cong\epsilon_*\Bbbk[G]p\oplus\epsilon_*\Bbbk[G](1-p)
\,.
\]
Since $\Bbbk$ is a field, either $\epsilon_*\Bbbk[G]p$ or $\epsilon_*\Bbbk[G](1-p)$ is the zero module.
Replacing $p$ by~$1-p$ if necessary, let us assume that $\epsilon_*\Bbbk[G]p$ is zero.
The assumption $\IZ\cong K_0(\Bbbk)\cong K_0(\Bbbk[G])$ implies that there exist $n$ and $m$ such that
\[
\Bbbk[G]p\oplus\Bbbk[G]^n\cong\Bbbk[G]^m.
\]
Applying $\epsilon_*$ we see that $n=m$, and from this we conclude that $\Bbbk[G]p$ is zero as follows.
Recall that a ring $R$ is called \emph{stably finite} if $M\oplus R^n\cong R^n$ always implies that $M$ is zero; see~\cite[Section~1B]{Lam}.
Kaplansky showed that, if $\Bbbk$ is a field of characteristic~$0$, then any group ring~$\Bbbk[G]$ is stably finite; compare~\cite{Montgomery}.
\end{proof}



\pagebreak

\subsection{h-Cobordisms}
\label{subsec:h-cob}

Recall that a smooth cobordism over a closed $n$-dimensional smooth manifold~$M$ consists of another closed $n$-dimensional smooth manifold~$N$ and an $(n+1)$-dimensional compact smooth manifold~$W$ with boundary~$\partial W$ together with a diffeomorphism $\MOR[\cong]{(f,g)}{M\amalg N}{\partial W}$.
This is called an $h$-cobordism if both $\incl\circ f$ and~$\incl\circ g$ are homotopy equivalences, where $\incl$ denotes the inclusion of~$\partial W$ in~$W$.
Two cobordisms $W$ and~$W'$ over~$M$ are called isomorphic if there exists a diffeomorphism $\MOR[\cong]{F}{W}{W'}$ such that $F_{|\partial W}\circ f=f'$.
A cobordism over~$M$ is called trivial if it is isomorphic to the cylinder~$M\times[0,1]$ (and this in particular implies that $M$ and~$N$ are diffeomorphic).

\begin{conjecture}[trivial $h$-cobordisms]
\label{conj:trivial-h-cobordisms}
\index{conjecture!trivial h-cobordisms@trivial $h$-cobordisms}
Let $M$ be a closed, connected, smooth manifold of dimension at least~$5$ and with torsion-free fundamental group.
Then every $h$-cobordism over~$M$ is trivial.
\end{conjecture}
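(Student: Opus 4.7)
The plan is to reduce the geometric statement to a purely algebraic vanishing result via the $s$-cobordism theorem of Barden, Mazur, and Stallings. Given a closed connected smooth manifold $M$ of dimension at least~$5$, that theorem associates to every $h$-cobordism $W$ over~$M$ its Whitehead torsion $\tau(W,M)\in\Wh(\pi_1 M)$, and asserts that the assignment $W\mapsto\tau(W,M)$ sets up a bijection between the isomorphism classes of $h$-cobordisms over~$M$ and the Whitehead group $\Wh(\pi_1 M)$. Crucially, $W$ is trivial if and only if $\tau(W,M)=0$. So the conjecture is equivalent to the purely algebraic statement that $\Wh(G)=0$ whenever $G$ is torsion-free.

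Recall that $\Wh(G)$ is defined as the quotient $K_1(\IZ[G])/(\pm G)$, where $\pm G$ denotes the image of the units $\{\pm g\mid g\in G\}$ in $K_1(\IZ[G])$. Since $K_1(\IZ)=\{\pm 1\}$, it suffices to show that for torsion-free~$G$ the natural map $K_1(\IZ)\to K_1(\IZ[G])$ induced by the inclusion $\IZ\hookrightarrow\IZ[G]$ is an isomorphism: then $K_1(\IZ[G])$ is generated by $\pm 1\in\pm G$, and the quotient vanishes. This is precisely the $K_1$-analog of the second half of Conjecture~\ref{conj:FJ-K_0-regular} applied to the regular ring $R=\IZ$, and it is a consequence of the Farrell-Jones Conjecture to be formulated in Subsection~\ref{subsec:FJ}.

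The main obstacle is therefore concentrated entirely in the algebraic input: the $s$-cobordism reduction is classical and essentially formal, but the vanishing $\Wh(G)=0$ for arbitrary torsion-free~$G$ is itself a wide-open problem, and indeed one of the central motivations for the Farrell-Jones Conjecture. Accordingly, all substantive work is packaged into proving the relevant case of the conjecture for $K_1(\IZ[G])$, which is where the controlled-algebra techniques of Section~\ref{sec:controlled} and the trace-method techniques of Section~\ref{sec:trace} come into play.
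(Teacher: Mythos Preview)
Your reduction via the $s$-cobordism theorem is exactly what the paper does: this is the content of Proposition~\ref{6equivalent8}, which says that Conjecture~\ref{conj:trivial-h-cobordisms} for~$M$ is equivalent to Conjecture~\ref{conj:Wh(tor-free)=0} for~$G=\pi_1(M)$. Since this is a conjecture, neither you nor the paper proves it; the point is only to identify the algebraic input.

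However, your algebraic reformulation contains a genuine error. You write that it suffices to show that the map $K_1(\IZ)\to K_1(\IZ[G])$ induced by the ring inclusion is an isomorphism. This is too strong and is false already for $G=\IZ$: the short exact sequence~\eqref{eq:ses-Wh} gives
\[
0\TO\{\pm1\}\oplus G_{ab}\TO K_1(\IZ[G])\TO\wh(G)\TO0,
\]
so whenever $G_{ab}\neq0$ the group $K_1(\IZ[G])$ is strictly larger than the image $\{\pm1\}$ of~$K_1(\IZ)$, even though $\wh(G)$ may well vanish. The correct statement is that $\wh(G)=0$ is equivalent to the surjectivity of $\{\pm1\}\oplus G_{ab}\to K_1(\IZ[G])$, which is the map induced on~$\pi_1$ by the \emph{classical assembly map} $BG_+\sma\K(\IZ)\to\K(\IZ[G])$, not by the ring inclusion alone; see~\eqref{eq:H_1(BG;KZ)} and the discussion following it. Your claim that this is ``precisely the $K_1$-analog of the second half of Conjecture~\ref{conj:FJ-K_0-regular}'' is therefore wrong: the naive $K_1$-analog of that conjecture would assert $K_1(\IZ)\cong K_1(\IZ[G])$, which fails. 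The correct replacement is the Farrell-Jones Conjecture~\ref{conj:FJ-tor-free-regular}, which on~$\pi_1$ yields exactly the assembly isomorphism needed.
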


Surprisingly, this conjecture can be reinterpreted in terms of algebraic $K$-theory.
In fact, the celebrated $s$-Cobordism Theorem of Stephen Smale, Barry Mazur, John Stallings, and Dennis Barden (e.g., see~\cite{Milnor-h-cobordism}, \cite{Kreck-Lueck-Novikov}), states that there is a bijection
\[
\{\,\text{$h$-cobordisms over~$M$}\,\}/\text{iso}\, \cong \wh(\pi_1(M))
\]
between the set of isomorphism classes of smooth cobordisms over~$M$ and the Whitehead group~$\wh(\pi_1(M))$ of the fundamental group of~$M$, whose definition we now review.

Recall that, given a ring~$R$, invertible matrices with coefficients in~$R$ represent classes in~$K_1(R)$.
Given any group~$G$, the elements $\pm g\in\IZ[G]$ are invertible for any~$g\in G$, and hence represent elements in $K_1(\IZ[G])$.
By definition the Whitehead group $\wh(G)$ is the quotient of $K_1( \IZ [G] )$ by the image of the map that sends $(\pm1,g)$ to the element represented by $\pm g$ in $K_1(\IZ[G])$.
This map factors over $\{\pm1\}\oplus G_{ab}$, where $G_{ab}$ is the abelianization of~$G$, and
the induced map $\{\pm1\}\oplus G_{ab}\TO K_1(\IZ[G])$ is in fact injective; see for example \cite[Lemma~2]{LR-survey}. 
So there is a short exact sequence
\begin{equation}
\label{eq:ses-Wh}
0 \TO \{ \pm 1 \} \oplus G_{ab} \TO K_1( \IZ [G] ) \TO \wh(G) \TO 0
\,.
\end{equation}

For Whitehead groups there is the following well-known folklore conjecture.
The cases of the infinite cyclic group~\cite{Higman1940}, of finitely generated free abelian groups~\cite{BHS}, and of finitely generated free groups~\cite{Stallings} provided early evidence for this conjecture.

\begin{conjecture}
\label{conj:Wh(tor-free)=0}
If $G$ is a torsion-free group, then $\wh(G)=0$.
\end{conjecture}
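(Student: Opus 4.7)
The plan is to derive this as a consequence of the Farrell--Jones Conjecture (to be formulated at the spectrum level in Subsection~\ref{subsec:FJ}), of which Conjecture~\ref{conj:FJ-K_0-regular} is the $K_0$-shadow. Once one accepts that the Farrell--Jones assembly map
\[
H_\ast^G(E_{\VCyc}G;\mathbf{K}_{\IZ}) \TO K_\ast(\IZ[G])
\]
is an isomorphism, the goal reduces to identifying $K_1(\IZ[G])$ with $\{\pm 1\}\oplus G_{ab}$; combined with the short exact sequence~\eqref{eq:ses-Wh} this immediately gives $\wh(G)=0$.

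First I would exploit that $G$ is torsion-free, so every finite subgroup is trivial and the only nontrivial virtually cyclic subgroups are infinite cyclic. A transitivity argument, together with the Bass--Heller--Swan decomposition of $K_\ast$ on $\IZ$-factors and the classical vanishing $\wh(\IZ)=0$ from~\cite{Higman1940}, allows one to replace the family $\VCyc$ by the trivial family, reducing the source of the assembly map to the ordinary generalized homology $H_\ast(BG;\mathbf{K}(\IZ))$.

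Second I would compute $H_1(BG;\mathbf{K}(\IZ))$ via the Atiyah--Hirzebruch spectral sequence $E^2_{p,q} = H_p(BG; K_q(\IZ)) \Rightarrow H_{p+q}(BG;\mathbf{K}(\IZ))$. In total degree~$1$, only the two entries $E^2_{0,1} = K_1(\IZ) = \{\pm 1\}$ and $E^2_{1,0} = H_1(BG;\IZ) = G_{ab}$ contribute, and there is no room for differentials in or out of either position. Naturality then identifies the composition
\[
\{\pm 1\}\oplus G_{ab} \TO H_1(BG;\mathbf{K}(\IZ)) \TO[\cong] K_1(\IZ[G])
\]
with the injection in~\eqref{eq:ses-Wh}, forcing both maps to be isomorphisms and hence $\wh(G)=0$.

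The main obstacle, unsurprisingly, is the first input: the Farrell--Jones Conjecture is open for arbitrary torsion-free groups, and in fact Conjecture~\ref{conj:Wh(tor-free)=0} is essentially equivalent to its $K_1$-consequence at $R=\IZ$. The cases cited above (infinite cyclic, finitely generated free abelian, finitely generated free) are precisely those where the relevant assembly statement has been established by direct means, and the remainder of the paper is devoted to the controlled-algebra and trace-method techniques developed to attack the general case.
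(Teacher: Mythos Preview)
Your derivation is exactly the one the paper gives: reduce from $\VCyc$ to the trivial family via transitivity (Proposition~\ref{prop:FJregtfFJ}), compute $\pi_1\bigl(BG_+\sma\K(\IZ)\bigr)\cong\{\pm1\}\oplus G_{ab}$ with the Atiyah--Hirzebruch spectral sequence (first-quadrant since $\IZ$ is regular), and identify the classical assembly map on~$\pi_1$ with the left-hand map in~\eqref{eq:ses-Wh}. The only point the paper adds is a reference for that last identification (Waldhausen~\cite[Assertion~15.8]{WaldhausenII}), which is not quite a formal consequence of ``naturality'' alone; and you correctly flag that the whole argument is conditional on the Farrell--Jones Conjecture.
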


By the $s$-Cobordism Theorem recalled above, the connection between the last two conjectures is as follows.

\begin{proposition} \label{6equivalent8}
Let $M$ be a closed, connected, smooth manifold of dimension at least~$5$ and with torsion-free fundamental group.
Then Conjecture~\ref{conj:trivial-h-cobordisms} for~$M$ is equivalent to Conjecture~\ref{conj:Wh(tor-free)=0} for~$G=\pi_1(M)$.
\end{proposition}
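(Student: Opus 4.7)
The proof should essentially be an unwinding of the $s$-Cobordism Theorem bijection that was already recalled just above the conjectures, so the plan is short.

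First, I would invoke the $s$-Cobordism Theorem of Smale-Mazur-Stallings-Barden for the manifold~$M$, which is applicable precisely because $\dim M \geq 5$. This provides the bijection
\[
\{\,\text{$h$-cobordisms over~$M$}\,\}/\text{iso} \;\cong\; \wh(\pi_1(M))
\]
already stated in the excerpt. The next step, which is the only point requiring a bit of care, is to observe that this bijection is in fact a bijection of \emph{pointed} sets: the class of the trivial $h$-cobordism $M\times[0,1]$ corresponds to $0\in\wh(\pi_1(M))$, because the Whitehead torsion of the inclusion $M\hookrightarrow M\times[0,1]$ vanishes (it is a simple homotopy equivalence). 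I would cite this basepoint compatibility from the same references \cite{Milnor-h-cobordism} or \cite{Kreck-Lueck-Novikov} that provide the $s$-Cobordism Theorem itself.

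Given this, both implications are immediate. If Conjecture~\ref{conj:trivial-h-cobordisms} holds for~$M$, then the set on the left has a single element, hence $\wh(\pi_1(M))$ is the trivial group, which is Conjecture~\ref{conj:Wh(tor-free)=0} for~$G=\pi_1(M)$. Conversely, if $\wh(\pi_1(M))=0$, then the set of isomorphism classes of $h$-cobordisms over~$M$ is a singleton; since the trivial $h$-cobordism is one such class, every $h$-cobordism over~$M$ must be isomorphic to it.

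The hypothesis that $\pi_1(M)$ is torsion-free plays no role in the equivalence itself (the $s$-Cobordism Theorem works for arbitrary fundamental group); it is present only because Conjecture~\ref{conj:Wh(tor-free)=0} is formulated only for torsion-free groups, so this is exactly the setting in which both statements are meaningful. The ``main obstacle'' is really no obstacle at all for this proposition: all the genuine work is already packaged inside the $s$-Cobordism Theorem, and the proposition is just a translation of that theorem into the language of the two conjectures.
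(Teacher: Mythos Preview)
Your proposal is correct and matches the paper's approach exactly. In fact, the paper does not give an explicit proof at all: it simply prefaces the proposition with ``By the $s$-Cobordism Theorem recalled above, the connection between the last two conjectures is as follows,'' treating the equivalence as an immediate consequence of the bijection---which is precisely what you have spelled out.
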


For groups with torsion, the situation is much more complicated.
For example, if $C_n$ is a finite cyclic group of order $n\not\in\{1,2,3,4,6\}$, then $\wh(C_n)\ne0$, and in fact even $\wh(C_n)\tensor_\IZ\IQ\ne0$.
The analog of Conjecture~\ref{conj:Wh(tor-free)=0} for arbitrary groups is the following.

\begin{conjecture}
\label{conj:Wh-inj}
For any group~$G$ the map
\begin{equation}
\label{eq:Wh}
\colim_{H\in\obj\Sub G(\Fin)}
\wh(H)\tensor_\IZ\IQ
\TO
\wh(G)\tensor_\IZ\IQ
\end{equation}
is injective.
\end{conjecture}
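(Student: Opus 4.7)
The plan is to derive this conjecture from the Farrell-Jones Conjecture for~$\IZ[G]$, which will be formulated in Subsection~\ref{subsec:FJ}. Farrell-Jones predicts that a certain assembly map
\[
H^G_*(E_\VCyc G;\K_\IZ)\TO K_*(\IZ[G])
\]
is an isomorphism. Restricting from virtually cyclic to finite subgroups yields a canonical factorization through $H^G_*(E_\Fin G;\K_\IZ)$, and a standard Bass-Heller-Swan type computation shows that the relative map
\[
H^G_*(E_\Fin G;\K_\IZ)\tensor_\IZ\IQ\TO H^G_*(E_\VCyc G;\K_\IZ)\tensor_\IZ\IQ
\]
is split injective: rationally, the extra contribution from infinite virtually cyclic subgroups reduces to Nil-groups and $K_*$ of twisted Laurent extensions of integral group rings of finite groups, all of which vanish after tensoring with~$\IQ$. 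Granting Farrell-Jones, the assembly map for the family~$\Fin$ is therefore rationally split injective on~$K_1$.

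Next I would identify the colimit in~\eqref{eq:Wh} with a canonical summand of $H^G_1(E_\Fin G;\K_\IZ)\tensor_\IZ\IQ$. The equivariant Atiyah-Hirzebruch (Bredon) spectral sequence has on its $E^2$-page the Bredon homology of $\Or G(\Fin)$ with coefficients in $G/H\longmapsto K_*(\IZ[H])\tensor_\IZ\IQ$, and the crucial observation is that for finite~$H$ the subgroup $\{\pm 1\}\oplus H_{ab}$ appearing in~\eqref{eq:ses-Wh} is itself finite, so it vanishes after tensoring with~$\IQ$ and $K_1(\IZ[H])\tensor_\IZ\IQ\cong\wh(H)\tensor_\IZ\IQ$. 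Since inner conjugation acts trivially on~$K_1$, as noted after Conjecture~\ref{conj:FJ-K_0-regular}, these coefficients are a well-defined functor on $\Sub G(\Fin)$, and the zeroth column of the spectral sequence in total degree~$1$ is exactly $\colim_{H\in\obj\Sub G(\Fin)}\wh(H)\tensor_\IZ\IQ$. The canonical map into the abutment, composed with the rationalized assembly and with the quotient $K_1(\IZ[G])\tensor_\IZ\IQ\TO\wh(G)\tensor_\IZ\IQ$, realizes the map~\eqref{eq:Wh}; and since the image of $\colim_H K_1(\IZ[H])\tensor_\IZ\IQ$ is supported on the $\wh(G)\tensor_\IZ\IQ$ summand of $K_1(\IZ[G])\tensor_\IZ\IQ$ by naturality of the splitting of~\eqref{eq:ses-Wh}, the rational split injectivity from the first step forces~\eqref{eq:Wh} to be injective.

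The main obstacle is of course the Farrell-Jones Conjecture itself, which is the paper's central open problem. Modulo that, the only substantive technical input is the rational splitting of the relative assembly map from~$\Fin$ to~$\VCyc$; this is the rational content of the transitivity principle combined with the vanishing of $NK_n(\IZ[F])\tensor_\IZ\IQ$ for finite~$F$. Neither ingredient is trivial, but both fit squarely into the controlled-algebra and trace-methods framework developed later in the paper, so once Farrell-Jones is established the conjecture follows by the diagram chase sketched above.
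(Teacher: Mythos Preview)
This statement is a \emph{conjecture}, open in general; the paper does not prove it outright. What the paper does is explain, right after Conjecture~\ref{conj:Wh-iso}, that it follows from the Farrell--Jones Conjecture via the stronger Conjecture~\ref{conj:Wh-iso}: L\"uck's equivariant Chern character~\eqref{eq:Lueck-Grunewald} identifies
\[
\colim_{H\in\obj\Sub G(\Fin)}\wh(H)\tensor_\IZ\IQ
\;\cong\;
\bigoplus_{(C)\in(\FCyc)}\IQ\tensor_{\IQ[W_GC]}\Theta_C\bigl(\wh(C)\tensor_\IZ\IQ\bigr)
\]
as a direct summand of the source of the rationalized Farrell--Jones assembly, and the map~\eqref{eq:Wh} is the restriction of that assembly to this summand. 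Separately, Theorem~\ref{thm:Wh} gives an \emph{unconditional} proof under homological finiteness hypotheses on centralizers, using trace methods rather than controlled algebra.

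Your derivation from Farrell--Jones is in the same spirit, but you work with the equivariant Atiyah--Hirzebruch spectral sequence directly instead of the Chern character, and this creates a gap. You place the colimit at $E^2_{0,1}$ and speak of ``the canonical map into the abutment'', but the column $s=0$ can receive differentials (here $d_2\colon E^2_{2,0}\to E^2_{0,1}$, and there is also a $t=-1$ row since $K_{-1}(\IZ[H])$ need not vanish for finite~$H$), so $E^\infty_{0,1}$ is only a \emph{quotient} of $E^2_{0,1}$, and the composite $E^2_{0,1}\to\pi_1$ is not injective on the nose. What saves you is precisely that the spectral sequence degenerates rationally---but that is the content of L\"uck's Chern character, which is what the paper invokes. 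So your route and the paper's are not really different: you need the same ingredient, you just have not named it. Once you do, your argument is correct and essentially coincides with the paper's reduction to Conjecture~\ref{conj:Wh-iso}; the Chern character is doing the work that your spectral-sequence sketch leaves implicit.
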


We highlight two differences with the corresponding Conjecture~\ref{conj:FJ-K_0-regular} for~$K_0$.
First, Conjecture~\ref{conj:Wh-inj} is only a rational statement, i.e., after applying~$-\tensor_\IZ\IQ$.
Second, it is only an injectivity statement.
In order to obtain a rational isomorphism conjecture for~$\wh(G)$ one needs to enlarge the source of the map~\eqref{eq:Wh}.
This requires some additional explanations and is postponed to Conjecture~\ref{conj:Wh-iso} below.


\subsection{Assembly maps}
\label{subsec:assembly}

The Farrell-Jones Conjecture, which we formulate in the next subsection, generalizes Conjectures~\ref{conj:FJ-K_0-regular}, \ref{conj:Wh(tor-free)=0}, and~\ref{conj:Wh-inj} from statements about the abelian groups $K_0$ and~$\wh$ to statements about the non-connective algebraic $K$-theory spectra~$\K(R[G])$ of group rings, for arbitrary coefficient rings and arbitrary groups.
In order to formulate the Farrell-Jones Conjecture, we need to first introduce the fundamental concept of assembly maps.

Fixing a ring~$R$, algebraic $K$-theory defines a functor~$\K(R[-])$ from groups to spectra.
In fact, it is very easy to promote this to a functor
\[
\MOR{\K(R[-])}{\Groupoids}{\Sp}
\]
from the category of small groupoids (i.e., small categories whose morphisms are all isomorphisms) to the category of spectra.
Moreover, this functor preserves equivalences, in the sense that it sends equivalences of groupoids to $\pi_*$-isomorphisms (i.e., weak equivalences) of spectra.
For any such functor we now proceed to construct assembly maps, following the approach of~\cite{Davis-L}.
It is not enough to work in the stable homotopy category of spectra, but any point-set level model would work.

Let
\(
\MOR{\T}{\Groupoids}{\Sp}
\)
be a functor that preserves equivalences.
Given a group~$G$, consider the functor~$\MOR{\oid{G}{-}}{\Sets^G}{\Groupoids}$ that sends a $G$-set~$S$ to its action groupoid~$\oid{G}{S}$, with $\obj\oid{G}{S}=S$ and \(\mor_{\oid{G}{S}}(s,s')=\SET{g\in G}{gs=s'}\).
Restricting to the orbit category~$\Or G$, i.e., the full subcategory of~$\Sets^G$ with objects~$G/H$ as $H$ varies among the subgroups of~$G$, we obtain the horizontal composition in the following diagram.
\[
\begin{tikzcd}[cramped]
\Or G
\arrow[d, hook, "\iota"']
\arrow[r, hook]
&
\Sets^G
\arrow[r, "\oid{G}{-}"]
&
\Groupoids
\arrow[r, "\T"]
&
\Sp \\
\Top^G
\arrow[urrr, bend right=10, dotted, "\Lan_\iota\T(\oid{G}{-})" description]
& & &
\end{tikzcd}
\]
Now we take the left Kan extension \cite[Section~X.3]{MacLane}
of $\T(\oid{G}{-})$ along the full and faithful inclusion functor~$\iota\colon\Or{G}\hookrightarrow\Top^G$ of $\Or{G}$ into the category of all $G$-spaces.
The left Kan extension evaluated at a $G$-space $X$ can be constructed  as the coend \cite[Sections~IX.6 and~X.4]{MacLane}
\[
\bigl(\Lan_\iota\T(\oid{G}{-})\bigr)
(X)
=
\smash{X_+\sma_{\Or G}\T(\oid{G}{-})}
\]
of the functor
\begin{align*}
(\Or G)^\op\times\Or G
&\TO
\Sp
\,,
\\
(G/H,G/K)
&\longmapsto
\map(G/H,X)^G_+\sma\T(\oid{G}{\,(G/K)})\cong X^H_+\sma\T(\oid{G}{\,(G/K)})
\,.
\end{align*}
There are natural isomorphisms $G/H_+\sma_{\Or G}\T(\oid{G}{-})\cong\T(\oid{G}{\,(G/H)})$, and the fact that $\T$ preserves equivalences implies that these spectra are $\pi_*$-isomorphic to~$\T(H)$.
Notice that for $\pt=G/G$ we even have an isomorphism $\pt_+\sma_{\Or G}\T(\oid{G}{-})\cong\T(G)$.

To define the assembly map we apply this construction to the following $G$-spaces.
Consider a family~$\CF$ of subgroups of~$G$ (i.e., a collection of subgroups closed under passage to subgroups and conjugates)
and consider a universal $G$-space $EG(\CF)$.
This is a $G$-CW~complex characterized up to $G$-homotopy equivalence by the property that, for any subgroup~$H\le G$, the $H$-fixed point space
\[
\bigl(EG(\CF)\bigr)^H
\,
\text{ is }
\begin{cases}
\text{empty}&\text{if $H\not\in\CF$;}\\
\text{contractible}&\text{if $H\in\CF$.}
\end{cases}
\]
The \emph{assembly map}\index{assembly map} is by definition the map
\[
\MOR{\asbl_\CF}{EG(\CF)_+\sma_{\Or G}\T(\oid{G}{-})}{\T(G)}
\]
induced by the projection $EG(\CF)\TO\pt$ (where, in the target, we use the isomorphism $\pt_+\sma_{\Or G}\T(\oid{G}{-})\cong\T(G)$).

\begin{remark}
\leavevmode
\label{rem:source-assembly}
\begin{enumerate}[label=(\roman*)]
\item \label{rem-i:classical}
In the special case of the trivial family~$\CF=1$, a universal space~$EG(1)$ is by definition a free and non-equivariantly contractible $G$-CW~complex, i.e., the universal cover of a classifying space~$BG$.
In this case, there is an identification
\[EG(1)_+\sma_{\Or G}\T(\oid{G}{-})\cong BG_+\sma\T(1)\]
and therefore we obtain the so-called \emph{classical} assembly map\index{assembly map!classical}
\[
\MOR{\asbl_1}{BG_+\sma\T(1)}{\T(G)}
\,.
\]
\item \label{rem-i:rel-assembly}
Any $G$-CW~complex whose isotropy groups all lie in the family $\CF$ has a map to~$EG(\CF)$, and this map is unique up to $G$-homotopy.
This applies in particular to $EG(\CF')$ when $\CF'\subseteq\CF$, and we refer to the induced map
\[
\MOR{\asbl_{\CF' \subseteq \CF}}{EG(\CF')_+\sma_{\Or G}\K(R[\oid{G}{-}])}{EG(\CF)_+\sma_{\Or G}\K(R[\oid{G}{-}])}
\]
as the \emph{relative} assembly map\index{assembly map!relative}.
\item \label{rem-i:hocolim}
The source of the assembly map is a model for
\[
\hocolim_{\substack{G/H\in\obj\Or G\\\text{s.t. }H\in\CF}}\T(\oid{G}{\,(G/H)})
\,,
\]
the homotopy colimit of the restriction of~$\T(\oid{G}{-})$ to the full subcategory of~$\Or G$ of objects $G/H$ with $H\in\CF$; compare~\cite[Section~5.2]{Davis-L}.
\item \label{rem-i:AHSS}
Taking the homotopy groups of~$X_+\sma_{\Or G}\T(\oid{G}{-})$ defines a $G$-equivariant homology theory for $G$-CW complexes~$X$.
This is an equivariant generalization of the well-known statement that $\pi_*(X_+\sma\BE)$ gives a non-equivariant homology theory for any spectrum~$\BE$.
The Atiyah-Hirzebruch spectral sequence converging to~$\pi_{s+t}(X_+\sma\BE)$ with $E^2_{s,t}=H_s(X;\pi_t\BE)$ also generalizes to a spectral sequence converging to~$\pi_{s+t}(X_+\sma_{\Or G}\T(\oid{G}{-}))$ with
\[
E^2_{s,t}=H^G_s(X;\pi_t\BT(\oid{G}{-}))
\,,
\]
the Bredon homology of~$X$ with coefficients in $\MOR{\pi_t\BT(\oid{G}{-})}{\Or G}{\Ab}$; compare \cite[Theorem~4.7]{Davis-L}. 
Using this we see that, if $\asbl_{\CF}$ is a $\pi_*$-isomorphism, then in general all $\pi_t( \BT ( H ))$ with $H \in \CF$ and $- \infty < t \leq n$ contribute to $\pi_n( \BT ( G))$.
\end{enumerate}
\end{remark}

We conclude with a historical comment.
The classical assembly map $\asbl_1$ from Remark~\ref{rem:source-assembly}\ref{rem-i:classical} for algebraic $K$-theory was originally introduced in Jean-Louis Loday's thesis~\cite[Chapitre IV]{Loday} using pairings in algebraic $K$-theory and the multiplication map
\[
G \times GL(R) \TO GL( R[G] )
\,.
\]
Friedhelm Waldhausen \cite[Section~15]{WaldhausenII} characterized this map as a universal approximation by a homology theory evaluated on a classifying space. This point of view was nicely explained by Michael Weiss and Bruce Williams in~\cite{Weiss-Williams}.
In their original work~\cite{FJ-iso}, Farrell and Jones used the language developed by Frank Quinn~\cite[Appendix]{Quinn-EndsII}. Later, Jim Davis and Wolfgang L\"uck~\cite{Davis-L} gave an equivariant version of the point of view of~\cite{Weiss-Williams}, clarifying and unifying the underlying principles. Their approach leads to the concise description of the assembly map given above.
The different approaches are compared and shown to agree in~\cite{Hambleton-Pedersen}.


\subsection{The Farrell-Jones Conjecture}
\label{subsec:FJ}

We begin by formulating the Farrell-Jones Conjecture in the special case of torsion-free groups and regular rings.

\begin{conjecture-FJ}[special case]
\label{conj:FJ-tor-free-regular}
\index{conjecture!Farrell-Jones!special case of torsion-free groups and regular rings}
For any \emph{torsion-free} group~$G$ and for any \emph{regular} ring~$R$ the classical assembly map
\[
\MOR{\asbl_1}{BG_+\sma\K(R)}{\K(R[G])}
\]
is a $\pi_*$-isomorphism.
\end{conjecture-FJ}

On $\pi_0$ the classical assembly map produces the map $K_0(R)\TO K_0(R[G])$ induced by the inclusion $R\TO R[G]$.
So we see that the Farrell-Jones Conjecture~\ref{conj:FJ-tor-free-regular} implies the torsion-free case of Conjecture~\ref{conj:FJ-K_0-regular}.

On~$\pi_1$, in the special case when~$R=\IZ$, we have
\begin{equation}
\label{eq:H_1(BG;KZ)}
\pi_1\bigl(BG_+\sma\K(\IZ)\bigr)
\cong
H_0\bigl(BG;K_1(\IZ)\bigr)
\oplus
H_1\bigl(BG;K_0(\IZ)\bigr)
\cong
\{\pm1\}
\oplus
G_{ab}
\,.
\end{equation}
The first isomorphism comes from the Atiyah-Hirzebruch spectral sequence, which is concentrated in the first quadrant because regular rings have vanishing negative $K$-theory.
The second isomorphism comes from the 
computations $K_1(\IZ)\cong\{\pm1\}$ and~$K_0(\IZ)\cong\IZ$.
Under the isomorphism~\eqref{eq:H_1(BG;KZ)}, it can be shown \cite[Assertion~15.8]{WaldhausenII} that the classical assembly map produces on~$\pi_1$ the left-hand map in~\eqref{eq:ses-Wh}, whose cokernel is by definition the Whitehead group~$\wh(G)$.
So we see that the Farrell-Jones Conjecture~\ref{conj:FJ-tor-free-regular} implies Conjecture~\ref{conj:Wh(tor-free)=0}.

From these identifications and
computations of $K_0( \IZ [G] )$ and $Wh(G)$ for finite groups we see that  $\pi_0( \asbl_1)$ and $\pi_1( \asbl_1)$ may not be surjective for groups with torsion, even when $R = \IZ$.
The classical assembly map may also fail to be injective on homotopy groups if we drop the assumption torsion-free. This happens for example for $\pi_2( \asbl_1)$ if $R= \IF$ is a
finite field of characteristic prime to $2$ and $G$ is the non-cyclic group with  $4$ elements \cite{Ullmann-Wu}.

The regularity assumption cannot be dropped either.
For example, consider the case when~$G=C_\infty$ is the infinite cyclic group.
Then of course $BC_\infty=S^1$ and~$R[C_\infty]=R[t,t^{-1}]$, and it can be shown that on~$\pi_n$ the classical assembly map produces the left-hand map in the short exact sequence
\[
0\TO K_n(R)\oplus K_{n-1}(R)\TO K_n(R[t,t^{-1}])\TO NK_n(R)\oplus NK_n(R)\TO0
\]
given by the Fundamental Theorem of algebraic $K$-theory;
see for example~\cite{BHS} in low dimensions, \cite[Section~10]{Swan-higher}, and~\cite[Theorem~18.1]{WaldhausenII}.
Recall that the groups $NK_n(R)$ are defined as the cokernel of the split injection $K_n(R) \TO K_n(R[t])$ induced by the natural map $R \TO R[t]$.
It is known that $NK_n(R)=0$ for each~$n$ if $R$ is regular~\cite[Theorem~10.1(1) and 10.3]{Swan-higher}, but $NK_n (R)$ can be nontrivial for arbitrary rings.
So we see that the classical assembly map for the infinite cyclic group is a $\pi_*$-isomorphism if the ring~$R$ is regular, but otherwise it may fail to be surjective on homotopy groups.

For arbitrary groups and rings, the generalization of Conjecture~\ref{conj:FJ-tor-free-regular} is the following.

\begin{conjecture-FJ}
\label{conj:FJ}
\index{conjecture!Farrell-Jones}
For any group~$G$ and for any ring~$R$ the Farrell-Jones assembly map
\[
\MOR{\asbl_\VCyc}{EG(\VCyc)_+\sma_{\Or G}\K(R[\oid{G}{-}])}{\K(R[G])}
\]
is a $\pi_*$-isomorphism.
\end{conjecture-FJ}
Here $\VCyc$ denotes the family of virtually cyclic subgroups of $G$. A group is called virtually cyclic if it contains a cyclic subgroup of finite index.

The Farrell-Jones Conjectures~\ref{conj:FJ-tor-free-regular} and~\ref{conj:FJ} are related as follows.

\begin{proposition} \label{prop:FJregtfFJ}
If $G$ is a torsion-free group and $R$ is a regular ring, then the Farrell-Jones Conjectures \ref{conj:FJ-tor-free-regular} and~\ref{conj:FJ} are equivalent.
\end{proposition}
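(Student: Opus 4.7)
The plan is to show that the classical assembly map and the Farrell-Jones assembly map differ by a factor that becomes an equivalence precisely when~$G$ is torsion-free and~$R$ is regular.  Concretely, the relative assembly map from Remark~\ref{rem:source-assembly}\ref{rem-i:rel-assembly} for the inclusion of families $1\subseteq\VCyc$ fits into a commutative triangle of spectra
\[
\begin{tikzcd}[column sep=small]
BG_+\sma\K(R)
\arrow[rr, "\asbl_1"]
\arrow[dr, "\asbl_{1\subseteq\VCyc}"']
& & \K(R[G])
\\
& EG(\VCyc)_+\sma_{\Or G}\K(R[\oid{G}{-}])
\arrow[ur, "\asbl_\VCyc"']
&
\end{tikzcd}
\]
in which the top horizontal arrow is identified with the classical assembly map by way of the isomorphism $EG(1)_+\sma_{\Or G}\K(R[\oid{G}{-}])\cong BG_+\sma\K(R)$ from Remark~\ref{rem:source-assembly}\ref{rem-i:classical}.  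By two-out-of-three for $\pi_*$-isomorphisms, it therefore suffices to show that $\asbl_{1\subseteq\VCyc}$ is itself a $\pi_*$-iso.

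To handle $\asbl_{1\subseteq\VCyc}$ I would invoke the Transitivity Principle, a standard reduction for assembly maps with respect to nested families of subgroups.  It reduces the task to showing that for every $H\in\VCyc$ the classical assembly map $\MOR{\asbl_1^H}{BH_+\sma\K(R)}{\K(R[H])}$ is a $\pi_*$-isomorphism.

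Here is where both hypotheses enter.  Because $G$ is torsion-free, every $H\in\VCyc$ is itself torsion-free and virtually cyclic, hence either trivial or isomorphic to the infinite cyclic group~$C_\infty$.  The trivial case is immediate.  For $H\cong C_\infty$ the claim is precisely the one discussed right after Conjecture~\ref{conj:FJ-tor-free-regular}: the Fundamental Theorem of algebraic $K$-theory identifies the obstruction on~$\pi_n$ with $NK_n(R)\oplus NK_n(R)$, which vanishes because $R$ is regular.

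The main obstacle is really just the classification of torsion-free virtually cyclic groups as either trivial or~$C_\infty$, together with a careful invocation of the Transitivity Principle.  Once those two points are granted, the $K$-theoretic computations already recorded in the text for~$R[t,t^{-1}]$ do the rest; no further input is needed.
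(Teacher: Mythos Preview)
Your proof is correct and follows essentially the same approach as the paper: both arguments invoke the Transitivity Principle for the inclusion $1\subseteq\VCyc$, reduce to the classical assembly map for each virtually cyclic subgroup~$H$, observe that torsion-free virtually cyclic groups are trivial or infinite cyclic, and finish with the Fundamental Theorem and the vanishing of~$NK_*(R)$ for regular~$R$. The only cosmetic difference is that the paper states the Transitivity Principle in a form that already packages the commutative triangle and the two-out-of-three reduction you spell out explicitly.
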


\begin{proof}
This is an application of the following principle, which is proved in~\cite[Theorem~65]{LR-survey}. 

\begin{transitivity}
\label{transitivity}
\index{assembly map!transitivity principle}
Let $\CF$ and $\CF'$ be families of subgroups of $G$ with $\CF \subseteq \CF'$. Assume that for each $H \in \CF'$ the assembly map
 \[
EH(\CF|_{H})_+\sma_{\Or H}\K(R[\oid{H}{-}])
\TO
\K(R[H])
\]
is a $\pi_*$-isomorphism, where $\CF|_H=\SET{K\leq H}{K\in\CF}$.
Then the relative assembly map explained in Remark~\ref{rem:source-assembly}\ref{rem-i:rel-assembly}, i.e., the left vertical map in the following commutative triangle, is a $\pi_*$-isomorphism.
\[
\begin{tikzcd}[row sep=small]
\ds EG(\CF)_+\sma_{\Or G}\K(R[\oid{G}{-}])
\arrow[drr,"\asbl_{\CF}" pos=.25, shorten <=-.7em]
\arrow[dd,"\asbl_{\CF\subseteq\CF'}"']
& &
\\
& &
\K (R[G])
\\
\ds EG(\CF')_+\sma_{\Or G}\K(R[\oid{G}{-}])
\arrow[urr,"\asbl_{\CF'}"' pos=.4, shorten <=.5em]
& &
\end{tikzcd}
\]
Therefore,
$\asbl_{\CF}$ is a $\pi_*$-isomorphism if and only if $\asbl_{\CF'}$ is a $\pi_*$-isomorphism.
\end{transitivity}

We now apply the transitivity principle in the case $\CF=1$ and $\CF' = \VCyc$.
Any nontrivial torsion-free virtually cyclic group is infinite cyclic.
Recall that $\asbl_1$ can be identified with the classical assembly map in Conjecture~\ref{conj:FJ-tor-free-regular}.
So it is enough to show that the classical assembly map is a $\pi_*$-isomorphism for the infinite cyclic group $C$.
The fact that this is true in the case of regular rings is explained above, before the statement of Conjecture~\ref{conj:FJ}.
\end{proof}

The next result shows, as promised, that the Farrell-Jones Conjecture implies all the other conjectures introduced in the first two subsections; the case of Conjecture~\ref{conj:Wh-inj} is considered right after Conjecture~\ref{conj:Wh-iso} below.

\begin{proposition}
The Farrell-Jones Conjecture~\ref{conj:FJ} implies Conjectures \ref{conj:FJ-K_0-regular} and \ref{conj:Wh(tor-free)=0}, and so also Conjectures \ref{conj:Kaplansky} and~\ref{conj:trivial-h-cobordisms} by Propositions \ref{4implies1} and~\ref{6equivalent8}.
\end{proposition}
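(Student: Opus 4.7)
My plan is to deduce each of the two conjectures by analyzing a specific homotopy group of the source of an appropriate assembly map, after first reducing the family of subgroups on the source side. The $\pi_1$ computation for the Whitehead statement follows almost immediately from Proposition~\ref{prop:FJregtfFJ}, whereas the $K_0$ statement requires an additional reduction from~$\VCyc$ to~$\Fin$.

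For Conjecture~\ref{conj:Wh(tor-free)=0}, I would specialize to $R=\IZ$ and apply Proposition~\ref{prop:FJregtfFJ}: since $G$ is torsion-free and $\IZ$ is regular, Conjecture~\ref{conj:FJ} is equivalent to the classical assembly map $\asbl_1\colon BG_+\sma\K(\IZ)\TO\K(\IZ[G])$ being a $\pi_*$-isomorphism. On~$\pi_1$ the source is identified in~\eqref{eq:H_1(BG;KZ)} with $\{\pm1\}\oplus G_{ab}$, and by construction the map $\pi_1(\asbl_1)$ is the left-hand map in the short exact sequence~\eqref{eq:ses-Wh} (cf.~\cite[Assertion~15.8]{WaldhausenII}). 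Surjectivity of $\pi_1(\asbl_1)$ therefore forces the cokernel $\wh(G)$ to vanish.

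For Conjecture~\ref{conj:FJ-K_0-regular}, my plan has two steps: first reduce the family from~$\VCyc$ to~$\Fin$, then inspect $\pi_0$ of the source. For the reduction I would apply the Transitivity Principle with $\CF=\Fin$ and $\CF'=\VCyc$: this reduces the claim that $\asbl_{\Fin\subseteq\VCyc}$ is a $\pi_*$-isomorphism to verifying the $\Fin$-assembly for every virtually cyclic~$H\le G$. The finite case is automatic, and for infinite virtually cyclic groups it is a known consequence of a Bass-Heller-Swan-type decomposition together with the vanishing of the associated Nil-summands under our regularity and invertibility hypotheses---the torsion-free case of the infinite cyclic group is the computation recalled before Conjecture~\ref{conj:FJ}. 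Combined with Conjecture~\ref{conj:FJ}, this yields that $\asbl_\Fin$ is itself a $\pi_*$-isomorphism. Next I would compute $\pi_0$ of the source via the equivariant Atiyah-Hirzebruch spectral sequence from Remark~\ref{rem:source-assembly}\ref{rem-i:AHSS}: for $H\in\Fin$ with $|H|$ invertible in the regular ring $R$, the group ring $R[H]$ is itself regular, so $\pi_t\K(R[H])=0$ for $t<0$, the $E^2$-page is concentrated in non-negative~$t$, and the edge homomorphism gives
\[
\pi_0\bigl(EG(\Fin)_+\sma_{\Or G}\K(R[\oid{G}{-}])\bigr)
\cong
H^G_0\bigl(EG(\Fin);\,K_0(R[\oid{G}{-}])\bigr)
\cong
\colim_{H\in\obj\Sub G(\Fin)}K_0(R[H])
\,,
\]
the last isomorphism using that $K_0(R[-])$ is invariant under inner automorphisms so the colimits over $\Or G|_\Fin$ and over $\Sub G(\Fin)$ agree. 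Under this identification $\pi_0(\asbl_\Fin)$ is precisely the map appearing in Conjecture~\ref{conj:FJ-K_0-regular}; the torsion-free special case then follows because $\Sub G(\Fin)$ has the trivial subgroup as initial object. The remaining implications to Conjectures~\ref{conj:Kaplansky} and~\ref{conj:trivial-h-cobordisms} are already supplied by Propositions~\ref{4implies1} and~\ref{6equivalent8}.

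The main obstacle I anticipate is the reduction from~$\VCyc$ to~$\Fin$: while the Transitivity Principle formally localizes the question to virtually cyclic subgroups, establishing the $\Fin$-assembly for every \emph{infinite} virtually cyclic~$H$ requires a careful Bass-Heller-Swan-type analysis of $K_*(R[H])$ and the vanishing of the relevant Nil-terms under the regularity and invertibility assumptions---a standard but nontrivial ingredient that I would invoke rather than reprove.
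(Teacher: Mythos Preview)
Your proposal is correct and follows essentially the same route as the paper: reduce from $\VCyc$ to $\Fin$, then identify $\pi_0$ of the source via the equivariant Atiyah-Hirzebruch spectral sequence (using that $R[H]$ is regular for finite $H$, hence has vanishing negative $K$-theory), and finally pass from the colimit over $\Or G(\Fin)$ to one over $\Sub G(\Fin)$ using invariance under inner automorphisms. The only real difference is that the paper does not redo the $\VCyc\to\Fin$ reduction via the Transitivity Principle and a Nil-term analysis for infinite virtually cyclic subgroups; instead it simply invokes Theorem~\ref{thm:Fin-to-VCyc}\ref{i:field-char0} (cited from~\cite[Proposition~70]{LR-survey}) as a black box, which is exactly the ``standard but nontrivial ingredient'' you flagged as your main obstacle.
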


\begin{proof}
The case of Conjecture~\ref{conj:Wh(tor-free)=0} and the torsionfree case of Conjecture~\ref{conj:FJ-K_0-regular} is explained above, directly after the statement of Conjecture~\ref{conj:FJ-tor-free-regular}.
The general case of Conjecture~\ref{conj:FJ-K_0-regular} follows from the following isomorphisms.
\begin{align*}
\pi_0 \left( EG(\VCyc)_+\sma_{\Or G} \K(R[\oid{G}{-}]) \right)
& \stackrel{{\ts\one}}{\cong}
\pi_0 \left( EG(\Fin)_+\sma_{\Or G} \K(R[\oid{G}{-}]) \right)
\\
& \stackrel{{\ts\two}}{\cong}
\pi_0 \left( EG(\Fin)_+\sma_{\Or G ( \Fin )} \K(R[\oid{G}{-}]) \right)
\\
& \stackrel{{\ts\three}}{\cong}
H_0 \left( C( EG(\Fin)) \tensor_{\IZ \Or G (\Fin)} K_0(R[\oid{G}{-}]) \right)
\\
& \stackrel{{\ts\four}}{\cong}
{\IZ} \tensor_{\IZ \Or G (\Fin)} K_0(R[\oid{G}{-}])
\\
& \stackrel{{\ts\five}}{\cong}
\colim_{\Or G (\Fin)} K_0(R[\oid{G}{-}])
\\
& \stackrel{{\ts\six}}{\cong}
\colim_{\Sub G (\Fin)} K_0(R[-])
\end{align*}
Theorem~\ref{thm:Fin-to-VCyc}\ref{i:field-char0} yields the isomorphism $\one$. Since
$EG( \Fin )^H = \emptyset$
if $H$ is not finite,
the isomorphism $\two$ follows immediately by inspecting the construction of the coend. The assumptions that $R$ is regular and that the order of every finite subgroup $H$ of~$G$ is invertible in~$R$ imply that also $R[H]$ is regular. For regular rings the negative $K$-groups vanish \cite[3.3.1]{Rosenberg}, 
and therefore the equivariant Atiyah-Hirzebruch spectral sequence explained in Remark~\ref{rem:source-assembly}\ref{rem-i:AHSS} is concentrated in the first quadrant.
This gives the isomorphism $\three$.
The singular or cellular chain complex $C( EG ( \Fin ))$, considered as a contravariant functor $G/H \longmapsto C( EG( \Fin )^H )$, resolves the constant functor~$\IZ$, therefore $\four$ follows from right exactness of~$-\tensor_{\IZ\Or G(\Fin)}M$ for any fixed~$\MOR{M}{\Or G(\Fin)}{\Ab}$.
The coend with the constant functor~$\IZ$  is one possible construction of the colimit in abelian groups, hence $\five$.
Since $K_n( R[\oid{G}{G/H}]) \cong K_n(R[H])$ and since inner automorphisms induce the identity on $K$-theory, the functor $K_n (R [ \oid{G} - ])$ factors over $\Or G (\Fin) \TO \Sub G ( \Fin)$, the functor sending $G/H \to G/K$, $gH \mapsto gaH$ to the class of $H \to K$, $h \mapsto a^{-1} h a$.
The isomorphism~$\six$ then follows by standard properties of colimits.
\end{proof}

The next result deals with the passage from finite to virtually cyclic subgroups in the source of the Farrell-Jones assembly map.

\begin{theorem}[finite to virtually cyclic]
\leavevmode
\label{thm:Fin-to-VCyc}
\index{conjecture!Farrell-Jones!from finite to virtually cyclic subgroups}
\begin{enumerate}[label=(\roman*)]
\item
\label{i:Bartels-domain}
The relative assembly map
\[
\MOR{\asbl_{\Fin\subseteq\VCyc}}%
{EG(\Fin )_+\sma_{\Or G}\K(R[\oid{G}{-}])}%
{EG(\VCyc)_+\sma_{\Or G}\K(R[\oid{G}{-}])}
\]
is always split injective.
\item
\label{i:field-char0}
If $R$ is regular and the order of every finite subgroup of $G$ is invertible in $R$, then $\asbl_{\Fin\subseteq\VCyc}$ is a $\pi_*$-isomorphism.
\item
\label{i:L-Steimle}
If $R$ is regular then $\asbl_{\Fin\subseteq\VCyc}$ is a $\pi_*^\IQ$-isomorphism, i.e., it induces isomorphisms on~$\pi_n(-)\tensor_\IZ\IQ$ for all~$n\in\IZ$.
\end{enumerate}
\end{theorem}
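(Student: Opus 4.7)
The overall strategy is to treat all three parts uniformly via the Transitivity Principle, thereby reducing them to a problem about virtually cyclic groups alone, and then to invoke the Bass-Heller-Swan / Waldhausen splittings to identify the discrepancy between the source and target of the assembly map as an explicit Nil summand. Applying the Transitivity Principle with $\CF = \Fin$ and $\CF' = \VCyc$, parts \ref{i:Bartels-domain}--\ref{i:L-Steimle} reduce to the corresponding statements about
\[
\asbl_{\Fin|_V}\colon EV(\Fin|_V)_+ \sma_{\Or V} \K(R[\oid{V}{-}]) \TO \K(R[V])
\]
for every virtually cyclic subgroup $V$ of $G$. Finite $V$ are trivial since $V \in \Fin|_V$, so one concentrates on infinite virtually cyclic $V$, which are classified into two types: type~I, $V \cong F \rtimes_\alpha C_\infty$ with $F$ finite, or type~II, $V \cong G_- *_F G_+$ with $F$ finite of index~$2$ in both~$G_\pm$.

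A one-dimensional model of $EV(\Fin|_V)$ exists in both cases --- the real line for type~I, the Bass-Serre tree for type~II --- so the equivariant Atiyah-Hirzebruch spectral sequence of Remark~\ref{rem:source-assembly}\ref{rem-i:AHSS} describes the source of $\asbl_{\Fin|_V}$ on homotopy groups by a Wang (type~I) or Mayer-Vietoris (type~II) sequence in the $K$-groups of $R[F]$ and (in type~II) of $R[G_\pm]$. The Bass-Heller-Swan Fundamental Theorem for twisted Laurent extensions, and Waldhausen's splitting theorem for generalized free products, describe the target $\K(R[V])$ by analogous sequences augmented by \emph{Nil summands}: $NK_n(R[F];\alpha) \oplus NK_n(R[F];\alpha^{-1})$ in type~I, and a Waldhausen $\widetilde{\text{Nil}}$-group in type~II. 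Comparing the two sides shows that $\asbl_{\Fin|_V}$ is, on homotopy groups, the inclusion of the complement of the Nil summand. Upgrading this $\pi_*$-level decomposition to a functorial, spectrum-level splitting of $\asbl_{\Fin \subseteq \VCyc}$ itself --- which is what \ref{i:Bartels-domain} asserts --- is Bartels's theorem, proved via controlled-algebra retractions of the kind developed in Section~\ref{sec:controlled}. This is the main obstacle and the deepest geometric input of the proof, since naive summand-wise splittings need not be functorial in~$G$.

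Granted \ref{i:Bartels-domain}, parts \ref{i:field-char0} and \ref{i:L-Steimle} amount to showing that the Nil summands vanish, respectively vanish rationally. For \ref{i:field-char0}, the invertibility of $|F|$ in the regular ring $R$ combined with Maschke's theorem makes $R[F]$ regular, and hence $NK_n(R[F];\alpha) = 0$ for all $n$ and $\alpha$, while the Waldhausen $\widetilde{\text{Nil}}$-group likewise vanishes; the split injection from \ref{i:Bartels-domain} thus becomes an equivalence. For \ref{i:L-Steimle}, the required vanishing is the Lück-Steimle theorem, which asserts that for regular $R$ and finite $F$ the groups $NK_n(R[F];\alpha) \otimes \IQ$ are zero even without the invertibility hypothesis. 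Their argument combines a transfer argument, controlling $NK_*(R[F])$ by $NK_*(R) = 0$ up to $|F|$-torsion, with a Verschiebung--Frobenius analysis; rationalization then removes the torsion and yields the claim.
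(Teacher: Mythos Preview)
Your proposal is essentially correct and in fact goes well beyond the paper's own ``proof'', which consists entirely of three citations: Bartels for~\ref{i:Bartels-domain}, L\"uck--Reich for~\ref{i:field-char0}, and L\"uck--Steimle (generalizing Grunewald) for~\ref{i:L-Steimle}. What you have written is a fair sketch of the content behind those citations.

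One point deserves tightening. You open by saying that the Transitivity Principle reduces \emph{all three} parts to the corresponding statement for each virtually cyclic~$V$. This is accurate for \ref{i:field-char0} and~\ref{i:L-Steimle}, where the conclusion is a $\pi_*$- or $\pi_*^\IQ$-isomorphism, but the Transitivity Principle as stated in the paper (and as usually proved) is an isomorphism statement: it does \emph{not} say that split injectivity for each~$\asbl_{\Fin|_V}$ implies split injectivity for~$\asbl_{\Fin\subseteq\VCyc}$. Knowing that a Nil complement exists for each~$V$ separately does not assemble into a splitting of the relative assembly map for~$G$, precisely because of the functoriality issue you yourself flag. You recover by invoking Bartels's theorem directly, which is the right move, but the framing should be adjusted: Bartels's argument is not ``transitivity plus split injectivity for each~$V$'' --- it constructs the retraction globally, and the transitivity-style reduction to virtually cyclic groups is built into his controlled-algebra setup rather than applied as a black box beforehand. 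For \ref{i:field-char0} and~\ref{i:L-Steimle} your transitivity reduction is clean and correct, and your identification of the cofiber with the Nil terms, together with their (rational) vanishing under the stated hypotheses, is exactly how the cited references proceed.
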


\begin{proof}
Part~\ref{i:Bartels-domain} is the main result of~\cite{Bartels-domain}.
Part~\ref{i:field-char0} is shown in~\cite[Proposition~70]{LR-survey}. 
Part~\ref{i:L-Steimle} is proved in~\cite[Theorem~0.2]{L-Steimle} 
and generalizes~\cite[Corollary on page 165]{Grunewald}.
\end{proof}


\subsection{Rational computations}

After tensoring with the rational numbers, the Farrell-Jones Conjecture~\ref{conj:FJ} for regular rings can be reformulated in a more concrete and computational fashion as follows.

Assume that $R$ is a regular ring.
Recall from Theorem~\ref{thm:Fin-to-VCyc}\ref{i:L-Steimle} that the relative assembly map~$\asbl_{\Fin\subseteq\VCyc}$ induces isomorphisms
\begin{equation}
\label{eq:rel-asbl-rationally}
\pi_n\Bigl({EG(\Fin )_+\sma_{\Or G}\K(R[\oid{G}{-}])}\Bigr)\tensor_\IZ\IQ
\TO[\cong]
\pi_n\Bigr({EG(\VCyc)_+\sma_{\Or G}\K(R[\oid{G}{-}])}\Bigr)\tensor_\IZ\IQ
\,.
\end{equation}
The theory of equivariant Chern characters developed by L\"uck in~\cite{L-Chern} yields the following isomorphisms:
\begin{equation}
\label{eq:Lueck-Grunewald}
\begin{tikzcd}[row sep=small]
\ds
\adjustlimits\bigoplus_{(C)\in(\FCyc)}
\bigoplus_{s+t=n}
H_s(BZ_GC;\IQ)\tensor_{\IQ[W_G C]}\Theta_C\Bigl(K_t(R[C])\tensor_{\IZ}\IQ\Bigr)
\arrow[d, "\cong", pos=.1, shorten <=-.75em]
\\
\ds
\pi_n\Bigl(EG(\FCyc)_+\sma_{\Or G}\K(R[\oid{G}{-}])\Bigr)\tensor_\IZ\IQ
\arrow[d, "\cong", pos=.7, shorten >=-.5em]
\\
\ds
\pi_n\Bigl(EG(\Fin)_+\sma_{\Or G}\K(R[\oid{G}{-}])\Bigr)\tensor_\IZ\IQ
\,.
\end{tikzcd}
\end{equation}
Before we explain the notation, notice the analogy with the well-known isomorphism
\[
\bigoplus_{s+t=n}H_s(BG;\IQ)\tensor_\IZ\Bigl(K_t(R)\tensor_{\IZ}\IQ\Bigr)
\TO[\cong]
\pi_n\bigl(BG_+\sma\K(R)\bigr)\tensor_\IZ\IQ
\,,
\]
whose source corresponds to the summand in~\eqref{eq:Lueck-Grunewald} indexed by~$C=1$.

Given a subgroup $H$ of~$G$, we denote by $N_G H$ the normalizer and by $Z_G H$ the centralizer of $H$ in $G$, and we define the Weyl group as the quotient $W_G H = N_G H / (Z_G H \cdot H)$.
Notice that the Weyl group $W_G H$ of a finite subgroup $H$ is always finite, since it embeds into the outer automorphism group of~$H$.
We write $\FCyc$ for the family of finite cyclic subgroups of $G$, and $( \FCyc )$ for the set of conjugacy classes of finite cyclic subgroups.
Furthermore, $\Theta_C$ is an idempotent endomorphism of $K_t(R[C])\tensor_{\IZ}\IQ$, which corresponds to a specific idempotent in the rationalized Burnside ring of~$C$, and whose image is a direct summand of $K_t(R[C])\tensor_{\IZ}\IQ$ isomorphic to
\begin{equation}
\label{eq:Artin-defect}
\coker\Biggl(\, \bigoplus_{D\lneqq C} \ind_D^C\colon \bigoplus_{D\lneqq C}
     K_t(R[D])\tensor_{\IZ}\IQ \TO K_t(R[C])\tensor_{\IZ}\IQ \Biggr).
\end{equation}
The Weyl group acts via conjugation on $C$ and hence on $\Theta_C ( K_t(R[C])\tensor_{\IZ}\IQ)$.
The Weyl group action on the homology groups in the source of~\eqref{eq:Lueck-Grunewald} comes from the fact that $EN_G C / Z_G C$ is a model for $BZ_G C$.

\pagebreak

\begin{conjecture-FJ}[rationalized version]
\label{conj:FJ-rationalized}
\index{conjecture!Farrell-Jones!rationalized version}
For any group~$G$ and for any \emph{regular} ring~$R$ the composition of the Farrell-Jones assembly map and the isomorphisms~\eqref{eq:Lueck-Grunewald} and~\eqref{eq:rel-asbl-rationally}
\[
\adjustlimits\bigoplus_{(C)\in(\FCyc)}
\bigoplus_{s+t=n}
H_s(BZ_GC;\IQ)\tensor_{\IQ[W_G C]}\Theta_C\Bigl(K_t(R[C])\tensor_{\IZ}\IQ\Bigr)
\TO
K_n(R[G])\tensor_\IZ\IQ
\]
is an isomorphism for each~$n\in\IZ$.
\end{conjecture-FJ}

Analogously one obtains the following conjecture for Whitehead groups, which is the correct generalization of Conjecture~\ref{conj:Wh-inj} mentioned at the end of Subsection~\ref{subsec:h-cob}.

\begin{conjecture}
\label{conj:Wh-iso}
For any group~$G$ there is an isomorphism
\[
\begin{tikzcd}[row sep=small]
\scalebox{.95}{\(\negthickspace\negthickspace\ds
\bigoplus_{(C)\in(\FCyc)}
\Biggl(
          \IQ \tensor_{\IQ[W_G C]}\Theta_C\Bigl(        \wh(C)\tensor_{\IZ}\IQ\Bigr)
\ \oplus\
H_2(BZ_GC;\IQ)\tensor_{\IQ[W_G C]}\Theta_C\Bigl(K_{-1}(\IZ[C])\tensor_{\IZ}\IQ\Bigr)
\Biggr)
\negthickspace\negthickspace\negthickspace\)}
\arrow[d, "\cong", pos=.1, shorten <=-.85em]
\\
\ds
\wh(G)\tensor_\IZ\IQ
\,.
\end{tikzcd}
\]
\end{conjecture}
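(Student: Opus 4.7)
The plan is to derive Conjecture~\ref{conj:Wh-iso} as a consequence of the rationalized Farrell-Jones Conjecture~\ref{conj:FJ-rationalized} applied to $R=\IZ$ at $n=1$, combined with the short exact sequence~\eqref{eq:ses-Wh} defining $\wh(G)$. Tensoring~\eqref{eq:ses-Wh} with $\IQ$ kills the $\{\pm1\}$ summand and yields an isomorphism $\wh(G)\tensor_\IZ\IQ\cong\bigl(K_1(\IZ[G])\tensor_\IZ\IQ\bigr)/\bigl(G_{ab}\tensor_\IZ\IQ\bigr)$. So it suffices to decompose $K_1(\IZ[G])\tensor_\IZ\IQ$ via Conjecture~\ref{conj:FJ-rationalized} and then to identify the image of $G_{ab}\tensor_\IZ\IQ$ as a specific summand in that decomposition.

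First I would enumerate, for each conjugacy class $(C)\in(\FCyc)$, the contributions $H_s(BZ_GC;\IQ)\tensor_{\IQ[W_GC]}\Theta_C\bigl(K_t(\IZ[C])\tensor_\IZ\IQ\bigr)$ with $s+t=1$. For $(s,t)=(0,1)$, the trivial-subgroup term vanishes because $K_1(\IZ)=\{\pm1\}$ is torsion, while for $C\neq 1$ the rationalization of~\eqref{eq:ses-Wh} applied to~$C$ gives $K_1(\IZ[C])\tensor_\IZ\IQ\cong\wh(C)\tensor_\IZ\IQ$, reproducing the first summand of Conjecture~\ref{conj:Wh-iso}. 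For $(s,t)=(1,0)$, the trivial-subgroup contribution is $G_{ab}\tensor_\IZ\IQ$, while for $C\neq 1$ Swan's theorem that $\tilde K_0(\IZ[C])$ is finite implies $K_0(\IZ[C])\tensor_\IZ\IQ\cong\IQ$, generated by the class of the free module, which is induced from the trivial subgroup and hence killed by the Artin-defect projector~$\Theta_C$ by its very definition~\eqref{eq:Artin-defect}. For $(s,t)=(2,-1)$, the trivial-subgroup term vanishes since $K_{-1}(\IZ)=0$, and the nontrivial~$C$ terms produce the second summand of Conjecture~\ref{conj:Wh-iso}. Finally, Carter's vanishing theorem $K_{-n}(\IZ[C])=0$ for finite groups~$C$ and $n\geq 2$ eliminates every remaining summand.

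The main obstacle is the final identification: matching the $G_{ab}\tensor_\IZ\IQ$ subgroup obtained from the $(s,t,C)=(1,0,1)$ summand above with the image of the left-hand map in the rationalized sequence~\eqref{eq:ses-Wh}. Concretely, one must verify that the equivariant Chern character of~\cite{L-Chern} is compatible with the restriction of the Farrell-Jones assembly map to the classical piece $BG_+\sma\K(\IZ)$, so that the $H_1\bigl(BG;K_0(\IZ)\bigr)$ summand on the $E^2$-page of the Atiyah-Hirzebruch spectral sequence, identified in~\eqref{eq:H_1(BG;KZ)} with $G_{ab}$, corresponds precisely to the image of $G_{ab}$ in $K_1(\IZ[G])\tensor_\IZ\IQ$; this is essentially the content of~\cite[Assertion~15.8]{WaldhausenII}. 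Once this compatibility is in place, passing to the quotient by that summand yields exactly the direct-sum decomposition of $\wh(G)\tensor_\IZ\IQ$ asserted in Conjecture~\ref{conj:Wh-iso}.
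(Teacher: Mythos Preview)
The statement is a \emph{conjecture}, so the paper does not prove it; what the paper does is introduce it with ``Analogously one obtains the following conjecture'' and then, in the Remark immediately after, explain the passage from the source of Conjecture~\ref{conj:FJ-rationalized} (for $R=\IZ$, $n=1$) to the source of Conjecture~\ref{conj:Wh-iso}. Your derivation follows exactly this route and supplies more detail than the paper's brief Remark: the paper records the identification $K_1(\IZ[C])\tensor_\IZ\IQ\cong\wh(C)\tensor_\IZ\IQ$ for finite~$C$ via~\eqref{eq:ses-Wh}, and the fact that among the $(s,t)=(1,0)$ summands only $C=1$ survives (giving $G_{ab}\tensor_\IZ\IQ$, consistent with~\eqref{eq:ses-Wh}); you add the explicit treatment of the $(s,t)=(2,-1)$ terms and invoke Carter's vanishing $K_{-n}(\IZ[C])=0$ for $n\ge2$ to kill the remaining summands, which the paper leaves implicit. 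So your proposal is correct and is essentially the paper's own argument, spelled out more fully.
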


Conjecture~\ref{conj:Wh-iso} implies Conjecture~\ref{conj:Wh-inj}, because in fact
\[
\colim_{H\in\obj\Sub G(\Fin)}
\wh(H)\tensor_\IZ\IQ
\quad\cong\quad
\bigoplus_{(C)\in(\FCyc)}
          \IQ \tensor_{\IQ[W_G C]}\Theta_C\Bigl(        \wh(C)\tensor_{\IZ}\IQ\Bigr)
\]
and the map~\eqref{eq:Wh} coincides with the restriction to this summand of the map in Conjecture~\ref{conj:Wh-iso}.

\begin{remark}
For finite groups $H$ we have that $\wh(H) \tensor_{\IZ} \IQ \cong K_1( \IZ [H]) \tensor_{\IZ} \IQ$ by the exact sequence \eqref{eq:ses-Wh}.
The only difference between the sources of the maps in Conjectures~\ref{conj:FJ-rationalized} and~\ref{conj:Wh-iso} is the absence from~\ref{conj:Wh-iso} of the summands with $(s,t)=(1,0)$.
For finite groups~$H$ the natural map $\IQ \cong K_0( \IZ) \tensor_{\IZ} \IQ \TO K_0( \IZ[ H]) \tensor_{\IZ} \IQ$ is an isomorphism, and hence it follows from~\eqref{eq:Artin-defect} that the only non-vanishing summand among these is $H_1(BG ; \IQ) \cong G_{ab} \tensor_{\IZ} \IQ$ corresponding to~$C=1$. This is consistent with the exact sequence $\eqref{eq:ses-Wh}$.
\end{remark}

Finally, we note that in the special case when $R=\IZ$ the dimensions of the $\IQ$-vector spaces in~\eqref{eq:Artin-defect} for any~$t$ and any finite cyclic group~$C$ can be explicitly computed as follows.

\begin{theorem}
\label{Patronas}
Let $C$ be a cyclic group of order~$c$.
Then
\[
\dim_{\IQ}
\Theta_C\Bigl( K_t(\IZ[C])\tensor_\IZ\IQ \Bigr)
=
\begin{cases}
s(c) -1 & \text{if $t=-1$;}\\
\totient(c)/2 -1 & \text{if $t=1$ and $c>2$;}\\
1 & \text{if $t>1$, $t \equiv 1\!\!\!\mod{4}$, and $c = 2$;}\\
\totient(c)/2  & \text{if $t>1$, $t \equiv 1\!\!\!\mod{2}$, and $c > 2$;}\\
0 & \text{otherwise.}
\end{cases}
\]
Here $\totient(c)=\#\SET{x\in C}{\text{$x$ generates~$C$}}$ is Euler's $\totient$-function, $c = \prod_{i=1}^s p_i^{e_i}$ is the prime factorization of~$c$, and $s(c) = \sum_{i=1}^s \totient ( n / p_i^{e_i} ) / f_{p_i}$, where $f_{p_i}$ is the smallest number such that
$p_i^{f_{p_i}} \equiv 1\!\!\!\mod{n/ p^{e_i}}$.
\end{theorem}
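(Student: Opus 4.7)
The plan is a two-step reduction. First, I use the rationalized Burnside ring: the ghost isomorphism $A(C_c) \tensor_\IZ \IQ \cong \prod_{d \mid c} \IQ$ decomposes $K_t(\IZ[C_c]) \tensor_\IZ \IQ$ as a direct sum of eigenspaces $\bigoplus_{d \mid c} N_d$ indexed by divisors, and the key point is that the dimension $a_d := \dim_\IQ N_d$ depends only on the abstract cyclic group $C_d$ and equals $\dim_\IQ \Theta_{C_d}(K_t(\IZ[C_d]) \tensor_\IZ \IQ)$. Summing then gives $\dim_\IQ K_t(\IZ[C_e]) \tensor_\IZ \IQ = \sum_{d \mid e} a_d$ for every divisor $e \mid c$, and Möbius inversion on the divisor lattice yields the closed formula
\[
\dim_\IQ \Theta_{C_c}(K_t(\IZ[C_c]) \tensor_\IZ \IQ) = \sum_{d \mid c} \mu(c/d) \, \dim_\IQ K_t(\IZ[C_d]) \tensor_\IZ \IQ .
\]
Thus the problem reduces to computing the absolute dimensions $\dim_\IQ K_t(\IZ[C_d]) \tensor_\IZ \IQ$ for every $d \mid c$, and then assembling these via the alternating sum above.

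To compute the absolute dimensions I would use the Wedderburn decomposition $\IQ[C_d] \cong \prod_{e \mid d} \IQ(\zeta_e)$ together with classical structural theorems. For $t \geq 2$, the map from $K_t$ of the group ring to $K_t$ of the maximal order in $\IQ[C_d]$ is a rational isomorphism, producing $K_t(\IZ[C_d]) \tensor \IQ \cong \bigoplus_{e \mid d} K_t(\IZ[\zeta_e]) \tensor \IQ$; Borel's theorem then computes each summand from the invariants $(r_1, r_2)$ of the cyclotomic field. For $t = 1$, the rank coincides with the unit rank of the maximal order, which by Dirichlet's unit theorem equals $\sum_{e \mid d,\, e \geq 3}(\varphi(e)/2 - 1)$. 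For $t = 0$, Swan's theorem gives $\dim_\IQ K_0(\IZ[C_d]) \tensor \IQ = 1$. For $t = -1$, Carter's theorem expresses the rank in terms of the numbers of irreducible $\IR[C_d]$- and $\IQ_p[C_d]$-representations, with the $p$-adic counts governed by the factorization of $\Phi_e(x)$ over $\IQ_p$. For even $t \geq 2$ and for $t \leq -2$ the dimensions vanish, by Borel's vanishing and by the vanishing of negative $K$-theory for maximal orders in semisimple $\IQ$-algebras.

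Once these absolute dimensions are tabulated, the Möbius inversion produces the stated formulas. For odd $t \geq 3$ and $c \geq 3$, the contributions from proper divisors cancel pairwise and leave exactly $\varphi(c)/2$; for $c = 2$ and $t \equiv 1 \mod 4$ with $t \geq 5$, only $K_t(\IZ) \tensor \IQ$ survives and contributes $1$. The $t = 1$ case with $c > 2$ yields $\varphi(c)/2 - 1$ by the same cancellation, shifted by the Dirichlet defect of $-1$. The remaining cases (even $t \geq 2$, $t = 0$ with $c > 1$, and $t \leq -2$) collapse to zero.

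The main obstacle is the $t = -1$ case, where verifying that the Möbius inversion of Carter's rank formula simplifies to $s(c) - 1$ requires careful tracking of the residue degrees $f_{p_i}$ across the divisor lattice. The key arithmetic input is that, for a prime $p \nmid e$, the cyclotomic polynomial $\Phi_e(x)$ has exactly $\varphi(e)/f_{p,e}$ irreducible factors over $\IQ_p$, where $f_{p,e}$ is the multiplicative order of $p$ modulo $e$; and for $e = p^a m$ with $p \nmid m$ the ramified extension $\IQ_p(\zeta_{p^a})/\IQ_p$ contributes a single irreducible factor. Feeding these counts into Carter's formula and applying Möbius inversion produces a sum involving the quantities $\varphi(c/p_i^{e_i})/f_{p_i}$, and verifying that this sum telescopes to exactly $s(c) - 1$ is the most delicate arithmetic step of the proof; this is essentially the content of Patronas's original computation.
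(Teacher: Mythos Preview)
The paper does not actually give a proof of this theorem. Immediately after the statement it simply says: ``This result is proved in~\cite[Theorem on page~9]{Patronas}, and more details will appear in~\cite{PRV}.'' So there is no argument in the paper to compare your proposal against.

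That said, your strategy is the standard and correct one, and is almost certainly what the cited references carry out. The reduction via the idempotent decomposition of the rationalized Burnside ring to the Möbius-inversion formula
\[
\dim_\IQ \Theta_{C_c}\bigl(K_t(\IZ[C_c])\tensor_\IZ\IQ\bigr)
=\sum_{d\mid c}\mu(c/d)\,\dim_\IQ\bigl(K_t(\IZ[C_d])\tensor_\IZ\IQ\bigr)
\]
is exactly right, as is your list of inputs for the absolute dimensions: passage to the maximal order (rationally harmless), Wedderburn decomposition of~$\IQ[C_d]$ into cyclotomic fields, Borel's rank computation for $t\ge2$, Dirichlet's unit theorem for $t=1$, Swan's result for $t=0$, Carter's formula for $t=-1$, and the vanishing of negative $K$-theory for orders in semisimple $\IQ$-algebras for $t\le-2$. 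Your assessment that the $t=-1$ case is the most delicate arithmetic step, requiring the factorization of cyclotomic polynomials over $\IQ_p$ and a careful telescoping under Möbius inversion, is accurate.

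One small point worth tightening: in the step where you assert that the $d$-th eigenspace $N_d$ inside $K_t(\IZ[C_c])\tensor_\IZ\IQ$ has dimension equal to $\dim_\IQ\Theta_{C_d}\bigl(K_t(\IZ[C_d])\tensor_\IZ\IQ\bigr)$, you should make explicit that this uses the Green-functor (or Mackey-functor) structure of $K_*(\IZ[-])\tensor_\IZ\IQ$ over the Burnside ring, together with the fact that induction $\ind_{C_d}^{C_c}$ is injective rationally. This is standard but should be stated, since it is what guarantees that the eigenspace decomposition is compatible with the inclusion-exclusion over subgroups.
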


This result is proved in~\cite[Theorem on page~9]{Patronas}, and more details will appear in~\cite{PRV}.


\subsection{Some related conjectures}
\label{subsec:related-conjectures}

We now survey very briefly some other conjectures that are analogous to Conjecture~\ref{conj:FJ}. For details and further explanations we recommend~\cite{FRR}, \cite{Kreck-Lueck-Novikov}, \cite{L-book-project}, \cite{LR-survey}, and~\cite{Mislin-Valette}.

In~\cite{FJ-iso}, Farrell and Jones formulated Conjecture~\ref{conj:FJ} not only for algebraic $K$-theory, but also for~$L$-theory; more precisely, for~$\BL^{\langle-\infty\rangle}(R[G])$, the quadratic algebraic $L$-theory spectrum of~$R[G]$ with decoration~$-\infty$, for any ring with involution~$R$.
The corresponding assembly map is constructed completely analogously, by applying the machinery of Subsection~\ref{subsec:assembly} to the functor~$\BL^{\langle-\infty\rangle}(R[-])$.
In the special case of torsion-free groups~$G$, this conjecture is equivalent to the statement that the classical assembly map
\(
BG_+\sma\BL^{\langle-\infty\rangle}(R)\TO\BL^{\langle-\infty\rangle}(R[G])
\)
is a $\pi_*$-isomorphism, for any ring~$R$, not necessarily regular.

If $G$ is a torsion-free group and the Farrell-Jones Conjectures hold for both $\K(\IZ[G])$ and~$\BL^{\langle-\infty\rangle}(\IZ[G])$, then the Borel Conjecture is true for manifolds with fundamental group~$G$ and dimension at least~$5$.
The Borel Conjecture\index{conjecture!Borel} states that, if $M$ and~$N$ are closed connected aspherical manifolds with isomorphic fundamental groups, then $M$ and~$N$ are homeomorphic, and every homotopy equivalence between $M$ and~$N$ is homotopic to a homeomorphism.
In short, the Borel Conjecture says that closed aspherical manifolds are topologically rigid.
Recall that a connected CW~complex~$X$ is aspherical if its universal cover is contractible, or equivalently if $\pi_n(X)=0$ for all~$n>1$.

We also mention that the Farrell-Jones Conjecture in algebraic $L$-theory implies the Novikov Conjecture about the homotopy invariance of higher signatures.

Furthermore, Farrell and Jones also formulated an analog of Conjecture~\ref{conj:FJ} for the stable pseudo-isotopy functor, or equivalently for Waldhausen's $A$-theory, also known as algebraic $K$-theory of spaces.
We refer to~\cite{ELPUW} for a modern approach to this conjecture and in particular for its many applications to automorphisms of manifolds.

Finally, the analog of the Farrell-Jones Conjecture~\ref{conj:FJ} for the complex topological $K$-theory of the reduced complex group $C^*$-algebra of~$G$ is equivalent to the famous Baum-Connes Conjecture, formulated by Paul Baum, Alain Connes, and Nigel Higson in~\cite{Baum-Connes}.
For the Baum-Connes Conjecture, the relative assembly map~$\asbl_{\Fin\subseteq\VCyc}$ is always a $\pi_*$-isomorphism; compare and contrast with Theorem~\ref{thm:Fin-to-VCyc}.
Also the Baum-Connes Conjecture implies the Novikov Conjecture.
For more information on the relation between the Baum-Connes Conjecture and the Farrell-Jones Conjecture in $L$-theory we refer to~\cite{Land-Nikolaus} and~\cite{Rosenberg-for-topologists}.


\section{State of the art}
\label{sec:state}

We now overview what we know and don't know about the Farrell-Jones Conjecture~\ref{conj:FJ}, to the best of our knowledge in January~2017.
We aim to give immediately accessible statements, which may not always reflect the most general available results.
We restrict our attention to algebraic $K$-theory and ignore the related conjectures mentioned in the previous subsection.


\subsection{What we know already}

The following theorem is the result of the effort of many mathematicians over a long period of time.
The methods of controlled algebra and topology that underlie this theorem (and that we illustrate in the next section) were pioneered by Steve Ferry \cite{Ferry} and Frank Quinn \cite{Quinn-Ends}, and were then applied with enourmous success by
Farrell-Hsiang \cite{FH-space-form}, \cite{FH-poly}, \cite{FH-flat}  and  Farrell-Jones \cite{FJ-K-dynamics}, \cite{FJ-Mostow}, \cite{FJ-non-pos}, \cite{FJ-iso}.
Many ideas in the proofs of the following results originate in these articles.
The formulation of the theorem below is meant to be a snapshot of the best results available today, as opposed to a comprehensive historical overview of the many important intermediate results predating the works quoted here.

\begin{theorem}
\label{thm:state}
\index{conjecture!Farrell-Jones!state of the art}
Let $\FJclass$ be the smallest class of groups that satisfies the following two conditions.
\begin{enumerate}[label=(\arabic*)]
\item\label{i:contains}
The class~$\FJclass$ contains:
\begin{enumerate}[label=(\alph*)]
\item \label{i:hyp}
hyperbolic groups \cite{BLR-Inventiones};
\item \label{i:CAT0}
finite-dimensional CAT(0)-groups \cite{BL-Borel}, \cite{Wegner-CAT0};
\item
virtually solvable groups \cite{Farrell-Wu}, \cite{Wegner-solv};
\item\label{i:GMeinertR}
Baumslag-Solitar groups and graphs of abelian groups \cite{Farrell-Wu}, \cite{GMeinertR};
\item\label{i:lattices}
lattices in virtually connected Lie groups \cite{BFL-lattices}, \cite{Kammeyer-etal};
\item \label{arithmetic}
arithmetic and $S$-arithmetic groups \cite{BLRR}, \cite{Rueping-S-arithmetic};
\item
fundamental groups of connected manifolds of dimension at most~$3$ \cite{Roushon-3mfds};
\item \label{i:coxeter}
Coxeter groups;
\item
Artin braid groups \cite{AFR};
\item \label{i:mcg}
mapping class groups of oriented surfaces of finite type \cite{BB-MCG}.
\end{enumerate}

\item\label{i:closed}
The class~$\FJclass$ is closed under:
\begin{enumerate}[label=(\Alph*)]
\item \label{i:sub}
subgroups \cite{BR-coefficients};
\item
overgroups of finite index \cite[Section~6]{BLRR};
\item \label{i:prod}
finite products;
\item \label{i:coprod}
finite coproducts;
\item
directed colimits \cite{BEchterhoffL};
\item
graph products \cite{Gandini-Rueping};
\item
\label{i:closed-under-extensions}
if $1\TO N\TO G\TO[p] Q\TO1$ is a group extension such that $Q\in\FJclass$ and $p^{-1}(C)\in\FJclass$ for each infinite cyclic subgroup~$C\le Q$, then $G\in\FJclass$;
\item \label{i:B-rel-hyp}
if $G$ is a countable group that is relatively hyperbolic to subgroups $P_1,\ldots,P_n$ and each $P_i\in\FJclass$, then $G\in\FJclass$ \cite{B-rel-hyp}.
\end{enumerate}
\end{enumerate}
Then the Farrell-Jones Conjecture~\ref{conj:FJ} holds for any ring~$R$ and for any group~$G\in\FJclass$.
\end{theorem}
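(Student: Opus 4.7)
The plan is to prove parts~(1) and~(2) of the theorem separately; the conclusion then follows at once from the definition of $\FJclass$ as the smallest class closed under these operations. Throughout I would work not with Conjecture~\ref{conj:FJ} as literally stated but with its ``with coefficients in an additive category'' strengthening, because it is equivalent to the ring-coefficient version for group rings while enjoying vastly better formal properties---in particular it is automatically inherited by subgroups, which gives~(2)\ref{i:sub} for free, and it interacts well with the fibered arguments needed for the extension property~(2)\ref{i:closed-under-extensions}.

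The heart of the argument is the geometric list~(1). The unifying idea, going back to Farrell-Hsiang-Jones and refined greatly by Bartels, L\"uck, Reich, and Wegner, is to establish a single geometric criterion---``transfer reducibility'' or, in its classical form, the ``Farrell-Hsiang condition''---which implies that $\asbl_\VCyc$ is a $\pi_*$-isomorphism. The input for this criterion is a cocompact $G$-action on a finite-dimensional flow space together with a sequence of $G$-equivariant open covers whose isotropy lies in $\VCyc$ and whose sheets become arbitrarily long and thin along the flow lines. Controlled algebra then converts this geometric data into a contracting homotopy for the assembly map. Verifying the criterion case by case, one takes: a compactified Rips complex with its geodesic flow for hyperbolic groups~(1)\ref{i:hyp}; the CAT(0) space itself together with Wegner's long and thin covers for~(1)\ref{i:CAT0}; expanding endomorphisms and reduction to finite quotients for virtually solvable and Baumslag-Solitar groups; Davis complexes for Coxeter groups~(1)\ref{i:coxeter}; and Masur-Minsky hierarchies for mapping class groups~(1)\ref{i:mcg}. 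The remaining classes (lattices, arithmetic and $S$-arithmetic groups, graphs of abelian groups, $3$-manifold groups, Artin braid groups) are reduced to the above by exploiting their known decompositions in tandem with the closure properties of~(2).

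Part~(2) is handled by essentially formal arguments. Subgroup closure follows from the coefficient reformulation as noted; overgroups of finite index reduce to subgroups via a restriction-induction argument. Finite products and coproducts are handled by inspecting the source of~$\asbl_\VCyc$ and applying Mayer-Vietoris in the equivariant homology theory of Remark~\ref{rem:source-assembly}\ref{rem-i:AHSS}. Directed colimits follow from the commutation of algebraic $K$-theory with filtered colimits on both source and target. The extension property~(2)\ref{i:closed-under-extensions} is the most substantive closure step: one applies the Transitivity Principle~\ref{transitivity} on $G$ to the family of preimages $p^{-1}(V)$ with $V$ virtually cyclic in $Q$, and then the hypothesis on $p^{-1}(C)$ for infinite cyclic $C\le Q$ together with the assumed isomorphism for $Q$ fit together to force~$\asbl_\VCyc$ for $G$ to be an isomorphism. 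Graph products and countable relatively hyperbolic groups combine this with the geometric criterion of~(1) applied to suitable auxiliary complexes (Salvetti-type complexes, horoball-coned cusped spaces).

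The hardest step is without question the geometric construction of the long and thin $\VCyc$-covers for hyperbolic and CAT(0) groups; essentially all the nontrivial geometry of the entire theorem is concentrated there. Once the covers exist, the controlled-algebra machinery turns them almost mechanically into the required chain homotopies. A secondary difficulty is managing the bookkeeping around the with-coefficients reformulation: without it, the subgroup and extension closure properties would lose their formal character and the induction organizing~$\FJclass$ would not close up on itself.
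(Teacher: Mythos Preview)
Your overall strategy—pass to the with-coefficients version and then invoke the cited references for~(1), handle~(2) by formal arguments—matches the paper's approach. However, there is a genuine gap: you omit the \emph{finite wreath product} strengthening, and this is precisely what makes closure under finite-index overgroups~(2)(B) work. Your proposed ``restriction-induction argument'' does not do the job: knowing the conjecture for a finite-index subgroup~$H\le G$ does not by itself give it for~$G$. The standard trick is that $G$ embeds into a wreath product $H\wr F$ with $F$ finite, so if one proves the conjecture for all such wreath products of~$H$ (this is the ``with finite wreath products'' version), then subgroup closure yields~$G$. The paper is explicit about this, and also notes that some of the cited references for~(1) need to be supplemented to cover the wreath-product version; without tracking this, the induction defining~$\FJclass$ does not close up.

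A few smaller discrepancies in~(2): the paper handles finite products not by Mayer-Vietoris but by applying the with-coefficients inheritance under projections together with the Transitivity Principle~\ref{transitivity}, reducing to the Farrell-Jones Conjecture for finite products of virtually cyclic groups (and then uses $(G_1\times G_2)\wr F\le (G_1\wr F)\times(G_2\wr F)$ for the wreath-product extension). Finite coproducts are deduced from~(2)\ref{i:closed-under-extensions} via the canonical map to the product, not from Mayer-Vietoris. And Coxeter groups are handled simply by observing that they are CAT(0) by Moussong's theorem, so fall under~(1)\ref{i:CAT0}; no separate geometric argument is needed.
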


\begin{proof}
In order to have the inheritance properties formulated in~\ref{i:closed} one needs to work with a slight generalization of the Farrell-Jones Conjecture.
First, one needs to allow coefficients in arbitrary additive categories with $G$-actions \cite{BR-coefficients};
then, one says that the conjecture with finite wreath products is true for~$G$ if the conjecture holds not only for $G$, but also for all wreath products $G \wr F$ of~$G$ with finite groups~$F$ \cite{Roushon-wreath}, \cite[Section~6]{BLRR}.
The Farrell-Jones Conjecture with coefficients and finite wreath products is true for all groups listed under~\ref{i:contains} and has all the inheritance properties listed under~\ref{i:closed}.

Some of the earlier references given above omit the discussion of the version with finite wreath products; consult \cite[Section~6]{BLRR} and \cite[Proposition~1.1]{Gandini-Rueping} for the corresponding extensions.

We discuss the statements \ref{i:coxeter}, \ref{i:prod}, \ref{i:coprod} and \ref{i:closed-under-extensions}, for which no reference was provided above.
Coxeter groups~\ref{i:coxeter} are known to fall under \ref{i:CAT0} by a result of Moussong; compare \cite[Theorem~12.3.3]{Davis-book}.
For \ref{i:prod} use \cite[Corollary~4.3]{BR-coefficients} applied to the projection to the factors, the Transitivity Principle~\ref{transitivity},
and the fact that the Farrell-Jones Conjecture is known for finite products of virtually cyclic groups.
The extension to the version with finite wreath products uses the fact that $(G_1 \times G_2) \wr F$ is a subgroup of $(G_1 \wr F ) \times (G_2 \wr F)$.
Finite coproducts are treated similarly using property~\ref{i:closed-under-extensions} and the natural map from the coproduct to the product; compare \cite[Proposition~1.1]{Gandini-Rueping}.
Statement~\ref{i:closed-under-extensions} itself is simply a combination of \cite[Corollary~4.3]{BR-coefficients} and the Transitivity Principle~\ref{transitivity}.
\end{proof}


\subsection{What we don't know yet}
\label{subsec:dont-know}

At the time of writing the Farrell-Jones Conjecture~\ref{conj:FJ} seems to be open for the following classes of groups:
\begin{enumerate}[label=(\roman*)]
\index{conjecture!Farrell-Jones!open cases}
\item
Thompson's groups;
\item
outer automorphism groups of free groups;
\item
linear groups;
\item
(elementary) amenable groups;
\item
infinite products of groups (satisfying the Farrell-Jones Conjecture).
\end{enumerate}
However, for some of these groups there are partial injectivity results, as we explain in Remark~\ref{rem:partial-inj} below.


\subsection{Injectivity results}

The next theorem gives two examples of injectivity results for assembly maps in algebraic $K$-theory.
Part~\ref{i:eventual-inj} is proved using the trace methods explained in Section~\ref{sec:trace} below, where more rational injectivity results are described.
Part~\ref{i:KNR} is based on a completely different approach using controlled algebra, the descent method due to Gunnar Carlsson and Erik Pedersen~\cite{CP}.
For this method to work, the group has to satisfy some mild metric conditions, which are not needed for the weaker statement in part~\ref{i:eventual-inj}.
One such condition goes back to \cite{FH-Novikov}.
The condition of {finite asymptotic dimension} appeared in the context of algebraic $K$-theory in~\cite{Bartels-squeezing} and~\cite{CG, CG-Addendum}, and was later generalized to {finite decomposition complexity} in~\cite{RTY}.
The extension to non-classical assembly maps appeared in \cite{BRosenthal, BRosenthal-Erratum} and~\cite{Kasprowski}.
The statement in part~\ref{i:KNR} below is from \cite{KNR} and further improves and
combines these developments.
We also mention \cite{Ferry-Weinberger} for yet another approach to injectivity results.

Recall from Theorem~\ref{thm:Fin-to-VCyc} that the relative assembly map
\[
\pi_n\Bigl(EG(\Fin )_+\sma_{\Or G}\K(\IZ[\oid{G}{-}])\Bigr)
\TO
\pi_n\Bigl(EG(\VCyc)_+\sma_{\Or G}\K(\IZ[\oid{G}{-}])\Bigr)
\]
is always split injective, and it becomes an isomorphism after applying $- \tensor_{\IZ} \IQ$ if $R$ is regular, e.g., if $R=\IZ$.
Therefore the results below would follow if we knew the Farrell-Jones Conjecture~\ref{conj:FJ}.

\begin{theorem}
\label{thm:inj-results}
\index{conjecture!Farrell-Jones!injectivity results}
Assume that there exists a finite-dimensional $EG(\Fin)$, and that there exists an upper bound on the orders of the finite subgroups of~$G$.
\begin{enumerate}[label=(\roman*)]
\item\label{i:eventual-inj}
If $R=\IZ$, then there exists an integer~$L>0$ such that for every~$n\ge L$ the rationalized assembly map
\[
\pi_n\Bigl(EG(\Fin)_+ \sma_{\Or G}\K(\IZ[\oid{G}{-}])\Bigr)\tensor_\IZ\IQ
\TO K_n(\IZ[G])\tensor_\IZ\IQ
\]
is injective.
\item\label{i:KNR}
Assume furthermore that $G$ has regular finite decomposition complexity.
Then for any ring~$R$ the assembly map
\[
EG(\Fin)_+\sma_{\Or G}\K(R[\oid{G}{-}])
\TO
\K(R[G])
\]
is split injective on~$\pi_*$.
\end{enumerate}
\end{theorem}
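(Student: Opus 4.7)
The plan is to prove the two parts by separate methods: part~\ref{i:eventual-inj} via trace methods, as developed in Section~\ref{sec:trace} of this article, and part~\ref{i:KNR} via controlled algebra and the descent method of Carlsson-Pedersen, adapted to the family~$\Fin$.

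For part~\ref{i:eventual-inj}, I would follow the \BHM{} strategy generalized to the Farrell-Jones assembly map with family~$\Fin$, as in the joint work with L\"uck and Rognes referenced in the introduction. First I would factor the rationalized assembly map through the cyclotomic trace $\K(\IZ[G])\TO\TC(\IZ[G])$, and then further through \BHM's functor~$\C$, which is better suited to splittings into classifying-space pieces. Next, using tom~Dieck-style splittings of equivariant fixed points and the cyclotomic structure of~$\C$, I would construct a detection map from the trace target onto a product, indexed by conjugacy classes of finite cyclic subgroups of~$G$, of classifying-space homology spectra. The hypothesis that $EG(\Fin)$ is finite-dimensional makes the equivariant Atiyah-Hirzebruch spectral sequence of Remark~\ref{rem:source-assembly}\ref{rem-i:AHSS} concentrated in finitely many rows, and the upper bound on the orders of finite subgroups bounds the denominators appearing in the rational fixed-point computations. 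Together these two quantitative inputs yield an integer~$L$, depending on the dimension of~$EG(\Fin)$ and on the subgroup-order bound, such that for $n\ge L$ the composition from the source of the assembly map to the detection target is injective, and hence the rationalized assembly map is injective in that range. The main obstacle is controlling the contribution of the nontrivial finite subgroups to the trace-theoretic computation; this is precisely where the classical \BHM{} theorem, which only treats the classical assembly map for torsion-free groups, has to be genuinely refined, and it is the reason the conclusion only holds above a threshold~$L$.

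For part~\ref{i:KNR}, I would use the Carlsson-Pedersen descent method in the equivariant refinement developed by Bartels, Rosenthal, Kasprowski, and finally by KNR. The first step is to build a suitable metric $G$-space by taking a cocompact model for~$EG(\Fin)$---available because $EG(\Fin)$ is finite-dimensional and the orders of finite stabilizers are bounded---and compactifying it by an appropriate ``corona at infinity''. The second step is to define a category of $G$-equivariant, geometrically controlled $R$-modules over this space and consider its nonconnective algebraic $K$-theory spectrum, which receives a natural map from~$\K(R[G])$. Third, the descent argument produces a commutative diagram in which the assembly map $EG(\Fin)_+\sma_{\Or G}\K(R[\oid{G}{-}])\TO\K(R[G])$ is compared with a controlled analogue, and the cofibre of this comparison is identified with the $K$-theory of modules supported near infinity. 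The essential role of regular finite decomposition complexity is to force this germs-at-infinity spectrum to have trivial $K$-theory, so that the descent diagram collapses to the desired retraction of the assembly map. I expect the main technical obstacle to be the precise construction of the equivariant, geometrically controlled category and the verification of the descent diagram at the spectrum level; both are delicate, but the blueprint is already established in the references cited in the theorem.
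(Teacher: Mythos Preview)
Your high-level identifications are correct and match the paper: part~\ref{i:eventual-inj} is deduced from the trace-method machinery of~\cite{kc} (stated here as Theorem~\ref{thm:lrrv-main}), and part~\ref{i:KNR} is~\cite[Theorem~1.3]{KNR}. But in both parts your sketch misidentifies where the hypotheses actually enter, and in part~\ref{i:KNR} this leads to a genuine gap.

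For part~\ref{i:eventual-inj}, the bound on the orders of finite subgroups is \emph{not} used to ``bound denominators in the rational fixed-point computations.'' Its role is the one spelled out in Remark~\ref{rem:lrrv-main}\ref{i:our-B}: the detection step requires assumption~\ref{our-B}, i.e., rational injectivity of
\(
K_t(\IZ[\zeta_c])\tensor_\IZ\IQ\to\prod_p K_t(\IZ_p\tensor_\IZ\IZ[\zeta_c];\IZ_p)\tensor_\IZ\IQ
\),
and for each fixed~$c$ this may fail for at most finitely many~$t$. The subgroup-order bound limits the relevant~$c$'s to a finite set, and the finite-dimensionality of~$EG(\Fin)$ limits the relevant~$s$'s in the Atiyah-Hirzebruch spectral sequence; together these produce the threshold~$L$. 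Your paragraph never invokes~\ref{our-B} or the arithmetic input from~\cite{Schneider}, without which the detection argument does not go through.

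For part~\ref{i:KNR} there are two problems. First, your claim that a \emph{cocompact} model for~$EG(\Fin)$ is ``available because $EG(\Fin)$ is finite-dimensional and the orders of finite stabilizers are bounded'' is false; no such implication holds, and the theorem as stated does not assume cocompactness. Second, you describe the classical Carlsson--Pedersen argument via a single compactification with corona and a germs-at-infinity category. That is the method for finite asymptotic dimension or finite~$BG$, but it is \emph{not} how regular finite decomposition complexity is used in~\cite{KNR}. There the argument proceeds by an inductive decomposition scheme (controlled Mayer--Vietoris over iterated decompositions into families of uniformly bounded pieces), not by producing one global boundary. As written, your outline proves neither the result nor a weaker special case under the stated hypotheses.
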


\pagebreak

\begin{proof}
\ref{i:eventual-inj} is a consequence of Theorem~\ref{thm:lrrv-main} below, or rather of its more general version in~\cite[Main Technical Theorem~1.16]{kc}; see Remark~\ref{rem:lrrv-main}\ref{i:our-B} and~\cite[Theorem~1.15]{kc}, where the result is only stated for cocompact~$EG(\Fin)$, but the proof given on page~1015 only uses finite-dimensionality and the existence of a bound on the order of the finite cyclic subgroups.
\ref{i:KNR} is~\cite[Theorem~1.3]{KNR}.
\end{proof}

\begin{remark} \label{rem:partial-inj}
Theorem~\ref{thm:inj-results} applies to groups for which no isomorphism results were known at the time of writing:
\begin{enumerate}[label=(\roman*)]
\item
The existence of an upper bound on the orders of the finite subgroups of~$G$ follows from the existence of a cocompact~$EG(\Fin)$.
For example, this is the case for outer automorphism groups of free groups, to which Theorem~\ref{thm:inj-results}\ref{i:eventual-inj} then applies.
\item
Regular finite decomposition complexity is a property shared by all groups that are either (a)~of finite asymptotic dimension, (b)~elementary amenable, (c)~linear, or (d)~subgroups of virtually connected Lie groups.
\end{enumerate}
\end{remark}


\section{Controlled algebra methods}
\label{sec:controlled}

As noted in the previous section, most proofs of the Farrell-Jones Conjecture~\ref{conj:FJ} use the ideas and technology of controlled algebra\index{controlled algebra|see {geometric modules}}, which are the focus of this section.
The ultimate goal is to explain the Farrell-Hsiang Criterion for assembly maps to be $\pi_*$-isomorphisms.
The criterion goes back to \cite{FH-space-form} and has been successfully applied in many cases, e.g.~\cite{FH-poly}, \cite{FH-flat}, \cite{Quinn-virtually-abelian}, and plays an important role in the proof of Theorem~\ref{thm:state}\ref{i:contains}\ref{i:lattices}~\cite{BFL-lattices}. 
The formulation that we give here in Theorem~\ref{FH-criterion} is due to~\cite{BL-FH}.

Our goal is to keep the exposition as concrete as possible, and to work out the main details of the proof of the following result, establishing the first nontrivial case of the Farrell-Jones Conjecture.

\begin{theorem} \label{thm:FJ-Zn} The Farrell-Jones Conjecture~\ref{conj:FJ} holds for finitely generated free abelian groups, i.e., for any $n \geq 2$ and for any ring~$R$, the assembly map
\[
E\IZ^n (\Cyc)_+\sma_{\Or \IZ^n}\K(R[\oid{\IZ^n}{-}])
\TO
\K(R[\IZ^n])
\]
is a $\pi_*$-isomorphism.
\end{theorem}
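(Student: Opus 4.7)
The plan is to verify the hypothesis of the Farrell-Hsiang Criterion (Theorem~\ref{FH-criterion}) for $G = \IZ^n$ and the family $\Cyc$, using that for the torsion-free abelian group $\IZ^n$ every virtually cyclic subgroup is in fact cyclic, so $\Cyc = \VCyc$ in this setting. The case $n=1$ is trivial because $\IZ$ itself lies in $\Cyc$, so the assembly map in question is an equivalence for tautological reasons. In the terminology of the Transitivity Principle~\ref{transitivity}, the relevant reductions to small subgroups are automatic here, and the whole content of the theorem is the geometric Farrell-Hsiang input for $n \geq 2$.

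Concretely, for each $\epsilon > 0$ I would exhibit a finite quotient $\MOR{\alpha}{\IZ^n}{F}$ together with, for every hyperelementary subgroup $\bar{H} \leq F$, an $\alpha^{-1}(\bar{H})$-equivariant map from Euclidean space $\IR^n$ (equipped with the translation $\IZ^n$-action and a suitably rescaled metric) to a simplicial complex $\Sigma_{\bar{H}}$ of bounded dimension whose isotropy groups are cyclic, and which contracts the standard generators of $\IZ^n$ to within $\epsilon$ in the target. The natural choice is $F = (\IZ/p)^n$ for a prime $p$ to be chosen as a function of $\epsilon$; because $F$ is a $p$-group, every hyperelementary subgroup of $F$ is automatically cyclic, and is generated by the image $\bar{v}$ of some primitive vector $v \in \IZ^n$.

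For such a cyclic $\bar{H} = \langle \bar{v} \rangle$, pick a complementary sublattice $L_0$ with $\IZ^n = \IZ v \oplus L_0$, so that $H := \alpha^{-1}(\bar{H}) = \IZ v \oplus p L_0$. The natural witness is to take $\Sigma_{\bar{H}} = \IR v$ with the map $\IR^n \TO \IR v$ given by projection along $L_0$. The $H$-action on $\IR v$ factors through the cyclic quotient $H/pL_0 \cong \IZ$, so the isotropy is trivial and hence cyclic, as required. The displacement of a standard generator $e_i$ under this projection is the component $\text{proj}_v(e_i)$, measured in the rescaled metric on $\IR v$ where one unit corresponds to the step of the cyclic quotient; this is small provided the direction of $v$ is nearly perpendicular to $e_i$, which can only be arranged simultaneously for all $i$ by bounding $v$ in $\ell^\infty$-norm independently of $\bar{H}$.

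The main obstacle, which is the heart of the argument, is to arrange this control uniformly over all $\sim p^{n-1}$ hyperelementary subgroups $\bar{H} \leq F$ using a single prime $p$: for each such $\bar{H}$ we need a primitive lift $v$ of a generator of $\bar{H}$ whose $\ell^\infty$-length is bounded independently of $\bar{H}$, together with a quantitative bound on the projection of the standard basis vectors onto the line $\IR v$. This is a classical Dirichlet/pigeonhole input going back to Farrell-Hsiang \cite{FH-poly}: after possibly replacing $\bar{v}$ by a unit multiple $k\bar{v}$ with $k \in (\IZ/p)^\times$, one finds a primitive lift $v$ inside a box of side length $O(p^{(n-1)/n})$, so that after rescaling by $1/p$ the worst-case displacement becomes $O(p^{-1/n})$. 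Choosing $p$ large enough that this is less than $\epsilon$ (up to a fixed constant depending only on $n$) verifies the hypothesis of the Farrell-Hsiang Criterion, and Theorem~\ref{FH-criterion} then delivers the claimed $\pi_*$-isomorphism.
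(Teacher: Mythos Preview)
Your proposal has a fatal error in the choice of finite quotient. You take $F=(\IZ/p)^n$ and assert that ``because $F$ is a $p$-group, every hyperelementary subgroup of $F$ is automatically cyclic.'' This is false: a finite group $E$ is hyperelementary if it sits in an extension $1\to C\to E\to P\to 1$ with $C$ cyclic and $|P|$ a prime power, so \emph{every} $p$-group is hyperelementary (take $C=1$). In particular $F=(\IZ/p)^n$ is itself a hyperelementary subgroup of~$F$, and hence $H=\pr^{-1}(F)=\IZ^n$ appears in~$\CH_{\pr}$. The Farrell-Hsiang Criterion would then force you to produce a $\IZ^n$-equivariant, arbitrarily contracting map from $\IZ^n$ to a simplicial complex of bounded dimension with only cyclic isotropy. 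This is exactly what Remark~\ref{impossible} shows to be impossible.

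The paper avoids this by a different choice of quotient and by an inductive reduction. First, using the inheritance property in Theorem~\ref{thm:state}\ref{i:closed}\ref{i:closed-under-extensions} applied to a surjection $\IZ^n\to\IZ^2$ (together with the Farrell-Jones Conjecture with coefficients), the general case is reduced to $n=2$. Then for $\IZ^2$ one takes $F=\IZ/pq\times\IZ/pq$ with $p,q$ distinct primes; this $F$ is \emph{not} hyperelementary, and every maximal hyperelementary subgroup $E\le F$ has order $pq^2$ or $p^2q$, hence its preimage $H$ is a proper finite-index subgroup of~$\IZ^2$. For each such $H$ one finds, via a pigeonhole argument over a box of side~$\sqrt{2p}$, a short nonzero vector $v\in\IZ^2$ with $\|v\|\le 4\sqrt{p}$ and $\ell_v(H)\subseteq p\IZ$, and uses the linear map $w\mapsto\tfrac{1}{p}\langle v,w\rangle$ to~$\IR$ as the contracting map. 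Your Dirichlet-style bound and projection idea are in the right spirit, but they only become usable once the finite quotient is chosen so that the full group does not itself appear among the hyperelementary preimages.
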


Before we get to the proof, we want to show how Theorem~\ref{thm:FJ-Zn} leads to a simple formula for the Whitehead groups of~$\IZ^n$; the article~\cite{LRosenthal} contains many similar but way more general explicit computations.
The Whitehead groups of~$G$ over~$R$ are defined as $\wh^R_k( G) = \pi_k( \Wh^R(G))$, where $\Wh^R (G) $ is the homotopy cofiber of the classical assembly map $\MOR{\asbl_1}{BG_+\sma\K(R)}{\K(R[G])}$ appearing in Conjecture~\ref{conj:FJ-tor-free-regular}.
Of course, $\wh^\IZ_1(G)=\wh(G)$.

\begin{corollary}
For any $n \geq 2$ and $k \in \IZ$ there are isomorphisms
\[
\wh_k^R( \IZ^n ) \cong \bigoplus_{C \in \MaxCyc  } \wh_k^R( C ) \cong
\bigoplus_{C \in \MaxCyc} NK_k(R) \oplus NK_k(R)
\,,
\]
where $\MaxCyc$ denotes the set of maximal cyclic subgroups of $\IZ^n$.
\end{corollary}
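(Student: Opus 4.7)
The plan is to apply Theorem~\ref{thm:FJ-Zn} to trade $\K(R[\IZ^n])$ for the source of the Farrell--Jones assembly map, then to decompose this source according to maximal cyclic subgroups, and finally to invoke Bass--Heller--Swan on each summand.

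First, by Theorem~\ref{thm:FJ-Zn} the Farrell--Jones assembly map is a $\pi_*$-isomorphism for~$\IZ^n$, so the Whitehead spectrum $\Wh^R(\IZ^n)$, defined as the homotopy cofiber of the classical assembly map $\asbl_1$, is equivalent to the homotopy cofiber of the relative assembly map $\asbl_{1\subseteq\Cyc}$ for~$\IZ^n$. Second, in the torsion-free abelian group~$\IZ^n$ every nontrivial cyclic subgroup is contained in a \emph{unique} maximal cyclic subgroup, so $\Cyc\setminus\{1\}$ partitions into equivalence classes indexed by $\MaxCyc$. Feeding this equivalence relation into the Lück--Weiermann pushout construction gives a pushout square of $\IZ^n$-CW complexes, and smashing with $\K(R[\oid{\IZ^n}{-}])$ over $\Or\IZ^n$ turns it into a homotopy pushout of spectra whose horizontal cofibers split off as a wedge
\[
\Wh^R(\IZ^n)
\;\simeq\;
\bigvee_{C\in\MaxCyc}W_C
\,,
\]
where $W_C$ is the homotopy cofiber of the relative assembly from the family~$\{1\}$ to the family of subgroups of~$\IZ^n$ contained in the maximal cyclic subgroup~$C$.

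The main task, and the step where I expect the main obstacle, will then be to identify each $W_C$ with~$\Wh^R(C)$. To carry this out I would use the direct-sum decomposition $\IZ^n=C\oplus C'$ with $C'\cong\IZ^{n-1}$: an induction argument along the inclusion $C\hookrightarrow\IZ^n$, combined with a point-set model for the relevant classifying space in which the $C'$-direction is absorbed, should reduce the relative assembly for~$\IZ^n$ to the classical assembly for~$C$ alone, so that $W_C\simeq\Wh^R(C)$.

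Finally, since $C\cong\IZ$, the Bass--Heller--Swan theorem (see the discussion following Conjecture~\ref{conj:FJ-tor-free-regular}) gives $\wh_k^R(C)\cong NK_k(R)\oplus NK_k(R)$, and passing to $\pi_k$ of the wedge splitting above yields both asserted isomorphisms.
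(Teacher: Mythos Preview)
Your overall strategy---replace $\K(R[\IZ^n])$ by the source of the Farrell--Jones map, decompose via a pushout indexed by maximal cyclic subgroups, then apply Bass--Heller--Swan---is exactly the paper's strategy. The difference lies in \emph{which} pushout you use, and this is where your argument has a genuine gap.

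The paper does not invoke L\"uck--Weiermann. Instead it writes down the explicit $\IZ^n$-equivariant homotopy pushout
\[
\begin{tikzcd}
\ds\smallcoprod_{C\in\MaxCyc}\IZ^n\timesd_C EC \arrow[r]\arrow[d] & E\IZ^n \arrow[d]\\
\ds\smallcoprod_{C\in\MaxCyc}\IZ^n\timesd_C \pt \arrow[r] & E\IZ^n(\Cyc)
\end{tikzcd}
\]
with \emph{induced} spaces in the left column. After applying $(\,?\,)_+\sma_{\Or\IZ^n}\K(R[\oid{\IZ^n}{-}])$, the induction isomorphism identifies the left vertical map directly with $\bigvee_C\bigl(BC_+\sma\K(R)\to\K(R[C])\bigr)$, i.e., with a wedge of classical assembly maps for~$C$; the cofiber is then $\bigvee_C\Wh^R(C)$ on the nose, and the known splitting of the Bass--Heller--Swan sequence finishes the argument.

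Your L\"uck--Weiermann square is different. Since $\IZ^n$ is abelian, $N_{\IZ^n}[C]=\IZ^n$, so the left column of the L\"uck--Weiermann pushout is $\coprod_C\bigl(E\IZ^n\to E(\IZ^n,\text{sub}(C))\bigr)$, the relative assembly for~$\IZ^n$ from the trivial family to the family of subgroups of~$C$. Writing $\IZ^n=C\oplus C'$ and using the model $E(\IZ^n,\text{sub}(C))\simeq EC'$, one computes that after applying the $K$-theory functor this map becomes $BC'_+\sma\bigl(BC_+\sma\K(R)\to\K(R[C])\bigr)$, so its cofiber is
\[
W_C\;\simeq\;BC'_+\sma\Wh^R(C)\,,
\]
not $\Wh^R(C)$. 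Your sentence about ``absorbing the $C'$-direction'' is precisely the point where this extra $BC'_+\simeq (T^{n-1})_+$ factor appears, and it does not go away: on homotopy groups it contributes terms involving $NK_{k-s}(R)$ for all $0\le s\le n-1$, not just $s=0$. So the step ``$W_C\simeq\Wh^R(C)$'' is not justified by your argument, and the route through L\"uck--Weiermann does not lead to the stated formula without further input. The fix is to use the paper's pushout with induced spaces, which is tailored so that the induction isomorphism applies.
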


Observe that the set of maximal cyclic subgroups of~$\IZ^n$ can be identified with $\IP^{n-1}( \IQ)$, the set of all $1$-dimensional subspaces of the $\IQ$-vector space~$\IQ^n$.

\begin{proof}
There is a $\IZ^n$-equivariant homotopy pushout square
\[
\begin{tikzcd}
\ds\smallcoprod_{C \in \MaxCyc  } \IZ^n \timesd_C EC
\arrow[r]
\arrow[d]
&
\ds E \IZ^n 
\arrow[d]
\\
\ds\smallcoprod_{C \in \MaxCyc} \IZ^n \timesd_C \pt
\arrow[r]
&
\ds E \IZ^n ( \Cyc)
\mathrlap{\,.}
\end{tikzcd}
\]
Applying $(\,?\,)_+ \sma_{\Or ( \IZ^n )} \K (R[\oid{\IZ^n}{- } ])$ preserves homotopy pushout squares, and the induced left vertical map can be identified with a wedge sum of copies of the classical assembly map~$\asbl_1$ for~$C$, using induction isomorphisms.
The homotopy cofibration sequence
\[
BC_+\sma\K(R)\cong EC_+ \sma_{\Or( C)} \K (R[\oid{C}{ - } ]) \xrightarrow{\asbl_1} \K( R[C] ) \TO \Wh^R( C)
\]
is known to split, and $Wh_n^R (C) \cong NK_n(R) \oplus NK_n(R)$; compare \cite[Section~10]{Swan-higher} and~\cite[Theorem~18.1]{WaldhausenII}.
Therefore we obtain the following homotopy pushout square.
\[
\begin{tikzcd}
\pt
\arrow[r]
\arrow[d]
&
\ds {E \IZ^n}_+ \sma_{\Or\IZ^n} \K ( R[\oid{\IZ^n}{ - } ])
\arrow[d]
\\
\ds\bigvee_{C \in\MaxCyc} \Wh^R( C)
\arrow[r]
&
\ds E \IZ^n ( \Cyc)_+ \sma_{\Or\IZ^n} \K (R[\oid{\IZ^n}{ - } ])
\end{tikzcd}
\]
Theorem~\ref{thm:FJ-Zn} identifies the bottom right corner with~$\K(R[\IZ^n])$, and therefore the homotopy cofiber of the right vertical map agrees with the homotopy cofiber of the classical assembly map for $\IZ^n$, completing the proof.
\end{proof}

Working with the Farrell-Jones Conjecture with coefficients mentioned in the proof of Theorem~\ref{thm:state}, we can use induction and reduce the proof of Theorem~\ref{thm:FJ-Zn} to the case~$n=2$, by applying the inheritance property formulated in Theorem~\ref{thm:state}\ref{i:closed}\ref{i:closed-under-extensions} to  a surjective homomorphism $\IZ^n \TO \IZ^2$.
Notice that for $\IZ^2$ itself Theorem~\ref{thm:state}\ref{i:closed}\ref{i:closed-under-extensions} is useless.

However, even in the case~$n=2$ the full proof of Theorem~\ref{thm:FJ-Zn} involves many technicalities that obscure the underlying ideas.
For this reason, we concentrate on the following partial result.

\begin{proposition} \label{prop:ass-surjective}
The assembly map
\begin{equation}
\label{asbl-K1-Z2}
\pi_1\Bigl( E\IZ^2(\Cyc)_+ \sma_{\Or G} \K(R[\oid{\IZ^2}{-}])\Bigr) \TO
K_1(R[\IZ^2])
\end{equation}
is surjective for any ring~$R$.
\end{proposition}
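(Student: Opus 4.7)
The plan is to apply the Farrell-Hsiang transfer technique in the controlled algebra framework on~$\IR^2$. A class $[\phi]\in K_1(R[\IZ^2])$ is represented by an invertible matrix $\phi\in GL_n(R[\IZ^2])$, which via the geometric module formalism I would view as a $\IZ^2$-equivariant automorphism of a finitely generated free $R$-module supported on $\IZ^2\subset\IR^2$, where $\IZ^2$ acts by translations. Since only finitely many coefficients of $\phi$ are nonzero, $\phi$ has bounded propagation~$\epsilon_0$ measured in the Euclidean metric.

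The first step is to use a transfer to reduce to arbitrarily small propagation: for each positive integer~$N$, the finite-index subgroup $N\IZ^2\subset\IZ^2$ and the corresponding $N^2$-fold self-cover of the torus $T^2=\IR^2/\IZ^2$ let us pull back $\phi$ to an automorphism which, after rescaling by~$1/N$, has propagation $\epsilon_0/N$. A character sum over the finite quotient $\IZ^2/N\IZ^2$ relates this transferred class to $[\phi]$ modulo contributions from proper cyclic subgroups, which manifestly lie in the image of~$\asbl_\Cyc$. Once the propagation is smaller than the Lebesgue number of a cover of~$T^2$ by tubular neighborhoods of closed geodesics in finitely many rational directions (corresponding to maximal cyclic subgroups of~$\IZ^2$), a standard controlled subdivision and partition-of-unity argument decomposes $[\phi]$ into a sum of classes, each represented by an automorphism supported in a tubular neighborhood of some $S^1_{C}\subset T^2$ for a cyclic subgroup $C\leq\IZ^2$. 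Each such localized automorphism is induced from~$C$, so by Morita invariance it lies in the image of the classical assembly map for~$C$, and hence in the image of~$\asbl_\Cyc$ for~$\IZ^2$.

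The main obstacle is making the decomposition step rigorous: setting up the controlled category precisely enough to justify the partition-of-unity argument at the spectrum level and to identify each supported piece with an induced class from the corresponding cyclic subgroup. This is essentially the content of the Farrell-Hsiang Criterion (Theorem~\ref{FH-criterion}) mentioned above; granted that criterion, the input needed for $G=\IZ^2$ and $\CF=\Cyc$ reduces to the elementary geometric fact that for every $\delta>0$ one can cover~$T^2$ by finitely many tubular neighborhoods of closed geodesics of width at least~$\delta$ in rational directions, which is immediate because every point of the circle of directions in $\IR^2$ is approximated by rational directions.
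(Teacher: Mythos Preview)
Your proposal has a genuine gap in the transfer step. You claim that after restricting $\phi$ to $N\IZ^2$ and rescaling, a ``character sum over the finite quotient $\IZ^2/N\IZ^2$'' relates the resulting small-propagation class back to $[\phi]$ modulo contributions from proper cyclic subgroups. This is not justified, and there is no such formula: the restricted class $\res_{N\IZ^2}^{\IZ^2}[\phi]$ (viewed via the isomorphism $N\IZ^2\cong\IZ^2$) is a norm-type expression that bears no direct relation to $[\phi]$ modulo the image of~$\asbl_\Cyc$. The correct induction-theoretic input is Swan's theorem (Theorem~\ref{thm:swan} in the paper), which expresses $[\phi]$ as a sum of terms $\ind_H^G\bigl([M_H^\pm]\cdot\res_H^G[\phi]\bigr)$ where $H$ ranges over preimages of \emph{hyperelementary} subgroups of a finite quotient. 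These $H$ are finite-index subgroups isomorphic to~$\IZ^2$, not cyclic subgroups, so the correction terms are not ``manifestly in the image of~$\asbl_\Cyc$.''

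The second step is also not what the Farrell-Hsiang Criterion asks for. The criterion requires, for each such~$H$, an $H$-equivariant contracting map to a bounded-dimensional simplicial complex with cyclic isotropy. Your proposed input---covering $T^2$ by tubes around closed geodesics and using a partition of unity---would amount to a $\IZ^2$-equivariant map from $\IZ^2$ to the nerve of the lifted cover. But Remark~\ref{impossible} in the paper proves by a counting argument that no $\IZ^2$-equivariant map to a bounded-dimensional complex with cyclic isotropy can be arbitrarily contracting; if your transfer step worked, this obstruction would be irrelevant, but since it does not, the obstruction is exactly the point. The paper's argument instead chooses the finite quotient $\IZ/pq\times\IZ/pq$ for large distinct primes $p,q$, so that every maximal hyperelementary subgroup has a ``thin'' preimage~$H$, and then uses a pigeonhole argument over~$\IF_p$ to produce a short integer vector~$v$ giving an $H$-equivariant $\sqrt{p}/4$-contracting linear map $\ell_v/p\colon\IZ^2\to\IR$ with cyclic isotropy. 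This number-theoretic construction is the actual geometric content, and nothing like it appears in your outline.
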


In the rest of this section we give a complete proof of this proposition modulo Theorem~\ref{thm:small-in-image}, which we use as a black box.
The proof is completed right after the statement of Claim~\ref{Z2-is-FH}.


\pagebreak

\subsection{Geometric modules}

The main characters of controlled algebra are defined next.

\begin{definition}[geometric modules]
\index{geometric modules}
Given a ring~$R$ and $G$-space~$X$, the category~$\CC(X) = \CC^G( X ; R)$ of \emph{geometric $R[G]$-modules over~$X$} is defined as follows.
The objects of~$\CC(X)$ are cofinite free $G$-sets~$S$ together with a $G$-map $\MOR{\varphi}{S}{X}$.
Notice that, given a cofinite free $G$-set~$S$, the $R$-module~$R[S]$ is in a natural way a finitely generated free $R[G]$-module.
The morphisms in~$\CC(X)$ from $\MOR{\varphi}{S}{X}$ to $\MOR{\varphi'}{S'}{X}$ are simply the $R[G]$-linear maps $R[S]\TO R[S']$.
\end{definition}

The category~$\CC(X)$ is additive and depends functorially on~$X$, in the sense that a $G$-map $\MOR{f}{X}{X'}$ induces an additive functor $\MOR{f_*}{\CC(X)}{\CC(X')}$ which sends the object $\varphi$ to $f \circ \varphi$.
Let $\CF(R[G])$ be the category of finitely generated free $R[G]$-modules.
The functor $\MOR{\IU}{\CC(X)}{\CF(R[G])}$ (where $\IU$ stands for underlying) that sends $\MOR{\varphi}{S}{X}$ to~$R[S]$ is obviously an equivalence of additive categories, since $\varphi$ does not enter the definition of the morphisms in~$\CC(X)$.
Therefore we obtain a $\pi_*$-isomorphism
\begin{equation} \label{forget-iso}
\begin{tikzcd}
\K(\CC(X))
\arrow[r, "\IU"', "\simeq"] &
\K(R[G])
\,.
\end{tikzcd}
\end{equation}
However, the advantage of $\CC(X)$ is that morphisms have a geometric shadow in $X$, and if $X$ is equipped with a metric we can talk about their size.

\begin{definition}[support and size]
\index{geometric modules!support and size of morphisms}
Let $\MOR{\alpha}{R[S]}{R[S']}$ be a morphism in~$\CC(X)$ from $\MOR{\varphi}{S}{X}$ to $\MOR{\varphi'}{S'}{X}$. Let $(\alpha_{s's})_{(s',s)\in S'\times S}$ be
the associated matrix.
Define the \emph{support} of~$\alpha$ to be
\[
\supp \alpha=
\SET*{\bigl(\varphi'(s'),\varphi(s)\bigr)\in X\times X}{\alpha_{s's}\neq0}
\subseteq X\times X
\,.
\]
If $X$ is equipped with a $G$-invariant metric~$d$, define the \emph{size} of~$\alpha$ to be
\[
\size \alpha =\sup
\SET*{d\bigl(\varphi'(s'),\varphi(s)\bigr)}{\alpha_{s's}\neq0}
\,.
\]
\end{definition}

\begin{figure}[h]
\centering
\includegraphics{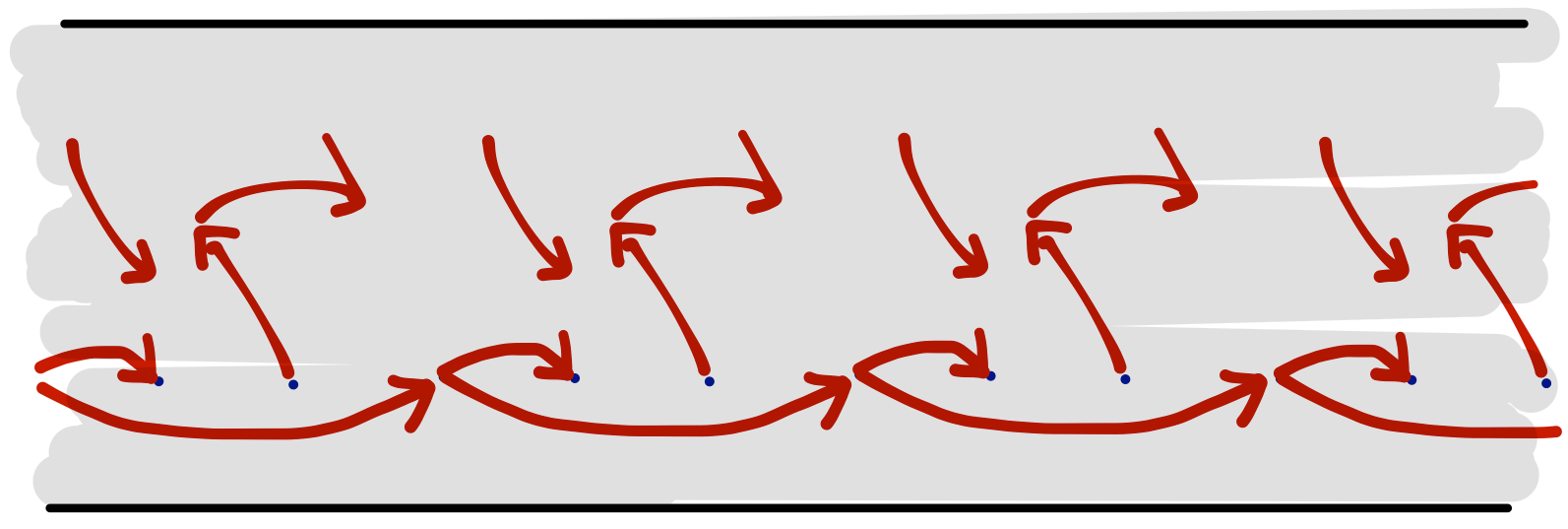}
\caption{Support of a morphism with $G=\IZ$ acting via shift on a band.}
\end{figure}

Note that the supremum is really a maximum, since $\alpha$ is $G$-equivariant, $d$ is $G$-invariant, and $S$ is cofinite.
As we will see, sometimes it is convenient to work with extended metrics, i.e., metrics for which $d(x,x') = \infty$ is allowed.
Being of finite size is then a severe restriction on~$\alpha$. In the support picture no arrow is allowed between points at distance~$\infty$; compare Figure~\ref{pic:ind} on page~\pageref{pic:ind}.

The main idea now is that assembly maps can be described as \emph{forget control} maps. Proving that an element is in the image of an assembly map can be achieved by proving that it has a representative of small size.
Before making this precise we introduce some more conventions and definitions.

Recall that a point $a$ in a simplicial complex $Z$ can be written uniquely in the form
\[
a = \sum_{v \in V} a_v v
\,,
\]
where $V$ is the set of vertices of the underlying abstract simplicial complex, $a_v \in [0,1 ]$, and $\sum_{v \in V} a_v =1$. The point $a$ lies in the interior  of the realization $\Delta_v$ of the unique  abstract simplex given by
$\{ v \; | \; a_v \neq 0 \}$.
The $l^1$-metric on Z is defined as
\[
d^1(a,b) = \sum_{v \in V} |a_v - b_v|
\,.
\]
Observe that the distance between points is always $\leq 2$, and that every simplicial automorphism is an isometry with respect to the $l^1$-metric.

\begin{theorem}[small elements are in the image]
\label{thm:small-in-image}
For any integer $n>0$ there is an $\varepsilon=\varepsilon(n)>0$ such that for every $G$-simplicial complex~$Z$ of dimension $n$ the following is true.
Let $x\in K_1(R[G])$ and consider the assembly map~$\asbl_Z$ induced by $Z\TO\pt$.
\[
\begin{tikzcd}
&
&
K_1(\CC(Z))
\arrow[d, "\cong", "\IU"']
\arrow[r, phantom, "\ni"]
&
\mathclap{[\alpha]}
\arrow[mapsto, d]
\\
\ds\pi_1\Bigl(Z_+\sma_{\Or G}\K(R[\oid{G}{-}])\Bigr)
\arrow[rr, "\asbl_Z"]
&
&
K_1(R[G])
\arrow[r, phantom, "\ni"]
&
\mathclap{x}
\end{tikzcd}
\]
Then $x\in\im(\asbl_Z)$ if there exists an automorphism~$\alpha$ in~$\CC(Z)$ with $\IU([\alpha])=x$ and
\[
\size (\alpha )    \leq\varepsilon
\AND
\size (\alpha^{-1}) \leq\varepsilon
\,.
\]
\end{theorem}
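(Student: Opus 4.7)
The plan is to identify the assembly map $\asbl_Z$ with a \emph{forget-control} map, making precise the slogan in the paper that ``assembly maps can be described as forget control maps.'' Concretely, I expect the source of~$\asbl_Z$ to be computable as $K_1$ of a full subcategory $\CC(Z)^{\CU} \subseteq \CC(Z)$ of $\CU$-\emph{controlled} geometric modules, where $\CU$ is a suitably chosen $G$-invariant open cover of~$Z$, and $\asbl_Z$ itself to be induced---via the equivalence~$\IU$ of~\eqref{forget-iso}---by the inclusion functor $\CC(Z)^{\CU} \hookrightarrow \CC(Z)$. Any automorphism of sufficiently small size will then automatically be $\CU$-controlled and hence determine a class in the source mapping to~$x$.

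I would execute this in four steps. First, choose a $G$-invariant open cover $\CU = \{U_\sigma\}_\sigma$ of~$Z$ whose Lebesgue number with respect to the $l^1$-metric admits a lower bound $\varepsilon = \varepsilon(n) > 0$ depending only on~$n$; open stars of the vertices of a sufficiently iterated barycentric subdivision of~$Z$ will do, since each closed simplex has $l^1$-diameter at most~$2$ and the local combinatorics at each vertex are bounded in terms of~$n$. Second, take $\CC(Z)^{\CU} \subseteq \CC(Z)$ to be the full subcategory keeping the same objects $\MOR{\varphi}{S}{Z}$ but restricting morphisms to those with support in $\bigcup_\sigma U_\sigma \times U_\sigma$. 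Third, establish the key identification
\[
\pi_1\bigl(Z_+\sma_{\Or G}\K(R[\oid{G}{-}])\bigr) \;\cong\; K_1(\CC(Z)^{\CU})
\]
under which $\asbl_Z$ corresponds to the composite $K_1(\CC(Z)^{\CU}) \TO K_1(\CC(Z)) \TO[\IU] K_1(R[G])$. Fourth, observe that if $\size(\alpha)$ and $\size(\alpha^{-1})$ are both~$\le\varepsilon$, then every matrix entry of~$\alpha$ and of~$\alpha^{-1}$ has its support contained in some $U_\sigma \times U_\sigma$, so $\alpha$ is an automorphism in~$\CC(Z)^{\CU}$ whose $K_1$-class there lifts~$x$.

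The main obstacle is the third step, which is where the controlled-algebra machinery really earns its keep. One natural route is to filter~$\CC(Z)^{\CU}$ along the nerve of~$\CU$: the local $K$-theory of modules concentrated on a single orbit $G \cdot U_\sigma$ should compute $\K(R[H_\sigma])$, where $H_\sigma$ is the stabilizer of~$U_\sigma$, and then a Mayer--Vietoris or descent argument along the nerve should reassemble these local pieces into the homotopy colimit description of the source given in Remark~\ref{rem:source-assembly}\ref{rem-i:hocolim}. A secondary subtlety, which is what forces the hypothesis on~$\alpha^{-1}$ in addition to~$\alpha$, is that a class in $K_1(\CC(Z)^{\CU})$ requires an honest automorphism \emph{inside} the controlled category: a morphism that is invertible in~$\CC(Z)$ but whose inverse fails to be $\CU$-controlled does not by itself define a $K_1$-cycle in~$\CC(Z)^{\CU}$.
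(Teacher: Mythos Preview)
Your approach is plausible but genuinely different from the paper's.  The paper does not attempt to model the \emph{source} of~$\asbl_Z$ by a bounded-control subcategory of~$\CC(Z)$.  Instead it works with the triple of categories $\CT^G(Z)$, $\CO^G(Z)$, $\CD^G(Z)$ introduced in Subsection~4.5 and the homotopy fibration sequence~\eqref{eq:TOD}.  Here $\CO^G(Z)$ is the \emph{obstruction category}, built from geometric modules over $Z\times[1,\infty)$ with equivariant continuous control at infinity.  Via the ladder diagram in the proof of Theorem~\ref{obstruction-category} (with $EG(\CF)$ replaced by~$Z$), the connecting map $K_2(\CD^G(Z))\to K_1(\CT^G(Z))$ is a model for~$\asbl_Z$, so $[\alpha]\in K_1(\CT^G(Z))\cong K_1(\CC(Z))$ lies in the image of~$\asbl_Z$ precisely when its image in $K_1(\CO^G(Z))$ vanishes.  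The smallness hypothesis is then fed into a \emph{squeezing lemma}, \cite[Theorem~5.3(i)]{BL-Borel}, which says that automorphisms of sufficiently small size (depending only on~$\dim Z$) represent zero in $K_1(\CO^G(Z))$.  This is where the real work is hidden, and it is treated as a black box in the paper.

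The trade-off is this: your route tries to identify the source of the assembly map directly, so that the small-size hypothesis becomes a tautological membership condition; the paper's route identifies the \emph{obstruction} to lying in the image, so that the small-size hypothesis must be turned into a vanishing result.  Your step~3 is the analogue of the paper's squeezing input, and it is just as nontrivial: a Mayer--Vietoris/descent argument for $K$-theory of controlled categories over a cover requires a Karoubi-filtration setup and careful treatment of idempotent completions, and one must also check that the comparison with the homotopy-colimit description of the source is compatible with~$\asbl_Z$.  Both approaches ultimately rely on the same underlying excision machinery for controlled algebra; the obstruction-category formulation packages it in a way that is by now standard in the Farrell--Jones literature and that generalizes cleanly to the $\pi_*$-isomorphism criterion of Theorem~\ref{obstruction-category}.
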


\begin{corollary}
\label{cor:small-in-image}
Retain the notation and assumptions of Theorem~\ref{thm:small-in-image}.
If all isotropy groups of~$Z$ belong to the family~$\CF$, then $x$ is also in the image of the assembly map
\begin{equation} \label{K1-F-asbl}
\pi_1\Bigl(EG(\CF)\sma_{\Or G}\K(R[\oid{G}{-}])\Bigr)
\xrightarrow{\asbl_{\CF}}
K_1(R[G])
\,.
\end{equation}
\end{corollary}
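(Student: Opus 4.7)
The plan is to deduce this corollary directly from Theorem~\ref{thm:small-in-image} together with the universal property of~$EG(\CF)$ and the naturality of the assembly construction from Subsection~\ref{subsec:assembly}. By Theorem~\ref{thm:small-in-image} we already know that $x\in\im(\asbl_Z)$, so the task reduces to showing that $\im(\asbl_Z)\subseteq\im(\asbl_\CF)$.

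First, I would invoke Remark~\ref{rem:source-assembly}\ref{rem-i:rel-assembly}: since every isotropy group of $Z$ lies in $\CF$, the $G$-CW structure on (a suitable subdivision of) $Z$ has all its cells of type $G/H$ with $H\in\CF$, so the defining universal property of $EG(\CF)$ yields a $G$-map $\MOR{f}{Z}{EG(\CF)}$, unique up to $G$-homotopy. Applying the functor $(-)_+\sma_{\Or G}\K(R[\oid{G}{-}])$ to the commutative $G$-diagram $Z\TO EG(\CF)\TO\pt$, and using that the projection $EG(\CF)\TO\pt$ induces $\asbl_\CF$ while the composite projection $Z\TO\pt$ induces $\asbl_Z$, I obtain the commutative triangle
\[
\begin{tikzcd}[row sep=small]
\ds Z_+\sma_{\Or G}\K(R[\oid{G}{-}])
\arrow[dr,"\asbl_Z" pos=.35, shorten <=-.5em]
\arrow[dd,"f_*"']
&
\\
& \K(R[G]) \\
\ds EG(\CF)_+\sma_{\Or G}\K(R[\oid{G}{-}])
\arrow[ur,"\asbl_\CF"' pos=.4, shorten <=.4em]
&
\end{tikzcd}
\]
of spectra (or at least of homotopy classes of maps, which is all that is needed).

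Passing to $\pi_1$, this triangle shows that $\asbl_Z=\asbl_\CF\circ\pi_1(f_*)$ as homomorphisms of abelian groups, so the image of $\asbl_Z$ is contained in the image of $\asbl_\CF$. Combining this with Theorem~\ref{thm:small-in-image}, which provides an element in~$\pi_1\bigl(Z_+\sma_{\Or G}\K(R[\oid{G}{-}])\bigr)$ mapping to $x$ under $\asbl_Z$, its image under $\pi_1(f_*)$ is a preimage of~$x$ under $\asbl_\CF$, as required.

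There is essentially no obstacle here: the argument is purely formal, relying only on naturality of the assembly map with respect to $G$-maps of the indexing space, and on the defining universal property of~$EG(\CF)$. The only subtlety worth flagging is that one must work with a point-set model of spectra (as emphasized in Subsection~\ref{subsec:assembly}) so that the triangle above commutes strictly, or at least up to a homotopy that descends to an equality on homotopy groups; this is automatic in any of the standard models used to construct $\asbl_\CF$.
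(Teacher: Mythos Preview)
Your proof is correct and follows essentially the same approach as the paper: use the universal property of~$EG(\CF)$ from Remark~\ref{rem:source-assembly}\ref{rem-i:rel-assembly} to obtain a $G$-map $Z\to EG(\CF)$, and then factor $\asbl_Z$ through~$\asbl_\CF$. The paper's version is simply more terse, omitting the explicit commutative triangle and the remark about point-set models.
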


\begin{proof}
The universal property of $EG( \CF)$ in Remark~\ref{rem:source-assembly}\ref{rem-i:rel-assembly} gives a $G$-equivariant map $Z \TO EG(\CF)$.
Hence the assembly map~$\asbl_Z$, which is induced by $Z \TO \pt$, factors over the assembly map~$\asbl_\CF$, which is induced by $EG( \CF) \TO \pt$.
\end{proof}

The sufficient condition for surjectivity on~$\pi_1$ from the preceding two results is generalized in Theorem~\ref{obstruction-category} below to a necessary and sufficient condition for assembly maps to be $\pi_*$-isomorphisms.
In Remark~\ref{rem:small-in-image} we explain how and where in the literature Theorem~\ref{thm:small-in-image} is proved.


\subsection{Contracting maps}

In view of Theorem~\ref{thm:small-in-image} and Corollary~\ref{cor:small-in-image}, a possible strategy to prove surjectivity of~$\asbl_{\CF}$ is to look for contracting maps.
This leads to the following criterion.

\begin{criterion} \label{crit:criterion}
Fix $G$, $R$, $\CF$, and a word metric $d^G$ for $G$.
Suppose that there is an~$N >0$ such that for any arbitrarily large $D >0$ there exist a simplicial complex~$Z_D$ with a simplicial $G$-action and a $G$-equivariant map
\(
\MOR{f_D}{ G/1}{ Z_D}
\)
satisfying the following conditions:
\pagebreak
\begin{enumerate}[label=(\roman*)]
\item \label{bound-dim} $\dim Z_D \leq N$;
\item \label{isotropy} all isotropy groups of $Z_D$ lie in $\CF$;
\item \label{contract} the map $f_D$ is $D$-contracting with respect to the $l^1$-metric in the target and the word metric in the source, i.e., for all~$g,g'\in G$ we have
\[
d^1 ( f_D(g) , f_D(g') ) \leq \frac{1}{D} d^G ( g,g')
\,.
\]
\end{enumerate}
Then the map~\eqref{K1-F-asbl} is surjective.
\end{criterion}

The projection map to a point always satisfies \ref{bound-dim} and \ref{contract} but not \ref{isotropy}. The $N$-skeleton of a simplicial model for $EG( \CF)$ always satisfies \ref{bound-dim} and \ref{isotropy}.
But how can we produce contracting maps $f_D$ that satisfy all three conditions?
In Remark~\ref{impossible} below we explain why the assumptions of the criterion are too strong to be useful.
Nevertheless, we spell out the proof of the criterion as a warm-up exercise.

\begin{proof}
Set $\epsilon = \min \SET{\epsilon(n)}{n \leq N}$, where $\epsilon (n)$ comes from Theorem~\ref{thm:small-in-image}.
Given any~$x\in K_1( R[G])$ consider the following diagram.
\[
\begin{tikzcd}
\ds K_1( \CC( Z_D ) )
\arrow[dr, "\IU\ "', "\cong" description]
&
K_1( \CC( G/1) )
\arrow[d, "\ \IU", "\cong" description]
\arrow[l,"{f_D}_*"']
\arrow[r, phantom, "\ni" pos=.4]
&
\mathclap{[\alpha]}
\arrow[mapsto, d]
\\
&
K_1(R[G])
\arrow[r, phantom, "\ni"]
&
\mathclap{x}
\end{tikzcd}
\]
Choose an automorphism $\alpha$ in $\CC( G/1)$ whose class $[\alpha] \in K_1( \CC( G/1))$ maps to $x$ under~$\IU$. Determine the sizes of $\alpha$ and $\alpha^{-1}$, and then choose $D$ so large that
\[
\size {f_D}_* ( \alpha ) \leq \frac{1}{D} \size  \alpha  < \epsilon
\]
and analogously for $\alpha^{-1}$.
Then Corollary~\ref{cor:small-in-image} implies that $x$ is in the image of the assembly map~$\asbl_{\CF}$ in~\eqref{K1-F-asbl}.
\end{proof}

\begin{remark} \label{impossible}
The case of Proposition~\ref{prop:ass-surjective} is when $G = \IZ^2$ and $\CF = \Cyc$.
Unfortunately, the conditions of Criterion~\ref{crit:criterion} cannot possibly be satisfied in this case.
To explain why, we need the following lemma.

\begin{lemma}
Let $s$ be a simplicial automorphism of a simplicial complex $Z$ with $\dim Z \leq N$. If $x = \sum x_v v \in Z$ is such that
\[
d^1( x , sx) < \frac{1}{(N+1)^2}
\,,
\]
then a barycenter of a face of the simplex $\Delta_x$ spanned by $V_x=\SET{v}{x_v\neq0}$ is fixed under $s$.
\end{lemma}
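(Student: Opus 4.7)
The plan is to exhibit a nonempty subset $W\subseteq V_x$ that $s$ preserves as a set: then $s$ permutes the vertices of $W$ and therefore fixes the barycenter $|W|^{-1}\sum_{w\in W}w$ of the corresponding face of~$\Delta_x$.

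First I would record the basic displacement estimate. Since $s$ is a simplicial automorphism, it induces a bijection of the vertex set and carries $x=\sum_{v\in V_x}x_v v$ to $sx=\sum_{v\in V_x}x_v\,(sv)$; in particular, the coefficient of $sv$ in $sx$ equals $x_v$ whenever $v\in V_x$. One term of $d^1(x,sx)$ is thus $|x_{sv}-(sx)_{sv}|=|x_{sv}-x_v|$, so
\[
|x_{sv}-x_v|\leq d^1(x,sx)<\frac{1}{(N+1)^2}
\qquad\text{for every }v\in V_x.
\]
In particular, if $x_v>1/(N+1)^2$ then $sv\in V_x$.

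Next I would choose $v^{*}\in V_x$ with $x_{v^{*}}$ maximal. Since $|V_x|\leq N+1$ and $\sum_{v\in V_x}x_v=1$, this gives $x_{v^{*}}\geq 1/(N+1)$. A straightforward induction, applying the displacement estimate at each step to the previous iterate (which the induction has placed inside~$V_x$), yields
\[
x_{s^{j}v^{*}}>x_{v^{*}}-\frac{j}{(N+1)^2}
\qquad\text{as long as }v^{*},sv^{*},\dots,s^{j-1}v^{*}\in V_x.
\]
With $x_{v^{*}}\geq 1/(N+1)$, the right-hand side is $\geq 0$ for every $j\leq N+1$---reaching exactly~$0$ in the extremal case $x_{v^{*}}=1/(N+1)$ and $j=N+1$---and strictness of the displacement inequalities then forces $x_{s^{j}v^{*}}>0$, i.e.\ $s^{j}v^{*}\in V_x$, for each $j=0,1,\dots,N+1$.

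The $N+2$ iterates $v^{*},sv^{*},\dots,s^{N+1}v^{*}$ therefore all sit inside the set $V_x$ of cardinality at most~$N+1$, so the pigeonhole principle supplies $0\leq i<j\leq N+1$ with $s^{i}v^{*}=s^{j}v^{*}$. The forward $s$-orbit of $v^{*}$ is then a finite, nonempty, $s$-invariant subset $W\subseteq V_x$, and its barycenter is the required fixed point. I expect the main obstacle in writing this up to be the tight bookkeeping that carries the iteration one step past the naive bound $j\leq N$: without this last step one obtains only $N+1$ iterates inside a set of size $\leq N+1$, which is not enough to force a repetition. The exponent~$2$ in $(N+1)^2$ is calibrated precisely to make this extra step succeed, through the identity $(N+1)\cdot 1/(N+1)^2=1/(N+1)$ combined with strictness of the displacement bound.
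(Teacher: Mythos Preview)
Your proof is correct and follows essentially the same approach as the paper: both track barycentric coordinates along the $s$-orbit of a vertex using the displacement estimate $|x_{sv}-x_v|<1/(N+1)^2$, and combine this with $|V_x|\leq N+1$. The paper phrases it as a contradiction (assume every orbit exits $V_x$, deduce $x_v<1/(N+1)$ for all $v\in V_x$, contradicting $\sum x_v=1$), whereas you argue directly by starting from a vertex with maximal coefficient; these are contrapositive versions of the same argument.
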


\begin{proof}
For a vertex $v$ with $x_v \neq 0$ set $A_v = \{\, v,sv,s^2v, \dotsc \} $.
If $A_v \subset V_x$ then $s$ permutes the finitely many elements in $A_v$ and in particular fixes the barycenter of the face spanned by $A_v$.

Suppose that for no vertex $v$ with $x_v \neq 0$ we have $A_v \subset V_x$. Then for all $v \in V_x$ there exists a smallest $n(v) \geq 1$ such that
$s^{n(v)}v \notin V_x$ and hence $x_{s^{n(v)}v} =0$. Since $V_x$ contains at most $N+1$ vertices we know that $n(v) \leq N+1$ for all $v \in V_x$.
Write $\epsilon = \frac{1}{(N+1)^2}$; then from
\[
d^1(x,s^{-1}x) = d^1(s^{-1}x,s^{-2}x) = \dotsb = d^1(s^{-N}x, s^{-(N+1)}x) < \epsilon
\]
and $x_{s^{n(v)}v}=0$ we conclude that
\[
x_{s^{n(v)-1} v} < \epsilon\,, \quad  x_{s^{n(v)-2} v} < 2 \epsilon\,, \quad \dotsc, \quad   x_v < n(v) \epsilon \leq (N+1) \epsilon = \frac{1}{N+1}
\,.
\]
However, since $\sum_{v \in V_x} x_v = 1$, the last inequality cannot be true for all vertices in~$V_x$.
\end{proof}

Now suppose we can arrange \ref{bound-dim} and \ref{contract} from Criterion~\ref{crit:criterion}.
If $S$ is a (very large) finite subset of $G$, then by \ref{contract} there exists a $G$-equivariant map $\MOR{f}{G/1}{Z}$ to a $G$-simplicial complex that is  contracting enough in order to have $d^1( f(1) , f(s) ) < \frac{1}{(N+1)^2}$ for all $s \in S$. The lemma implies that for each $s \in S$ a barycenter of a face of the simplex $\Delta_{f(1)}$ determined
by $f(1)$ is fixed under $s$.
Let $b(N)$ be the number of vertices in the barycentric subdivision of an $N$-simplex.
Then there exists a subset $T \subset S$ with cardinality $\# T \geq \frac{ \# S}{b(N)}$ and a point in $\Delta_{f(1)}$ fixed by all elements of~$T$.
The subgroup generated by $T$ must lie in $\CF$ if we require \ref{isotropy}. Since $S$ can be arbitrarily large it seems difficult to keep $\CF$ small.

In the case $G = \IZ^2$ we can choose
\[
S =S_l = \SET*{(x_1 , x_2)}{|x_1| \leq \frac{l}{2} \text{ and } |x_2| \leq \frac{l}{2}}
\,.
\]
Then if $l > b(N)$ we have $\#T \geq \frac{l^2}{b(N)} > l $, and since a subset of $S_l$ with more than $l$ elements cannot be contained in a line, the set $T$ generates a finite index subgroup.
Hence we can never arrange $\CF = \Cyc$ as desired, proving the claim in Remark~\ref{impossible}.
\end{remark}


\subsection{The Farrell-Hsiang Criterion}

The trick to obtain sufficiently contracting maps is to relax the requirement that the maps are $G$-equivariant, and instead only ask for equivariance with respect to (finite index) subgroups. We first illustrate this phenomenon in an example that is too simple to be useful.

\begin{example}
Consider the standard shift action of the infinite cyclic group $G= \IZ$ on the real line:
\[
\IZ \times \IR \TO \IR\,, \quad (z,x) \longmapsto z + x\,.
\]
This is a simplicial action if we consider $\IR$ as $1$-dimensional simplicial complex with set of vertices $\IZ \subset \IR$. The map
\[
f_D \colon \IZ \TO \IR\,, \quad z \longmapsto \frac{1}{D} z
\]
is $D$-contracting but not $\IZ$-equivariant. It becomes $\IZ$-equivariant if we change the action on~$\IR$ to the action given by
\[
\IZ \times \IR \TO \IR\,, \quad (z,x) \longmapsto \frac{1}{D}z + x\,.
\]
However, this action is no longer simplicial. If we restrict the action to the subgroup $D \IZ < \IZ$ or to any subgroup $H$ with $H \le D \IZ$, then the $H$-action on~$\res_H^{\IZ} \IR$ is simplicial, and
\[
f_D \colon \res_H^{\IZ} \IZ \TO \res_H^{\IZ} \IR
\]
is a $D$-contracting $H$-equivariant map.
\end{example}

Assume for a moment that for a subgroup $H \le G$ of finite index we have  an $H$-equivariant map
\[
f_D \colon \res_H^G G/1 \TO E_H
\]
to an $H$-simplicial complex~$E_H$ that is $D$-contracting with respect to a word metric in the source and the $l^1$-metric in the target. Let us see what happens when we induce up to~$G$.

If $(X,d)$ is a metric space with an isometric $H$-action, then $\ind_H^G X = G \times_H X$ has an isometric $G$-action with respect to the extended metric
\[
d( [g,x] , [g',x'] ) =
\begin{cases}
d_X ( x , g^{-1} g' x) & \text{ if\ \ $g^{-1} g' \in H$;}
\\
\infty & \text{ if\ \ $g^{-1} g' \not\in H$.}
\end{cases}
\]
Applying this to $f_D$ we obtain a map
\[
\ind_H^G f_D \colon \ind_H^G  \res_H^G G/1 \TO \ind_H^G   E_H
\]
which is still $D$-contracting.
However, observe that $\frac{1}{D} \infty = \infty$, and that a pair of points at distance~$\infty$ in the source is mapped to a pair of points still at distance~$\infty$ in the target. Hence the map can be used to diminish the size of a morphism between geometric modules only if the morphism over $\ind_H^G\res_H^G G/1$ is of finite size, i.e., only if it has no components that connect points at distance~$\infty$.

The usual induction homomorphism $\ind_H^G \colon K_1( R[H] ) \TO K_1(R[G])$ given by the functor $R[G] \tensor_{R[H]} - $ can be easily lifted to  the categories of geometric modules, i.e., for any metric space $X$ the functor
\[
\ind_H^G \colon \CC ( X) \TO \CC( \ind_H^G X )\,, \quad ( \varphi \colon S \TO X ) \longmapsto ( \ind_H^G \varphi \colon \ind_H^G S \TO \ind_H^G X )
\]
induces the upper horizontal map in the following commutative diagram.
\[
\begin{tikzcd}
\ds K_1( \CC ( X ))
\arrow[r, "\ind_H^G"]
\arrow[d, "\cong", "\IU"']
&
K_1( \CC ( \ind_H^G X ))
\arrow[d, "\cong", "\IU"']
\\
K_1 ( R[H] )
\arrow[r, "\ind_H^G"]
&
K_1(R[G])
\end{tikzcd}
\]
If $X$ is a metric space in the usual sense (where $\infty$ is not allowed), then morphisms in the image of $\ind_H^G$ have the desired property: the size of $\ind_H^G \alpha$ is finite even though $\ind_H^G X$ is a metric space in the extended sense.
Moreover
\[
\size \ind_H^G \alpha = \size \alpha\,.
\]
Therefore, using the map $\ind_H^G f_D$ we can hope to show that, maybe not arbitrary elements, but at least elements of the form $\ind_H^G [\beta]$ belong to the image of $\asbl_{\CF}$.

\begin{figure}[h]
\centering
\includegraphics[width=.75\textwidth]{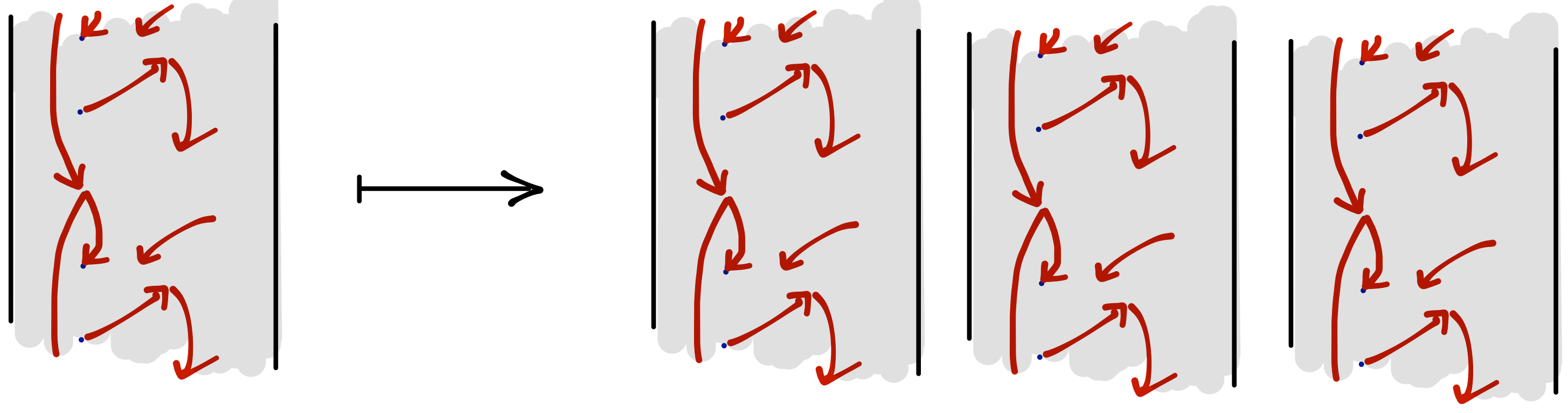}
\caption{Support of $\alpha$  and $\ind_H^G \alpha$ in an index $3$ situation.}
\label{pic:ind}
\end{figure}

The reason why this is useful is the following theorem of Swan. Recall that a finite group~$E$ is called hyperelementary if it fits into a short exact sequence
\(
1\TO C\TO E\TO P\TO 1
\)
where $C$ is cyclic and the order of $P$ is a prime power.

\begin{theorem}[Swan induction] \label{thm:swan}
Let $F$ be a finite group, $\MOR{\pr}{G}{F}$ a surjective homomorphism, and
\[
\CH_{\pr}=\SET*{\pr^{-1}(E)}{\text{$E$ is a hyperelementary subgroup of~$F$}}
\,.
\]
Then for every $H\in\CH_{\pr}$ there exist $\IZ [H]$-modules $M_H^+$ and $M_H^-$ that are finitely generated free as $\IZ$-modules, and such that,
for each $n\in\IZ$ and each $x\in K_n(R[G])$, we have
\begin{equation} \label{swan-formula}
x=\sum_{H\in\CH_p}  \ind_H^G \left( [M_H^+] \cdot \res_H^G x \right) - \ind_H^G \left( [M_H^-] \cdot \res_H^G x \right)
.
\end{equation}
\end{theorem}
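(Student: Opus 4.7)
The plan is to bootstrap Swan's classical induction theorem for integral representations of the finite group~$F$ up to an identity in $K_n(R[G])$, using the action of the Grothendieck group of $\IZ$-free $\IZ[G]$-modules on~$K_n(R[G])$ together with the projection formula relating induction and restriction.

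First I would invoke Swan's induction theorem for~$F$: there exist finitely generated $\IZ$-free $\IZ[E]$-modules $N_E^\pm$, one pair for each hyperelementary subgroup $E\le F$, such that
\[
[\IZ] \;=\; \sum_{E}\bigl(\ind_E^F[N_E^+] - \ind_E^F[N_E^-]\bigr)
\]
holds in the Grothendieck group of finitely generated $\IZ[F]$-modules that are free over~$\IZ$, where $[\IZ]$ denotes the class of the trivial module. Crucially the $N_E^\pm$ depend only on~$F$, not on $R$, $G$, $n$, or~$x$. Next, I would inflate this identity along~$\pr$: setting $M_H^\pm := \pr^* N_E^\pm$ for $H = \pr^{-1}(E)\in\CH_{\pr}$—which remains finitely generated free over~$\IZ$ because pullback along~$\pr$ does not alter the underlying abelian group—the Mackey-type compatibility $\pr^*\ind_E^F\cong\ind_H^G\pr^*$ (valid since $\pr$ is surjective and hence $G/H\TO F/E$ is a bijection) yields
\[
[\IZ] \;=\; \sum_{H\in\CH_{\pr}}\bigl(\ind_H^G[M_H^+] - \ind_H^G[M_H^-]\bigr)
\]
in the corresponding Grothendieck group for~$G$.

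Finally I would use two formal properties of algebraic $K$-theory. First, for every $n\in\IZ$ the group $K_n(R[G])$ is a module over the Grothendieck group of $\IZ$-free $\IZ[G]$-modules via $[M]\cdot y = (M\otimes_\IZ-)_*(y)$ with diagonal $G$-action; this pairing is well-defined because tensoring with a $\IZ$-flat module is exact, and the trivial module acts as the identity so that $x = [\IZ]\cdot x$. Second, the projection formula
\[
\ind_H^G[M_H^\pm]\cdot x \;=\; \ind_H^G\bigl([M_H^\pm]\cdot\res_H^G x\bigr)
\]
follows from the natural $R[G]$-module isomorphism $\ind_H^G M \otimes_\IZ P \cong \ind_H^G(M\otimes_\IZ\res_H^G P)$ with diagonal actions. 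Combining $x = [\IZ]\cdot x$ with the inflated Swan identity of the previous paragraph and applying the projection formula produces~\eqref{swan-formula}. The main obstacle is Swan's theorem itself, a deep result in the integral representation theory of finite groups; the remaining steps are formal manipulations with induction, restriction, and inflation.
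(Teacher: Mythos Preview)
Your proposal is correct and follows essentially the same route as the paper: invoke Swan's identity $[\IZ]=\sum_E\ind_E^F[N_E^\pm]$ in the Swan group of~$F$, inflate along~$\pr$ using the compatibility $\pr^*\ind_E^F\cong\ind_H^G\pr^*$ to define $M_H^\pm=\pr^*N_E^\pm$, and then apply the Swan-group action on $K_n(R[G])$ together with the projection formula $\ind_H^G[M]\cdot x=\ind_H^G([M]\cdot\res_H^G x)$. The paper's proof uses the same ingredients in the same order, with $\res_{\pr}$ in place of your~$\pr^*$.
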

Here, for $y \in K_n(R[H])$ and a $\IZ [H]$-module $M$ which is finitely generated free as a $\IZ$-module, we write $[M]\cdot y = l_M ( y)$ for the image of $y$ under the map induced in $K$-theory by the functor $l_{M}$ that sends the $R [H]$-module $P$ to $M \tensor_{\IZ} P$ equipped with the diagonal $H$-action.
\begin{proof}
The Swan group $Sw( H ; \IZ)$ is by definition the $K_0$-group of $\IZ H$-modules that are finitely generated free as $\IZ$-modules. The relation is the usual additivity relation for (not necessarily split) short exact sequences.
Tensor products over $\IZ$ equipped with the diagonal $H$-actions induce the structure of a unital commutative ring on $Sw( H; \IZ)$, and also define an action of $Sw ( H; \IZ)$ on $K_n ( R[H])$. Swan showed in~\cite{Swan}
that for a finite group $F$ there exist $\IZ[E]$-modules $N_E^+$ and $N_E^-$, where $E$ runs through all hyperelementary subgroups of $F$, such that in $Sw( F ; \IZ)$ we have
\begin{equation} \label{swan-formula-original}
1 = [ \IZ ] = \sum_{E}  \ind_E^F [N_E^+]  - \ind_E^F [N_E^-]
\,.
\end{equation}
The natural isomorphisms
\[
\ind_H^G\Bigl( M \tensor_\IZ \res_H^G P\Bigr) \stackrel{\cong}{\to} \bigl(\ind_H^G M \bigr)\tensor_{\IZ} P
\aND
\ind_{\pr^{-1}(E)}^G  \res_{\pr} N \stackrel{\cong}{\to}  \res_{\pr} \ind_E^F N
\,,
\]
given by $g \otimes m \otimes p \mapsto g \otimes m \otimes gp$ and $g \otimes n \mapsto \pr(g) \otimes n$, respectively, yield the following identity in $K_n( R[G] )$ for $H = \pr^{-1}(E)$:
\[
\bigl( \res_{\pr} \ind_E^F [N]\bigr) \cdot x = \bigl( \ind_H^G [\res_{\pr} N] \bigr) \cdot x= \ind_H^G \bigl( [\res_{\pr} N] \cdot \res_H^G  x\bigr)
\,.
\]
Using this and  $\res_{\pr} 1 = [ \res_{\pr} \IZ ] = [\IZ] = 1$ one derives the statement in the theorem with $M_{\pr^{-1}(E)} = \res_{\pr} N_E$ from \eqref{swan-formula-original}.
\end{proof}

If we want to use $H$-equivariant contracting maps, as explained above, to show that each of the summands in~\eqref{swan-formula} is in the image of $\asbl_{\CF}$, we need to control the size of a geometric representative of $[M_H^{\pm}] \cdot \res_H^G x $ in terms of the size of a representative of $x$.

This is indeed easy. Similarly to induction, also the functors restriction $\res_H^G$ and $l_{M} = M \tensor_{\IZ} - $  can be lifted to categories of geometric modules. For restriction simply send the object given by $\phi \colon S \TO Z$ to $\res_H^G \phi \colon \res_H^G S \TO \res_H^G Z$. For~$l_{M}$ observe that if $B$ is a finite $\IZ$-basis for the $\IZ[H]$-module $M$, then there are isomorphisms of $\IZ[H]$-modules
\begin{equation} \label{diag-vs-right}
\IZ[ B ] \tensor_{\IZ} R[ \smallcoprod H/1 ] \TO[\cong] \IZ[ B] \tensor_{\IZ} R[ \smallcoprod H/1 ] \TO[\cong]  R[B \times \smallcoprod H/1 ]
\,.
\end{equation}
Here the first isomorphism is given by $m \otimes h \mapsto h^{-1}m \otimes h$, where in the source one uses the diagonal $H$-action, and in the target the $H$-action on the right tensor factor. The second isomorphism is the obvious one. One constructs the desired functor by working only with objects of the form $\phi \colon \coprod H/1 \TO Z$ and sending such a $\phi$ to $\phi \circ \pr$, where $\pr \colon B \times \coprod H/1 \TO \coprod H/1$ is the projection onto the second factor. The behaviour on morphisms is determined by the isomorphism~\eqref{diag-vs-right}: one defines $l_M \alpha$ between the objects on the right in~\eqref{diag-vs-right} in such a way that on the left it corresponds to~$\id \otimes \alpha$. One then checks easily that
\[
\size \res_H^G \alpha  = \size \alpha \quad \mbox{ and } \quad \size l_{M} \alpha =  \size \alpha
\,.
\]
In summary, given a finite index subgroup $H \le G$, a $\IZ[H]$-module $M$ that is finitely generated free as a $\IZ$-module, and an $H$-equivariant $D$-contracting map $f_D \colon \res_H^G G/1 \TO Z$ to an $H$-simplicial complex, we have a commutative diagram
\begin{equation}
\label{huge-diagram}
\adjustbox{width=\textwidth,keepaspectratio}%
{\begin{tikzcd}
{[ \alpha ]}
\arrow[d, phantom, "\in"]
\arrow[mapsto, rr, shorten >= 6.3em]
&
&
\mathclap{\left[ (\ind_H^G f_D)_* ( \ind_H^G l_M \res_H^G \alpha ) \right]}
\arrow[r, phantom, "\in" pos=.825]
&
K_1( \CC ( \ind_H^G Z))
\\
K_1( \CC( G/1) )
\arrow[r, "\res_H^G"]
\arrow[d, "\cong", "\IU"']
&
K_1( \CC ( \res^G_H G/1 ))
\arrow[r, "l_M"]
\arrow[d, "\cong", "\IU"']
&
K_1( \CC( \res_H^G G/1 ))
\arrow[r, "\ind_H^G"]
\arrow[d, "\cong", "\IU"']
&
K_1( \CC ( \ind_H^G \res_H^G G/1 ))
\arrow[d, "\cong", "\IU"']
\arrow[u, "(\ind_H^G f_D)_*"]
\\
K_1( R[G] )
\arrow[r, "\res_H^G"]
&
K_1( R[H] )
\arrow[r, "l_M"]
&
K_1( R[H] )
\arrow[r, "\ind_H^G"]
&
K_1( R[G] )
\end{tikzcd}}
\end{equation}
and the estimate
\begin{equation}\label{estimate-huge}
\size \left( (\ind_H^G f_D)_* ( \ind_H^G l_M \res_H^G \alpha ) \right) \leq \frac{1}{D} \size
\left( \ind_H^G l_M \res_H^G \alpha \right) = \frac{1}{D}  \size \alpha < \infty
\,.
\end{equation}

In order to prove surjectivity of $\asbl_{\CF}$ it  remains to find suitable finite quotients $\pr \colon G \TO F$ and  suitable $H$-equivariant contracting maps for each~$H \in \CH_{\pr}$.
This leads to the criterion formulated in Theorem~\ref{FH-criterion} below for arbitrary groups~$G$.
Groups~that meet this criterion have been named Farrell-Hsiang groups in~\cite{BL-FH}.


\subsection{\texorpdfstring{$\IZ^2$}{ZxZ} is a Farrell-Hsiang group}

Now we concentrate on the concrete situation where $G = \IZ^2$, and explain how the criterion is met in this special case.

\begin{claim}[$\IZ^2$ is a Farrell-Hsiang group with respect to $\Cyc$] \label{Z2-is-FH}
\index{theorem!Farrell-Hsiang Criterion!special case of Z2@special case of~$\IZ^2$}
Fix a word metric $d^{\IZ \times \IZ}$ on~$\IZ\times\IZ$. Consider~$\IR$ as a simplicial complex with vertices~$\IZ\subset\IR$ and with the corresponding $\ell^1$-metric~$d^1$.
For any arbitrarily large~$D>0$ there exists a surjective homomorphism $\MOR{\pr_D}{\IZ\times\IZ}{F}$ to a finite group~$F$ with the following property.
For each
\[
H\in\CH_{\pr_D}=\SET*{\pr_D^{-1}(E)}{\text{$E$ is a hyperelementary subgroup of $F$}}
\]
there exist:
\begin{enumerate}[label=(\roman*)]
\item
a simplicial $H$-action on~$\IR$ with only cyclic isotropy,
\item
a map
\(
\MOR{f_H}{\res_H (\IZ\times\IZ)}{\IR}
\)
that is $H$-equivariant and $D$-contracting, i.e.,
\begin{equation}
\label{eq:contracting}
d^1(f_H(g),f_H(g'))
\leq
\tfrac{1}{D}
\,
d^{\IZ\times\IZ}(g,g')
\end{equation}
for all $g,g'\in\IZ\times\IZ$.
\end{enumerate}
\end{claim}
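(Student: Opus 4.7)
The plan is to choose a quotient $\pr_D$ whose target $F$ is \emph{not} itself hyperelementary, which will force every $H \in \CH_{\pr_D}$ to be a proper sublattice of $\IZ^2$ of large index; then each $f_H$ is defined as the restriction to $\IZ^2$ of a nonzero real-linear form $\tilde\rho_H \colon \IR^2 \TO \IR$ that is integer-valued on $H$, with $H$-action on $\IR$ given by integer translations. The key geometric input is Minkowski's theorem for short vectors in a dual lattice.

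First I would take two distinct primes $p_1, p_2$, chosen sufficiently large depending on $D$ (as quantified below), and let
\[
\pr_D \colon \IZ \times \IZ \TO F := (\IZ/p_1 p_2)^2 \cong (\IZ/p_1)^2 \times (\IZ/p_2)^2
\]
be the canonical quotient. Since $F$ is abelian, a subgroup $E \le F$ is $p$-hyperelementary only for $p \in \{p_1, p_2\}$, in which case $E = A_1 \times A_2$ with $A_j \le (\IZ/p_j)^2$ and, when $p = p_i$, the factor $A_{3-i}$ must be cyclic (so of order $1$ or $p_{3-i}$). In particular $F$ itself is \emph{not} hyperelementary, and the largest hyperelementary subgroup of $F$ has index $\min(p_1, p_2)$ in $F$. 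Consequently $[\IZ^2 : H] \ge \min(p_1, p_2)$ for every $H \in \CH_{\pr_D}$.

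Given such $H$, consider the dual lattice
\[
L_H = \SET{\phi \in (\IR^2)^*}{\phi(H) \subseteq \IZ},
\]
which has covolume $1/[\IZ^2 : H]$ in $(\IR^2)^*$ (if $H = M\IZ^2$ then $L_H = \IZ^2 M^{-1}$). Applying Minkowski's theorem to the $\ell^\infty$-ball in $(\IR^2)^*$ produces a nonzero element $\tilde\rho_H \in L_H$ with $\|\tilde\rho_H\|_{\ell^\infty} \le 1/\sqrt{[\IZ^2 : H]} \le 1/\sqrt{\min(p_1, p_2)}$. Define $f_H := \tilde\rho_H|_{\IZ^2} \colon \IZ^2 \TO \IR$ and let $H$ act on $\IR$ by simplicial integer translations via the homomorphism $\rho_H := \tilde\rho_H|_H \colon H \TO \IZ$. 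Then $f_H(h+v) = \rho_H(h) + f_H(v)$ shows $H$-equivariance; since $\tilde\rho_H \ne 0$, the kernel of $\rho_H$ has rank $1$ in $\IZ^2$, hence is infinite cyclic, and it coincides with the isotropy of every point of $\IR$, so the isotropy condition is satisfied. For the contraction estimate, combine the bound $d^1(a, b) \le 2|a-b|$ on $\IR$, the dual-pairing inequality $|\tilde\rho_H(v)| \le \|\tilde\rho_H\|_{\ell^\infty}\,\|v\|_{\ell^1}$, and the bilipschitz equivalence of $\|\cdot\|_{\ell^1}$ on $\IZ^2$ with the fixed word metric $d^{\IZ\times\IZ}$, to conclude
\[
d^1\bigl(f_H(g), f_H(g')\bigr) \le \frac{2K}{\sqrt{\min(p_1, p_2)}}\, d^{\IZ\times\IZ}(g, g'),
\]
for a constant $K$ independent of $D$ and $H$; choosing $p_1, p_2 > (2KD)^2$ makes $f_H$ satisfy \eqref{eq:contracting}.

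The main obstacle is the hyperelementary classification step: the whole strategy collapses unless $F$ itself fails to be hyperelementary, because otherwise $H = \IZ^2$ would appear in $\CH_{\pr_D}$, and as shown in Remark~\ref{impossible} no $\IZ^2$-equivariant map from $\IZ^2$ to a $1$-dimensional complex with cyclic isotropy can be arbitrarily contracting. Taking $F$ to be a product with two distinct prime divisors and non-cyclic Sylow subgroups at both primes is exactly what rules this out; once this is arranged, Minkowski's theorem makes the construction of the maps $f_H$ essentially formal.
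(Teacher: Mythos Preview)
Your proof is correct and follows the same overall strategy as the paper: the same quotient $F=(\IZ/p_1p_2)^2$, the same observation that every hyperelementary subgroup has index at least $\min(p_1,p_2)$, and the same construction of~$f_H$ as the restriction of a short nonzero linear functional that is integer-valued on~$H$, with $H$ acting on~$\IR$ by the resulting integer translations. The one genuine difference is how the short functional is found. The paper exploits the explicit mod-$p$ description of~$H$ (its image in~$\IF_p\times\IF_p$ is at most a line $\IF_p\cdot\overline u$) and runs a direct pigeonhole count in the box $\{|x_i|\le\sqrt{2p}\}$ to produce $v\in\IZ^2$ with $\langle v,H\rangle\subseteq p\IZ$ and $\|v\|\le4\sqrt p$; then $f_H(w)=\tfrac1p\langle v,w\rangle$. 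You instead apply Minkowski's theorem to the dual lattice~$L_H$ and use only the covolume bound coming from $[\IZ^2:H]\ge\min(p_1,p_2)$. Your packaging is a bit cleaner, since it avoids the case analysis on the arithmetic of~$E\le F$ and shows that only the index of~$H$ matters; on the other hand, the paper's pigeonhole is exactly the standard proof of Minkowski's bound in this two-dimensional situation, so the two arguments are essentially the same underneath.
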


We first show that this implies Proposition~\ref{prop:ass-surjective}.

\begin{proof}[Proof of Proposition~\ref{prop:ass-surjective}]
The simplicial complex $\IR$ is $1$-dimensional.
Let $\epsilon = \epsilon (1)$ be as in Theorem~\ref{thm:small-in-image}. Given $x \in K_1( R[G])$ choose an automorphism $\alpha$ in $\CC( G/1)$ such that $[\alpha]$ maps to $x$ under the forgetful map $\IU \colon K_1( \CC( G/1)) \TO K_1( R[G])$.
Choose $D >0$ so large that $\frac{1}{D} \max \{\size (\alpha), \size (\alpha^{-1}) \} \leq \epsilon$.
Use Claim~\ref{Z2-is-FH} in order to find a finite quotient $\pr_D \colon \IZ \times \IZ \TO F$ and $H$-equivariant $D$-contracting maps $f_H \colon \res_H^G G/1 \TO \IR$ for every $H \in \CH_{\pr_D}$.

For each $H \in \CH_{\pr_D}$, let $M=M_{H}^{\pm}$ be as in Theorem~\ref{thm:swan}, and send $[ \alpha ]$ through the upper row in diagram \eqref{huge-diagram}.
Use estimate~\eqref{estimate-huge} to conclude that
\[
\size \left( (\ind_H^G f_H)_*(\ind_H^G l_M \res_H^G \alpha) \right) \leq \epsilon
\,.
\]
By Corollary~\ref{cor:small-in-image} and the commutativity of \eqref{huge-diagram}, we see that $\ind_H^G l_M \res_H^G x$ is in the image of the map~\eqref{asbl-K1-Z2}.
Because of the decomposition~\eqref{swan-formula} in Theorem~\ref{thm:swan}, also $x$ is in the image.
\end{proof}

\begin{proof}[Proof of Claim~\ref{Z2-is-FH}]
We begin with some simplifications.
With respect to the standard generating set~$\{(\pm1,0),(0,\pm1)\}$, the word metric is Lipschitz equivalent to the Euclidean metric on~$\IZ\times\IZ\subset\IR\times\IR$.
On~$\IR$ the simplicial $\ell^1$ metric and the Euclidean metric satisfy
\[
d^1(x,y)\le C\,d^\Eucl(x,y)
\]
for some fixed constant~$C$.
Therefore it is enough to establish~\eqref{eq:contracting} with respect to the Euclidean metrics on~$\IZ\times\IZ\subset\IR\times\IR$ and on~$\IR$, instead of the word and $\ell^1$-metrics.
Moreover, it is enough to consider only maximal hyperelementary subgroups of~$F$, because then for any $H'<H$ we can take $f_{H'}=\res_{H'}f_H$.

Let us start to look for suitable finite quotients~$F$ of~$\IZ\times\IZ$.
If $F$ itself were hyperelementary, then we would have to find a contracting map~$f_{\IZ\times\IZ}$ to a $(\IZ\times\IZ)$-simplicial complex with cyclic isotropy that is $(\IZ\times\IZ)$-equivariant.
But in Remark~\ref{impossible} we saw that this is impossible.

Every finite quotient~$F$ of~$\IZ\times\IZ$ is isomorphic to~$\IZ/a\times\IZ/ab$, which is hyperelementary if and only if $a$ is a prime power.
Hence a simple choice of~$F$ which is not itself hyperelementary is $\IZ/pq\times\IZ/pq$ for distinct primes $p$ and~$q$.
In order to achieve the contracting property we will later choose the primes to be very large.

Let $\MOR{\pr_{pq}}{\IZ\times\IZ}{\IZ/pq\times\IZ/pq}$ be the projection.
A maximal hyperelementary subgroup~$E$ of~$\IZ/pq\times\IZ/pq$ has order $pq^2$ or~$p^2q$.
By symmetry it is enough to consider the case where the order of~$E$ is~$pq^2$.
Let
\(
H = \pr_{pq}^{-1}(E)
\).
Now we need to construct~$f_H$.

For every $v\in\IZ\times\IZ$ with $v\neq0$, consider the map
\[
f_v\colon
\IZ\times\IZ
\xrightarrow{\ell_v=\langle v,-\rangle}
\IZ
\xrightarrow{\quad-/p\quad}
\IR
\,, \quad w \longmapsto \frac{1}{p} \langle v,w \rangle
\]
where $\langle-,-\rangle$ is the standard inner product on~$\IR^2$.
If we equip~$\IR$ with the $(\IZ\times\IZ)$-action given by
\[
(\IZ\times\IZ)\times\IR\TO\IR
\,,\qquad
(w,x)\longmapsto x+\tfrac{1}{p}\langle v,w\rangle
\]
then $f_v$ is $(\IZ\times\IZ)$-equivariant.
More importantly, we have that:
\begin{enumerate}[label=(\Alph*)]
\item \label{i:contracting}
$f_v$ is $p/\|v\|$-contracting, i.e.,
\(
|f_v(w)-f_v(w')|\leq\tfrac{\|v\|}{p}\|w-w'\|
\).
This follows immediately from the linearity of~$f_v$ and the Cauchy-Schwarz inequality.

\item \label{i:cyclic-isotropy}
The isotropy group at every point of~$\IR$ is $\ker(\ell_v)=\SET{w\in\IZ\times\IZ}{\langle v,w\rangle=0}$, and hence cyclic since we assumed that~$v\neq0$.

\item \label{i:simplicial}
The action restricts to a simplicial $H$-action if $\ell_v(H)\subseteq p\IZ$.
\end{enumerate}
Let us reformulate the last condition.
Consider the following commutative diagram.
\begin{equation}
\label{eq:H-E-diagram}
\begin{tikzcd}
H=\pr_{pq}^{-1}(E)
\arrow[r, hook]
\arrow[d, two heads]
&
\IZ\times\IZ
\arrow[rr, "\ell_v"]
\arrow[d, two heads, "\pr_{pq}"]
&&
\IZ
\arrow[dd, two heads]
\\
E
\arrow[r, hook]
\arrow[d, two heads]
&
\IZ/pq\times\IZ/pq
\arrow[d, two heads, "\pr"]
\\
\pr(E)=\IF_p\!\cdot\overline{u}
\arrow[r, hook]
&
\IF_p\times\IF_p
\arrow[rr, "\overline{\ell_v}=\ell_{\overline{v}}"']
&&
\IF_p
\end{tikzcd}
\end{equation}
Here $\overline{u}\in\IF_p\times\IF_p$ is a generator of the $\IF_p$-vector space~$\pr(E)<\IF_p\times\IF_p$.
Observe that $\pr(E)\neq\IF_p\times\IF_p$ because the order of~$E$ is~$pq^2$.
Then the last condition above is equivalent to saying that the composition in diagram~\eqref{eq:H-E-diagram} from~$H$ to~$\IF_p$ is trivial, i.e., that $\ell_{\overline{v}}(\overline{u})=0$.

Hence, if we can find a vector $v\in\IZ\times\IZ$ such that
\begin{equation}
\label{eq:conditions-on-v}
0<\|v\|\leq4\sqrt{p}
\AND
\ell_{\overline{v}}(\overline{u})=0
\,,
\end{equation}
then from \ref{i:contracting} we get that $f_v$ is a $(\,p/4\sqrt{p}=\!\sqrt{p}/4\,)$-contracting $H$-equivariant map to~$\IR$, where~$\IR$ is equipped with a simplicial $H$-action  by~\ref{i:simplicial} and has cyclic isotropy by~\ref{i:cyclic-isotropy}.

The existence of such a vector~$v$ is established by the following counting argument.
Consider the set
\[
S=\SET*{v=(x_1,x_2)\in\IZ\times\IZ}{|x_1|\leq\sqrt{2p}\text{ and }|x_2|\leq\sqrt{2p}}
.
\]
This set has more than~$p$ elements, and therefore the map
\[
S\TO\IF_p
\,,\qquad
v\longmapsto\ell_{\overline{v}}(\overline{u})
\]
is not injective, where $\overline{u}$ was defined right after diagram~\eqref{eq:H-E-diagram}.
If $v_0$ and~$v_1$ are two distinct vectors in~$S$ with
\(
\ell_{\overline{v_0}}(\overline{u})
=
\ell_{\overline{v_1}}(\overline{u})
\),
then $v=v_0-v_1$ is a vector which satisfies the equality in~\eqref{eq:conditions-on-v}.
For the inequality in~\eqref{eq:conditions-on-v}  we estimate
\[
\|v\|\leq\|v_0\|+\|v_1\|\leq2\sqrt{2}\sqrt{2p}=4\sqrt{p}
\,.
\]
So we define~$f_H=f_v$ for such a~$v$ and finish the argument using Euclid's Theorem: since there are infinitely many primes, for any given~$D>0$ we can find distinct primes $p$ and~$q$ such that both
\(
\sqrt{p}/4\geq D
\)
and
\(
\sqrt{q}/4\geq D
\),
and hence for every
\[
H\in\CH_{\pr_{pq}}=\SET*{\pr_{pq}^{-1}(E)}{\text{hyperelementary $E<\IZ/pq\times\IZ/pq$}}
\]
the map $f_H$ is $D$-contracting.
\end{proof}


\subsection{The Farrell-Hsiang Criterion (continued)}

We now indicate how the ideas developed in this section can be used to prove isomorphism results in all dimensions instead of just surjectivity results for $K_1$.
In~\cite{BLR-Inventiones} the authors introduce, for an arbitrary  $G$-space $X$, the additive categories $\CT^G(X)$, $\CO^G(X)$, and $\CD^G(X)$, and establish in \cite[Lemma~3.6]{BLR-Inventiones} a homotopy fibration sequence
\begin{equation} \label{eq:TOD}
\K( \CT^G(X) ) \TO
\K( \CO^G(X) ) \TO
\K( \CD^G(X) )
\,.
\end{equation}
The category $\CT^G( X)$ is a variant of the category denoted $\CC ( X)$ in this section.
The functor $X \longmapsto \K( \CD^G( X) )$ is a $G$-equivariant homology theory on $G$-CW~complexes \cite[Section~5]{BFJR}, and the value at $G/H$ is $\pi_*$-isomorphic to $\Sigma \K( R[H] )$ \cite[Section~6]{BFJR}. Therefore the general principles in \cite{Weiss-Williams} and~\cite{Davis-L} identify the map
\[
\K ( \CD^G( EG( \CF) ) ) \TO \K ( \CD^G( \pt ) )
\]
with the (suspended) assembly map $\asbl_{\CF}$.

A variant of the category $\CO^G( X)$ can be defined as follows.
Objects are $G$-equivariant maps $\varphi \colon S \TO X \times [1, \infty)$, where now the free $G$-set $S$ is allowed to be cocountable instead of only cofinite.
Moreover we require that
$\varphi^{-1}( X \times [1,N] )$ is cofinite for every $N$.

A morphism $\alpha$ from $\varphi$ to $\varphi^{\prime}$ is again an $R[G]$-linear map $\alpha \colon R[S] \TO R[S']$,
but now there is a severe restriction on the support of a morphism: towards $\infty$ the arrows representing non-vanishing components must become smaller and smaller. Notice though that $X$ is only a topological and  not a metric space, and ``small'' has no immediate meaning. We refer to \cite[Definition~2.7]{BFJR} 
for the precise definition of this condition, which is known as \emph{equivariant continous control at infinity}.


\begin{figure}[h]
\centering
\includegraphics{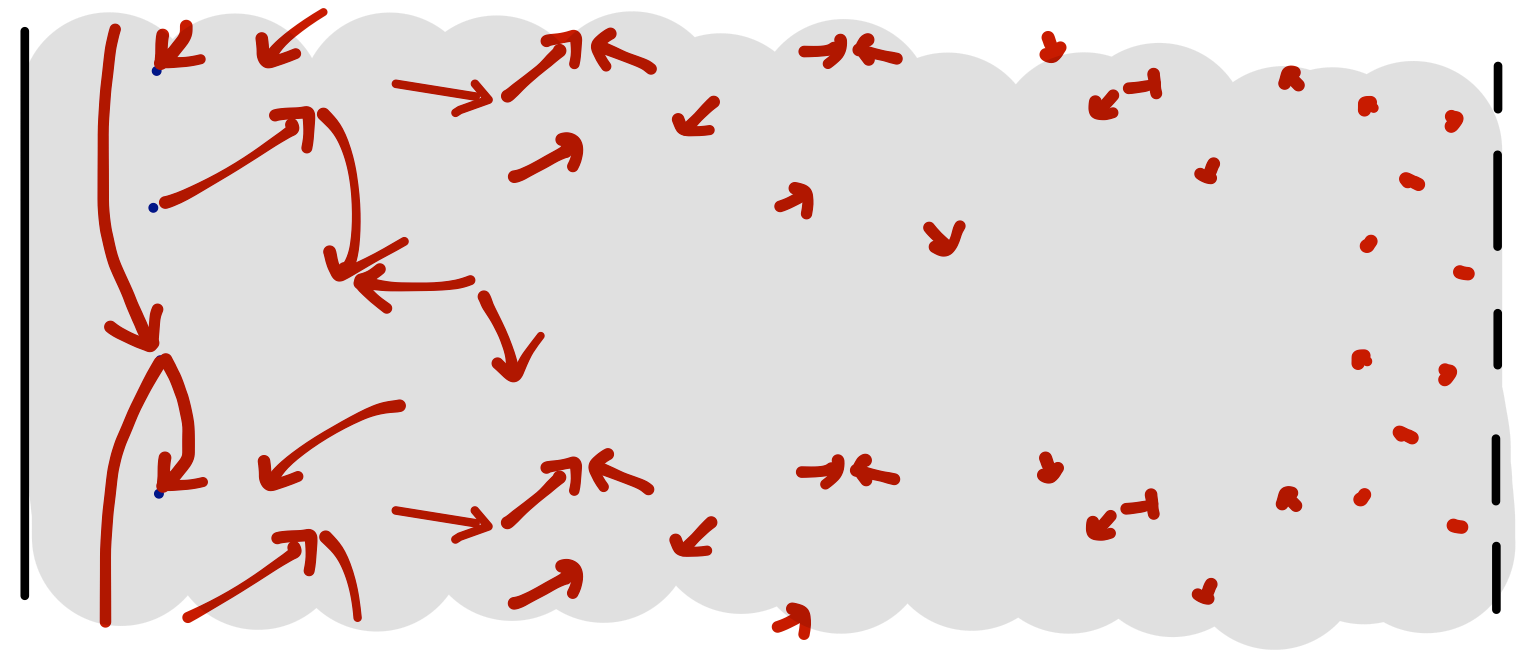}
\caption{A morphism in the obstruction category~$\CO^G( X)$.}
\end{figure}

The following result explains the choice of notation: the category $\CO^G( X)$ is the \emph{obstruction category}.
\begin{theorem} \label{obstruction-category}
The assembly map
\[
EG(\CF)_+\sma_{\Or G}\K(R[\oid{G}{-}]) \TO \K (  R[G] )
\]
is a $\pi_{\ast}$-isomorphism if and only if $K_{\ast} ( \CO^G ( EG( \CF ) ) ) = 0$.
\end{theorem}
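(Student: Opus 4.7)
The plan is to apply the homotopy fibration sequence~\eqref{eq:TOD} to both $X = EG(\CF)$ and $X = \pt$ and compare them via the natural map induced by the projection $EG(\CF) \TO \pt$. This produces a commutative diagram
\[
\begin{tikzcd}[column sep=small]
\K(\CT^G(EG(\CF))) \arrow[r] \arrow[d]
& \K(\CO^G(EG(\CF))) \arrow[r] \arrow[d]
& \K(\CD^G(EG(\CF))) \arrow[d, "\Sigma\asbl_\CF"]
\\
\K(\CT^G(\pt)) \arrow[r]
& \K(\CO^G(\pt)) \arrow[r]
& \K(\CD^G(\pt))
\end{tikzcd}
\]
of fibration sequences, in which the rightmost vertical map is identified with (the suspension of) the assembly map, as recalled immediately before the statement of the theorem.

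First I would verify two auxiliary facts. \textbf{(a)} The leftmost vertical map is a $\pi_*$-isomorphism: since $\CT^G(X)$ is a variant of $\CC^G(X)$ in which $X$ enters only through ``control data'' that does not affect the morphism sets, the forgetful argument behind~\eqref{forget-iso} shows that $\K(\CT^G(-))$ is essentially independent of the underlying $G$-CW~complex. \textbf{(b)} The spectrum $\K(\CO^G(\pt))$ is $\pi_*$-trivial: its objects map to $\pt \times [1,\infty) = [1,\infty)$, and a standard Eilenberg swindle by translation along the ray $[1,\infty)$ absorbs any object into a countable sum of shifted copies of itself, forcing the identity on $K$-theory to vanish.

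Given (a) and (b), taking vertical homotopy fibers term-wise in the diagram produces another fibration sequence of spectra in which the leftmost term is contractible and the middle term is equivalent to $\K(\CO^G(EG(\CF)))$ itself. This yields a $\pi_*$-equivalence
\[
\K(\CO^G(EG(\CF)))
\;\simeq\;
\hofib\Bigl(\K(\CD^G(EG(\CF))) \TO \K(\CD^G(\pt))\Bigr)
\,,
\]
whose right-hand side is the homotopy fiber of the suspended assembly map. Since a map of spectra is a $\pi_*$-isomorphism if and only if its homotopy fiber is $\pi_*$-trivial, both directions of the theorem follow simultaneously: the $K$-groups of $\CO^G(EG(\CF))$ vanish in every degree if and only if $\asbl_\CF$ is a $\pi_*$-isomorphism.

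The main obstacle will be verification of~(b): the Eilenberg swindle must respect the equivariant continuous control condition at infinity and the cofiniteness condition on preimages of compact subsets of~$[1,\infty)$, so some care is needed in choosing the translation and assembling countably many shifted copies into a genuine object of $\CO^G(\pt)$. Fortunately this type of swindle is standard in the controlled-algebra literature and is implicit in the frameworks developed in~\cite{BFJR} and~\cite{BLR-Inventiones}, so it can be invoked as a black box. A secondary technicality is to confirm that the identification of $\K(\CD^G(EG(\CF))) \TO \K(\CD^G(\pt))$ with $\Sigma\asbl_\CF$ is natural enough to fit into the comparison diagram above, but this is built into the Davis-L\"uck approach used to establish the identification in the first place.
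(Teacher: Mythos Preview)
Your proposal is correct and follows essentially the same approach as the paper: both set up the comparison ladder of the fibration sequence~\eqref{eq:TOD} along $EG(\CF)\TO\pt$, use the forgetful equivalence~\eqref{forget-iso} to identify the $\CT^G$-column, kill $\K(\CO^G(\pt))$ by the shift Eilenberg swindle on $[1,\infty)$, and then read off the conclusion from the identification of the $\CD^G$-column with $\Sigma\asbl_\CF$. The only cosmetic difference is that you phrase the final step as taking vertical homotopy fibers, whereas the paper does the equivalent chase on the long exact sequences in homotopy.
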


\begin{proof}
The map $EG( \CF ) \TO \pt$ and the homotopy fibration sequence~\eqref{eq:TOD} induce the following commutative diagram with exact rows.
\[
\begin{tikzcd}[column sep=small]
\dotsb
\arrow[r]
&
K_n( \CO^G( EG(\CF)) )
\arrow[r]
\arrow[d]
&
K_n( \CD^G(EG(\CF) ) )
\arrow[r, "\ts\four"]
\arrow[d,"\ts\two"']
&
K_{n-1}( \CT^G(EG(\CF) ) )
\arrow[r]
\arrow[d, "\ts\one", "\cong"']
&
\dotsb
\\
\dotsb
\arrow[r]
&
K_n( \CO^G( \pt ) ) = 0
\arrow[r]
&
K_n( \CD^G( \pt ) )
\arrow[r, "\ts\three"', "\cong"]
&
K_{n-1}( \CT^G( \pt ) )
\arrow[r]
&
\dotsb
\end{tikzcd}
\]
The map $\one$ is an isomorphism, because source and target are both isomorphic to $K_{n-1}(R[G])$ via the forgetful map~\eqref{forget-iso}.
Using the shift map $[1, \infty) \TO{} [1, \infty)$, $x \longmapsto x +1$, it is not difficult to prove that $\CO^G( \pt )$ admits an Eilenberg swindle, and so $K_*( \CO^G( \pt ) ) = 0$. Therefore also the map~$\three$  is an isomorphism. Since the map $\two$ is identified with the assembly map, the result follows.
\end{proof}

\begin{remark}[Proof of Theorem~\ref{thm:small-in-image}]
\label{rem:small-in-image}
Consider the ladder diagram in the previous proof, but replace $EG( \CF )$ with a simplicial complex~$Z$. The maps $\one$ and~$\three$ are still isomorphisms. The maps $\two$ and~$\four$ for $n=2$ are both models
for the assembly map~$\asbl_Z$ in Theorem~\ref{thm:small-in-image}. Exactness implies that $[\alpha] \in K_1( \CT^G ( Z ))$ is in the image of the assembly map if it maps to $0\in K_1( \CO^G( Z ) )$.
The statement of Theorem~\ref{thm:small-in-image} is now a special case of~\cite[Theorem~5.3(i)]{BL-Borel}.
\end{remark}

With some additional work, the program carried out above to decompose an arbitrary $K_1$-element into summands with sufficiently small representatives can be generalized  to show that the $K$-theory of the obstruction category in Theorem~\ref{obstruction-category} vanishes.
This leads to the following theorem, which is the main result of~\cite{BL-FH}.

\begin{theorem}[Farrell-Hsiang Criterion]
\label{FH-criterion}
\index{theorem!Farrell-Hsiang Criterion}
Let $\CF$ be a family of subgroups of~$G$.
Fix a word metric on~$G$.
Assume that there exists an~$N>0$ such that for any arbitrarily large~$D>0$ there exists a surjective homomorphism $\MOR{\pr_D}{G}{F}$ to a finite group~$F$ with the following property.
For each
\[
H\in\CH_{\pr_D}=\SET*{\pr_D^{-1}(E)}{\text{hyperelementary $E\le F$}}
\]
there exist:
\begin{enumerate}[label=(\roman*)]
\item
an $H$-simplicial complex~$Z_H$ of dimension at most~$N$ and whose isotropy groups are all contained in~$\CF$;
\item
a map
\(
\MOR{f_H}{\res_H G}{Z_H}
\)
that is $H$-equivariant and $D$-contracting, i.e.,
\(
d^1(f_H(g),f_H(g'))
\leq
\tfrac{1}{D}
\,
d^G(g,g')
\)
for all $g,g'\in G$.
\end{enumerate}
Then the assembly map
\[
EG(\CF)_+\sma_{\Or G}\K(R[\oid{G}{-}])
\TO
\K(R[G])
\]
is a $\pi_*$-isomorphism.
\end{theorem}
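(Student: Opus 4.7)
The plan is to upgrade the surjectivity argument used for Proposition~\ref{prop:ass-surjective} and Claim~\ref{Z2-is-FH} into a full vanishing statement about the obstruction category. By Theorem~\ref{obstruction-category}, it suffices to prove that $K_*(\CO^G(EG(\CF)))=0$, and the whole proof will revolve around showing that every class in this $K$-group admits a representative whose geometric support has been made simultaneously small and concentrated over isotropy from $\CF$. First I would set up controlled versions of restriction, induction and the tensor product~$l_M$ acting on the obstruction category $\CO^G(-)$ exactly as they were set up for $\CT^G(-) = \CC(-)$ in the discussion of diagram~\eqref{huge-diagram}; these lift the Swan-ring action on $K_*(R[G])$ to a Swan-ring action on $K_*(\CO^G(X))$, and they satisfy the same key property that $\size \ind_H^G\alpha=\size \res_H^G\alpha=\size l_M\alpha=\size \alpha$.

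Given a class $x\in K_n(\CO^G(EG(\CF)))$, I would next apply Swan induction (Theorem~\ref{thm:swan}) through the surjection $\pr_D\colon G\to F$: the lifted module structure gives a decomposition
\[
x=\sum_{H\in\CH_{\pr_D}} \ind_H^G\bigl([M_H^+]\cdot\res_H^G x\bigr)-\ind_H^G\bigl([M_H^-]\cdot\res_H^G x\bigr),
\]
so it is enough to show that each summand vanishes. For fixed $H\in\CH_{\pr_D}$, the $H$-equivariant $D$-contracting map $f_H\colon \res_H G\to Z_H$ induces, after induction to~$G$, a functor landing in $\CO^G(\ind_H^G Z_H)$ which rescales sizes by a factor of $1/D$; since $\ind_H^G Z_H$ has all isotropy groups in~$\CF$ and dimension at most~$N$, its universal property supplies a $G$-map $\ind_H^G Z_H\to EG(\CF)$ compatible with the maps to a point, so each summand of~$x$ is represented in $\CO^G(\ind_H^G Z_H)$ by a class of arbitrarily small size once $D$ is taken large enough.

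The hard step is then the higher-dimensional generalisation of Theorem~\ref{thm:small-in-image}: one needs a uniform $\varepsilon=\varepsilon(N)$ such that, for any $G$-simplicial complex~$Y$ of dimension $\le N$ with isotropy in~$\CF$, any class in $K_n(\CT^G(Y))$ representable by an automorphism of size $\le \varepsilon$ goes to zero in $K_n(\CO^G(Y))$. Granted such a statement, choosing $D$ large enough makes each summand of~$x$ die in $K_n(\CO^G(\ind_H^G Z_H))$ and hence, by functoriality of $\CO^G$ under $G$-maps, also in $K_n(\CO^G(EG(\CF)))$; summing over $H$ gives $x=0$, and Theorem~\ref{obstruction-category} then yields the desired $\pi_*$-isomorphism. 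This vanishing for small morphisms is the essential technical result; it is the higher-dimensional analogue of the assertion extracted from~\cite{BL-Borel} in Remark~\ref{rem:small-in-image}, and I would prove it by the usual Mayer-Vietoris / Eilenberg-swindle bookkeeping on the simplices of~$Y$: an automorphism of size much smaller than the mesh of the simplicial structure can be cut up and reassembled simplex-wise, and the resulting decomposition is an additivity relation that expresses its class in $K_n(\CO^G(Y))$ as a sum of classes each supported over a single orbit $G\cdot\sigma$ with isotropy in $\CF$, on which the obstruction category admits an Eilenberg swindle analogous to the one used to show $K_*(\CO^G(\pt))=0$.

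The main obstacle is precisely this controlled-simplicial decomposition in arbitrary $K$-theoretic degree, because unlike in the $K_1$ argument, where one could explicitly manipulate matrices and use the lemma about barycenters, higher $K$-groups require a genuinely functorial cut-and-paste argument and a careful comparison of the sizes of the pieces with the simplicial scale of $Z_H$. Everything else — the Swan induction reduction, the behaviour of sizes under induction/restriction/tensor, and the obstruction-category reformulation of the assembly map — is an essentially formal consequence of what has already been set up in this section.
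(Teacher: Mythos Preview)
The paper does not actually prove this theorem; it only states it as the main result of~\cite{BL-FH}, with the remark that ``with some additional work, the program carried out above \ldots\ can be generalized to show that the $K$-theory of the obstruction category in Theorem~\ref{obstruction-category} vanishes.'' Your reduction to $K_*(\CO^G(EG(\CF)))=0$ via Theorem~\ref{obstruction-category}, together with the idea of lifting Swan induction, restriction, induction, and~$l_M$ to the controlled setting, is exactly the right framework and matches what \cite{BL-FH} does.

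The gap in your sketch is the passage from $K_1$ to arbitrary~$K_n$. You propose to fix a class $x\in K_n(\CO^G(EG(\CF)))$, choose $D$ large depending on~$x$, and then argue that the Swan summands vanish because they admit ``small representatives.'' But for $n\neq1$ a $K$-theory class is not represented by a single automorphism with a well-defined size, so your formulation of the hard step (``any class in $K_n(\CT^G(Y))$ representable by an automorphism of size $\leq\varepsilon$'') has no direct meaning, and your proposed Mayer--Vietoris bookkeeping on simplices does not supply the missing notion. The actual argument in~\cite{BL-FH} is \emph{functorial} rather than elementwise: one uses a whole \emph{sequence} of finite quotients $\pr_{D_n}$ with $D_n\to\infty$ and assembles the associated contracting maps into a single transfer functor on the obstruction category, exploiting the $[1,\infty)$-direction built into~$\CO^G(X)$ by applying more and more contracting maps as the coordinate in $[1,\infty)$ increases. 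The resulting functor induces the identity on $K_*(\CO^G(EG(\CF)))$ but factors through a category admitting an Eilenberg swindle. In other words, the dependence of $D$ on a chosen class~$x$ is replaced by letting $D$ vary along the $[1,\infty)$-axis, and it is this packaging --- not a higher-$K$-group analogue of ``small automorphisms lie in the image'' --- that makes the argument go through in all degrees.
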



\section{Trace methods}
\label{sec:trace}

Trace maps are maps from algebraic $K$-theory to other theories like Hochschild homology, topological Hochschild homology, and their variants, which are usually easier to compute than $K$-theory.
These trace maps have been used successfully to prove injectivity results about assembly maps in algebraic $K$-theory.
In fact, the most sophisticated trace invariant, topological cyclic homology, was invented by B\"okstedt, Hsiang, and Madsen specifically to attack the rational injectivity of the classical assembly map for~$\K(\IZ[G])$, as explained in Subsection~\ref{subsec:BHM} below.
In joint work with L\"uck and Rognes, we applied similar techniques to the Farrell-Jones assembly map, and in particular we obtained the following partial verification of Conjecture~\ref{conj:Wh-inj}; see~\cite[Theorem~1.1]{kc}. 

\begin{theorem}
\label{thm:Wh}
Assume that, for every finite cyclic subgroup~$C$ of a group~$G$, the first and second integral group homology $H_1(BZ_GC;\IZ)$ and $H_2(BZ_GC;\IZ)$ of the centralizer~$Z_G C$ of~$C$ in~$G$ are finitely generated abelian groups.
Then $G$ satisfies Conjecture~\ref{conj:Wh-inj}, i.e., the map
\begin{equation}
\colim_{H\in\obj\Sub G(\Fin)}
\wh(H)\tensor_\IZ\IQ
\TO
\wh(G)\tensor_\IZ\IQ
\end{equation}
is injective.
\end{theorem}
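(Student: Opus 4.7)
The plan is to deduce Theorem~\ref{thm:Wh} from a rational injectivity statement for the Farrell-Jones assembly map with integer coefficients on its finite part, obtained via trace methods in the spirit of~\cite{BHM} and~\cite{kc}. Using the identifications recorded just after Conjecture~\ref{conj:Wh-iso}, the source of the map in Theorem~\ref{thm:Wh} is canonically a subsum of the rational source of $\asbl_\Fin$ coming from the $(s,t)=(0,1)$ terms in the equivariant Chern character~\eqref{eq:Lueck-Grunewald}, with $\wh(C)\tensor_\IZ\IQ$ replacing $K_1(\IZ[C])\tensor_\IZ\IQ$. Hence I would reduce the theorem to showing that the composition of this Chern-character inclusion with the rationalized assembly map
\[
\pi_1\Bigl(EG(\Fin)_+\sma_{\Or G}\K(\IZ[\oid{G}{-}])\Bigr)\tensor_\IZ\IQ \TO K_1(\IZ[G])\tensor_\IZ\IQ
\]
remains injective after passing to the Whitehead quotients on both sides.

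To produce this injectivity, I would post-compose with a cyclotomic trace into a topological cyclic homology-type theory. The efficient target is the BHM functor~$\BC$ highlighted in Subsection~\ref{subsec:BHM} below, whose assembly-type map is analyzable summand by summand over the conjugacy classes of finite cyclic subgroups $C\leq G$. For each such~$C$, the trace detects~$\Theta_C(\wh(C)\tensor_\IZ\IQ)$ rationally; this is the content of the classical B\"okstedt-Hsiang-Madsen theorem applied to~$C$ itself, together with the computations of $\TC(\IZ[C];p)$ by Hesselholt-Madsen. The passage from $C$ to the whole group~$G$ is handled by the equivariant refinement carried out in~\cite{kc}: the trace produces a commutative diagram comparing the Chern character decomposition of the source of assembly with an analogous decomposition of~$\BC(\IZ[G])$ over~$(\FCyc)$, indexed summand-wise by~$H_*(BZ_GC;\IQ)\tensor\Theta_C(\pi_*\BC(\IZ[C])\tensor\IQ)$.

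The main obstacle will be bridging the gap between the rational and the $p$-adic information on the source of assembly. Trace methods naturally produce injectivity statements after profinite completion of homotopy groups, while Theorem~\ref{thm:Wh} is a purely rational statement; the translation requires control over the low-degree homotopy groups of the source spectra $BZ_GC_+\sma\BC(\IZ[C])$. This is precisely where the finite generation hypotheses on $H_1(BZ_GC;\IZ)$ and $H_2(BZ_GC;\IZ)$ enter: together with the computation of $\pi_*\BC(\IZ[C])$ in low degrees, they guarantee that the $E^2$-terms of the Atiyah-Hirzebruch spectral sequence that contribute to the Whitehead part in total degree~$n=1$ are of finite type, so that the natural comparison map from the rationalized profinite completion of~$\pi_1$ to~$\pi_1\tensor_\IZ\IQ$ is injective on the summands of interest. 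Combining this finite-generation input with the summand-wise trace detection of the previous paragraph yields the desired rational injectivity, and hence Theorem~\ref{thm:Wh}.
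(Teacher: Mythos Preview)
Your overall strategy is correct and is essentially the one the paper follows, deferring to~\cite{kc}: identify the source via the equivariant Chern character~\eqref{eq:Lueck-Grunewald} with the $(s,t)=(0,1)$ summands, then use the cyclotomic trace into~$\THH\times\C$ as in diagram~\eqref{eq:BHM}, generalized from~$\asbl_1$ to~$\asbl_\FCyc$. The conclusion follows from Theorem~\ref{thm:lrrv-main} (or rather its more refined version in~\cite{kc}), once one knows that condition~\ref{our-B} holds unconditionally for~$t=1$.

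Two points in your sketch need correction. First, the detection step is not ``\BHM{} applied to~$C$'' together with computations of~$\TC(\IZ[C];p)$. One passes through sphere coefficients~$\IS[G]$ using Waldhausen's linearization result, and the relevant Hesselholt--Madsen input is Theorem~D of~\cite{HM} for~$\IZ_p$, not for~$\IZ[C]$; the detection of the~$\Theta_C$-summand is then the verification of condition~\ref{our-B} for~$t=1$, which is a direct computation with~$K_1(\IZ[\zeta_c])$~\cite[Proposition~2.4]{kc}. Second, the finite generation hypotheses on~$H_1$ and~$H_2$ do not enter as a ``rationalized profinite completion versus rational'' comparison. They are what makes Theorem~\ref{thm:C-inj} work in degree~$1$: the functor~$\C$ is a homotopy limit, and one needs finiteness of the relevant Atiyah--Hirzebruch~$E^2$-terms to interchange this~$\holim$ with rationalization.
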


In this section we want to explain the ideas and the structure of the proofs of \BHM’s Theorem~\ref{thm:BHM} and its generalization, suppressing some of the technical details.
We first consider a $K_0$-analog of Theorem~\ref{thm:Wh} and explain in full detail its proof, which is an illuminating example of the trace methods.


\subsection{A warm-up example}
\label{subsec:warm-up}

\begin{proposition}
\label{prop:K_0-k-injective}
Let $\Bbbk$ be any field of characteristic zero.
Then for any group~$G$ the map
\[
\colim_{H\in\obj\Sub G(\Fin)}
K_0(\Bbbk[H])\tensor_\IZ \IQ
\TO
K_0(\Bbbk[G])\tensor_\IZ \IQ
\]
is injective.
\end{proposition}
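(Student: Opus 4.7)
The plan is to exploit the Hattori-Stallings trace $\MOR{\tr_G}{K_0(\Bbbk[G])}{HH_0(\Bbbk[G])}$ from $K_0$ to degree-zero Hochschild homology; recall that $HH_0(\Bbbk[G])=\Bbbk[G]/[\Bbbk[G],\Bbbk[G]]$ has as $\Bbbk$-basis the $G$-conjugacy classes of elements of~$G$. Naturality of $\tr$ with respect to ring maps, together with the fact that inner automorphisms act trivially on $HH_0$, produces a commutative square
\[
\begin{tikzcd}[column sep=small]
\ds\colim_{H\in\obj\Sub G(\Fin)}K_0(\Bbbk[H])\tensor_\IZ\IQ
\arrow[r]
\arrow[d, "\colim\tr_H"']
&
K_0(\Bbbk[G])\tensor_\IZ\IQ
\arrow[d, "\tr_G"]
\\
\ds\colim_{H\in\obj\Sub G(\Fin)}HH_0(\Bbbk[H])
\arrow[r]
&
HH_0(\Bbbk[G])\mathrlap{\,,}
\end{tikzcd}
\]
and it suffices to show that the clockwise composite is injective.

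First I would analyze the bottom row. Since $HH_0(\Bbbk[H])=\Bbbk[H/\conj]$ and a morphism in $\Sub G(\Fin)$ represented by conjugation by $g\in G$ sends $[h]_H\longmapsto[ghg^{-1}]_K$, the assignment $[g]_G\longmapsto [g]_{\langle g\rangle}$ defines a two-sided inverse to the natural map $\colim_H HH_0(\Bbbk[H])\TO\Bbbk[T/G]$, where $T/G$ denotes the set of $G$-conjugacy classes of torsion elements of~$G$. Via this identification, the bottom horizontal map is the obvious inclusion $\Bbbk[T/G]\hookrightarrow HH_0(\Bbbk[G])=\Bbbk[G/\conj]$, which is manifestly injective.

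Second, for each finite $H$ the map $\tr_H\tensor_\IZ\IQ$ is injective. Indeed, in characteristic zero Maschke's theorem gives $\Bbbk[H]\cong\prod_iM_{n_i}(D_i)$ for finite-dimensional division algebras $D_i$ over~$\Bbbk$, so $K_0(\Bbbk[H])\tensor_\IZ\IQ\cong\IQ^r$ has as basis the simple modules~$S_i$, and $\tr_H(S_i)=n_i\ne0$ in the $i$-th summand of $HH_0(\Bbbk[H])=\prod_iD_i/[D_i,D_i]$.

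The crux is to upgrade this pointwise injectivity to injectivity of $\colim\tr_H$. Because $\Sub G(\Fin)$ is not filtered in general, this does not follow formally, and my plan is to invoke the rational Artin induction theorem. Artin's theorem supplies, for each finite~$H$, an explicit $\IQ$-linear expression for every $[V]\in K_0(\Bbbk[H])\tensor_\IZ\IQ$ as a combination of inductions from finite cyclic subgroups of~$H$; this forces the inclusion $\Sub G(\FCyc)\hookrightarrow\Sub G(\Fin)$ to induce an isomorphism on $\colim K_0(\Bbbk[-])\tensor_\IZ\IQ$. On the smaller subcategory $\Sub G(\FCyc)$ all groups are abelian, $HH_0(\Bbbk[C])=\Bbbk[C]$, and the trace identifies $K_0(\Bbbk[C])\tensor_\IZ\IQ$ with the $\IQ$-span of the idempotents corresponding to the simple summands of $\Bbbk[C]$; the first-step description of the colimit applies verbatim to $\FCyc$, and injectivity of $\colim\tr$ reduces to a direct verification using linear independence of these idempotents across the conjugation morphisms of $\Sub G(\FCyc)$. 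The main obstacle throughout is precisely this non-filtered colimit issue, for which Artin induction is the essential tool.
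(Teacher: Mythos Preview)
Your overall strategy---use the Hattori-Stallings trace and analyze the resulting square---is the same as the paper's, and your identification of the bottom map with an injection $\Bbbk[T/G]\hookrightarrow\Bbbk[G/\conj]$ is correct. You also correctly pinpoint the real difficulty: $\Sub G(\Fin)$ is not filtered, so termwise injectivity of $\tr_H$ does not automatically yield injectivity of $\colim\tr_H$.

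However, your proposed cure for this---Artin induction to pass to $\Sub G(\FCyc)$, followed by a ``direct verification''---does not actually close the gap. The category $\Sub G(\FCyc)$ is still not filtered, and for cyclic $C$ the map $\tr_C\colon K_0(\Bbbk[C])\tensor_\IZ\IQ\to\Bbbk[C]$ is still only injective, not surjective (unless $\Bbbk$ happens to contain enough roots of unity). So you face exactly the same obstruction after the reduction, and the promised ``linear independence of idempotents across conjugation morphisms'' is left unexplained; it is not clear how to make this rigorous without further input.

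The paper's fix is much simpler and avoids Artin induction entirely: replace $-\tensor_\IZ\IQ$ by $-\tensor_\IZ\Bbbk$ (harmless since $\Bbbk$ is flat over~$\IQ$). For finite~$H$ the $\Bbbk$-linear trace $K_0(\Bbbk[H])\tensor_\IZ\Bbbk\to\Bbbk[\conj_\Bbbk H]$ is then an \emph{isomorphism}, by the orthogonality of characters (over~$\IC$ this is the classical statement; over general~$\Bbbk$ one uses $\Bbbk$-conjugacy classes). Isomorphisms, unlike mere injections, do pass to arbitrary colimits, so $\colim\tr_H$ is an isomorphism and you are done. The key step you are missing is precisely this upgrade from injectivity to bijectivity by enlarging the coefficient ring.
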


This is closely related to Conjecture~\ref{conj:FJ-K_0-regular} for $R = \Bbbk$, but observe that, even though $K_0(\Bbbk[H])$ is a finitely generated free abelian group for each finite group $H$, the colimit in the source of the map in Conjecture~\ref{conj:FJ-K_0-regular} may contain torsion~\cite{Kropholler-Moselle}.
Therefore Proposition~\ref{prop:K_0-k-injective} does not imply the injectivity of the map in Conjecture~\ref{conj:FJ-K_0-regular}.

The key ingredient in the proof of Proposition~\ref{prop:K_0-k-injective} is the trace map
\[
\MOR{\tr}{K_0(R)}{R/[R,R]}
\,,
\]
where $[R,R]$ denotes the subgroup of the additive group of~$R$ generated by commutators. The trace map is defined as follows.
The projection $R\TO R/[R,R]$ extends to a map
\[
\MOR{\tr}{M_n(R)}{R/[R,R]}
\,,\qquad
a=(a_{ji})\longmapsto\tr(A)=\sum_{i=1}^n[a_{ii}]
\,,
\]
which is easily seen to be the universal additive map out of $M_n(R)$ with the trace property: $\tr(ab) = \tr(ba)$.
If $p$ is an idempotent matrix in $M_n(R)$, then $\tr(p)$ only depends on the isomorphism class of the projective $R$-module $R^n p$.
Since the trace sends the block sum of matrices to the sum of the traces, it induces a group homomorphism
\begin{equation}
\label{eq:tr0}
\MOR{\tr}{K_0(R)}{R/[R,R]}
\,,\qquad
[(p_{ji})]\longmapsto\sum_i[p_{ii}]
\,.
\end{equation}

Now consider the case of group algebras.
We denote by~$\conj G$ the set of conjugacy classes of elements of~$G$.
The map \(R[G]\TO R[\conj G]\) induced by the projection sends $\bigl[ R[G] , R[G] \bigr]$ to zero, and it induces an isomorphism
\[
R[G] / \bigl[ R[G] , R[G] \bigr] \cong R[\conj G]
\,.
\]
The composition of the trace map~$\tr$ from~\eqref{eq:tr0} with this isomorphism gives a map
\[
\MOR{\tr}{K_0(R[G])}{R[\conj G]}
\,,
\]
which is known as the Hattori-Stallings rank\index{Hattori-Stallings rank}\index{trace map!Hattori-Stallings rank}.
In the special case of group algebras of finite groups with coefficients in fields of characteristic zero we have the following result.

\pagebreak

\begin{lemma}
\label{lem:characters}
Suppose that the group~$G$ is finite and that $R=\Bbbk$ is a field of characteristic zero.
Let $R_\Bbbk(G)$ be the representation ring of~$G$ over~$\Bbbk$, and consider the map
\[
\MOR{\chi}{R_\Bbbk(G)}{\Bbbk[\conj G]}
\,,\qquad
\rho\longmapsto\bigl(\chi_\rho\colon g\mapsto\tr_\Bbbk(\rho(g))\bigr)
\]
that sends each representation to its character.
Then there is a commutative diagram
\[
\begin{tikzcd}
K_0 ( \Bbbk[G] )
\arrow[r, "\tr"]
\arrow[d, "\cong"']
&
\Bbbk[\conj G]
\arrow[d, "\cong"]
&
{[g]}
\arrow[mapsto, d]
\\
R_\Bbbk(G)
\arrow[r, "\chi"']
&
\Bbbk[\conj G]
&
\#\bigl(Z_G\langle g \rangle\bigr)[g^{-1}]
\end{tikzcd}
\]
whose vertical maps are isomorphisms.
\end{lemma}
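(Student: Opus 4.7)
The plan is to establish the two vertical isomorphisms and then verify commutativity by computing the Hattori-Stallings rank of a representation explicitly in terms of its character.

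The left vertical isomorphism is immediate from Maschke's theorem: since $\operatorname{char}\Bbbk = 0$ and $G$ is finite, $\Bbbk[G]$ is semisimple, so every finitely generated $\Bbbk[G]$-module is projective and $K_0(\Bbbk[G])$ coincides with the Grothendieck group $R_\Bbbk(G)$ of the category of finite-dimensional $\Bbbk[G]$-modules. The right vertical map $[g]\mapsto \#(Z_G\langle g\rangle)[g^{-1}]$ is the composition of the involution $[g]\mapsto [g^{-1}]$ of $\conj G$ (a bijection, since $\#Z_G(g)=\#Z_G(g^{-1})$ depends only on $[g]$) with the diagonal rescaling by the positive integers $\#Z_G(g)$; both factors are isomorphisms because $\operatorname{char}\Bbbk=0$.

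The heart of the argument is the calculation of the Hattori-Stallings rank. Since $\tr$ and $\chi$ are both additive on direct sums, it suffices to treat a simple $\Bbbk[G]$-module $V$. By semisimplicity $V$ is a direct summand of the regular representation, so I can write $V\cong \Bbbk[G]e$ for a scalar idempotent $e=\sum_g e_g g\in\Bbbk[G]$; under the identification $\Bbbk[G]/[\Bbbk[G],\Bbbk[G]]\cong \Bbbk[\conj G]$ the rank is then $\tr[V]=\sum_g e_g[g]$. To extract $e_g$ from $\chi_V$, I will compute the $\Bbbk$-linear trace of left multiplication $L_{eg^{-1}}$ on $\Bbbk[G]$ in two ways. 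Using the basis $G$, each $L_h$ ($h\in G$) acts as a permutation matrix and contributes to the trace only when $h=1$, so $\tr_\Bbbk L_{eg^{-1}}=\#G\cdot (eg^{-1})_1=\#G\cdot e_g$. On the other hand $L_e$ is the $\Bbbk[G]$-linear projection of $\Bbbk[G]$ onto the submodule $V$ and $L_{g^{-1}}$ preserves $V$, so $\tr_\Bbbk L_{eg^{-1}}=\tr_\Bbbk(L_{g^{-1}}|_V)=\chi_V(g^{-1})$. Equating the two expressions yields $e_g=\chi_V(g^{-1})/\#G$.

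Collecting terms by conjugacy class and using $\#[g]=\#G/\#Z_G(g)$, this gives
\[
\tr[V] \;=\; \sum_{[g]\in\conj G}\frac{\chi_V(g^{-1})}{\#Z_G(g)}\,[g] \,,
\]
and then applying the right vertical map $[g]\mapsto \#Z_G(g)[g^{-1}]$ and reindexing by $h=g^{-1}$ produces $\sum_{[h]\in\conj G}\chi_V(h)[h]=\chi_V$, proving commutativity. I expect the only real obstacle to be the careful bookkeeping of the involution $g\leftrightarrow g^{-1}$ and of the centralizer factors; the structural content is just semisimplicity of $\Bbbk[G]$ together with the two-way trace computation above, which is classical.
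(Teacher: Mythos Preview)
Your overall strategy is exactly the paper's, and your final formula $\tr[V]=\sum_{[g]}\frac{\chi_V(g^{-1})}{\#Z_G(g)}\,[g]$ is correct, but the derivation contains a left/right mix-up that makes the intermediate step false. You claim that $L_e$ (left multiplication by~$e$) is the $\Bbbk[G]$-linear projection of~$\Bbbk[G]$ onto $V=\Bbbk[G]e$; in fact $L_e$ is a projection onto the \emph{right} ideal $e\Bbbk[G]$, and left multiplication is never left-$\Bbbk[G]$-linear. Moreover $L_{g^{-1}}$ does not preserve $e\Bbbk[G]$, so you cannot conclude $\tr_\Bbbk L_{eg^{-1}}=\tr_\Bbbk(L_{g^{-1}}|_V)$. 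Concretely, your formula $e_g=\chi_V(g^{-1})/\#G$ would force $e_g$ to be constant on each conjugacy class, which fails for any primitive idempotent that is not central (e.g., for the $2$-dimensional irreducible representation of~$S_3$).

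The fix, which is precisely what the paper does, is to use \emph{right} multiplication $R_e\colon x\mapsto xe$. This is the left-$\Bbbk[G]$-linear projection onto $V=\Bbbk[G]e$, it commutes with every~$L_g$, and $L_g$ does preserve~$V$; hence $\tr_\Bbbk(L_g R_e)=\tr_\Bbbk(L_g|_V)=\chi_V(g)$. Computing the same trace in the basis~$G$ now gives
\[
\tr_\Bbbk(L_g R_e)=\sum_{h\in G} e_{h^{-1}g^{-1}h}=\#Z_G\langle g\rangle\sum_{x\in[g^{-1}]}e_x,
\]
which relates $\chi_V$ to the conjugacy-class sums $\sum_{x\in[g]}e_x$ (not to the individual~$e_g$). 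From here your concluding paragraph goes through unchanged.
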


In other words, the Hattori-Stallings rank can be identified up to isomorphism with the character map~$\chi$.
Notice, though, that unlike~$\chi$ the Hattori-Stallings rank is natural in~$G$.

\begin{proof}[Proof of Lemma~\ref{lem:characters}]
Since $G$ is finite and $\Bbbk$ has characteristic zero, a finitely generated projective $\Bbbk[G]$-module $V$ is the same as a finite-dimensional $\Bbbk$-vector space $V$ equipped with a linear $G$-action
$\MOR{\rho}{G}{GL(V)}$.
This explains the left vertical isomorphism in the diagram above.
It is well known that every irreducible representation is contained as a direct summand in the regular representation $\Bbbk [G]$. Therefore we can assume that the idempotent $p=p^2 = \sum_{k \in G} p_k k$ lies in $\Bbbk [G]$. Let $\langle - , - \rangle$ be the $\Bbbk$-bilinear form on $\Bbbk [ G] $ that is determined on group elements by $\langle g , h \rangle = \delta_{gh}$. Then
\begin{multline*}
\chi_{\rho} ( g )
=  \mathit{tr}_{\Bbbk} (\Bbbk[G]p \to \Bbbk[G]p, \  x \mapsto gx )
=  \mathit{tr}_{\Bbbk} ( \Bbbk[G] \to \Bbbk[G], \  x \mapsto gxp)
=
\\
=  \sum_{h \in G} \langle h , ghp \rangle
=  \sum_{h \in G}  \sum_{k \in G}  p_k \langle h , ghk \rangle
=  \sum_{h \in G}  p_{h^{-1} g^{-1} h}
=  \sum_{x \in [g^{-1}]}  \#\bigl( Z_G \langle g^{-1} \rangle\bigr) p_x
\,.
\end{multline*}
For the last equality observe that the stabilizer of $g \in G$ under the action of $G$ on itself via conjugation is the centralizer $Z_G \langle g \rangle$. For the Hattori-Stallings rank we have
$\tr (p) ( [g] ) = \sum_{x \in [g]} p_x$.
\end{proof}

We are now ready to prove Proposition~\ref{prop:K_0-k-injective}.

\begin{proof}[Proof of Proposition~\ref{prop:K_0-k-injective}]
It suffices to prove the injectivity of the map in Proposition~\ref{prop:K_0-k-injective} with $ - \tensor_\IZ \IQ$ replaced by $ - \tensor_{\IZ} \Bbbk$.
We explain the proof in the case $\Bbbk=\IC$.
Consider the following commutative diagram.
\[
\begin{tikzcd}[column sep=tiny]
\ds\colim_{H\in\obj\Sub G(\Fin)}
K_0(\IC [H])\tensor_\IZ\IC
\arrow[rr]
\arrow[d, "\cong" description, "\ts\one\ "', pos=.36, shift left=2.6em, shorten <=-1ex]
&
&
\ds
K_0(\IC [G])\tensor_\IZ\IC
\arrow[d]
\\
\ds\colim_{H\in\obj\Sub G(\Fin)}
\mathrlap{\IC [\conj H]}
\hphantom{K_0(\IC [H])\tensor_\IZ\IC }
\arrow[rr, shorten <=-1.8em]
\arrow[dr, "\cong" description, "\ts\two\:"', pos=.28, shorten <=-1.8em]
&
&
\IC [\conj G]
\\
&
\ds\IC \Bigl[\,
\colim_{H\in\obj\Sub G(\Fin)}
\conj H
\Bigr]
\arrow[ur, hook, "\ts\three"']
\end{tikzcd}
\]
The vertical maps are induced by the $\IC$-linear extension of the Hattori-Stallings rank.
For each finite group~$H$ this extension is an isomorphism by Lemma~\ref{lem:characters} and~\cite[Corollary 1 in \S 12.4]{Serre}, and so the map~$\one$ is an isomorphism.
The map~$\two$ is an isomorphism because the functor~$\IC [-]$ is left adjoint and hence preserves colimits.
Since conjugation with elements in~$G$ represents morphisms in $\Sub G ( \Fin )$, the map~$\three$ is easily seen to be injective already before applying $\IC[-]$.

The proof for an arbitrary field $\Bbbk$ of characteristic zero is completely analogous, but the set $\conj G$ needs to be replaced by the set $\conj_{\Bbbk} G$ of $\Bbbk$-conjugacy classes, a certain quotient of $\conj G$.
\end{proof}

Notice that for each finite group~$H$ the Hattori-Stallings rank itself (before $\Bbbk$-linear extension) is always injective.
But we cannot leverage this fact to prove integral injectivity results  because colimits need not preserve injectivity.


\subsection{\BHM's Theorem}
\label{subsec:BHM}

The map~$\tr$ in~\eqref{eq:tr0} is just the first (or rather the zeroth) and the easiest {trace invariant} of the algebraic $K$-theory of~$R$.
We now briefly overview how it can be generalized, starting with the Dennis trace with values in Hochschild homology.

Consider the simplicial abelian group
\begin{equation}
\label{eq:HH}
\begin{tikzcd}
\arrow[r, phantom, "\dotsb"]
&
R\tensor R\tensor R
\arrow[r, shift left=2]
\arrow[r, leftarrow, shift left=1]
\arrow[r]
\arrow[r, leftarrow, shift right=1]
\arrow[r, shift right=2]
&
R\tensor R
\arrow[r, shift left]
\arrow[r, leftarrow]
\arrow[r, shift right]
&
R
\end{tikzcd}
\end{equation}
whose face maps are
\[
d_i(r_0\tensor\dotsb\tensor r_n)=\begin{cases}
r_0\tensor\dotsb\tensor r_ir_{i+1}\tensor\dotsb\tensor r_n
&\text{if $i<n$;}
\\
r_nr_0\tensor r_1\tensor\dotsb\tensor r_{n-1}
&\text{if $i=n$.}
\end{cases}
\]
The geometric realization of the simplicial abelian group~\eqref{eq:HH} is the zeroth space of an $\Omega$-spectrum denoted~$\HH(R)=\HH(R\,|\,\IZ)$, whose homotopy groups
\[
\hh_*(R)=\pi_*\HH(R)
\quad
\text{are the \emph{Hochschild homology}\index{Hochschild homology} groups of~$R$.}
\]
In particular, we see that $\hh_0(R)$ is the cokernel of the map $r\otimes s\longmapsto rs-sr$, and hence
\[
\hh_0(R)\cong R/[R,R]
\,.
\]
The trace map~$\MOR{\tr}{K_0(R)}{\hh_0(R)}$ in~\eqref{eq:tr0} lifts to a map of spectra
\[
\MOR{\trd}{\K^{\ge0}(R)}{\HH(R)}
\]
called the \emph{Dennis trace}, such that $\pi_0\trd=\tr$.
We use $\K^{\ge0}$ to denote connective algebraic $K$-theory, the $(-1)$-connected cover of the functor $\K$ we used throughout.

Following ideas of Goodwillie and Waldhausen, Bökstedt~\cite{B-THH} introduced a far-reaching generalization of~$\HH(R)$, called \emph{topological Hochschild homology} and denoted~$\THH(R)$.
We omit the technical details of the definitions, and we rather explain the underlying ideas and structures.

The key idea in the definition of topological Hochschild homology\index{topological Hochschild homology} is to pass from the ring~$R$ to its Eilenberg-Mac Lane ring spectrum~$\IH R$, and to replace the tensor products (over the initial ring~$\IZ$) with smash products (over the initial ring spectrum~$\IS$).
In order to make this precise, one needs to work within a symmetric monoidal model category of spectra (e.g., symmetric spectra), or with ad hoc point-set level constructions (as B\"okstedt did, long before symmetric spectra and the like were discovered).
Once these technical difficulties are overcome, one obtains a simplicial spectrum
\pagebreak
\begin{equation}
\label{eq:THH}
\begin{tikzcd}
\arrow[r, phantom, "\dotsb"]
&
\IH R\sma\IH R\sma\IH R
\arrow[r, shift left=2]
\arrow[r, leftarrow, shift left=1]
\arrow[r]
\arrow[r, leftarrow, shift right=1]
\arrow[r, shift right=2]
&
\IH R\sma\IH R
\arrow[r, shift left]
\arrow[r, leftarrow]
\arrow[r, shift right]
&
\IH R
\,,
\end{tikzcd}
\end{equation}
whose geometric realization is~$\THH(R)=\HH(\IH R\,|\,\IS)$.
Notice that of course this definition applies not only to Eilenberg-Mac Lane ring spectra~$\IH R$ but to arbitrary ring spectra~$\IA$.

Bökstedt also lifted the Dennis trace to topological Hochschild homology for any connective ring spectrum~$\IA$:
\[
\begin{tikzcd}[column sep=large]
&
\THH(\IA)
\arrow[d]
\\
\K^{\ge0}(\IA)
\arrow[r,                 "\trd"']
\arrow[ru,  bend left=20, "\trb" pos=.66]
&
\HH(\pi_0\IA)
\mathrlap{\,.}
\end{tikzcd}
\]

Cyclic permutation of the tensor factors in~\eqref{eq:HH} or smash factors in~\eqref{eq:THH} makes those simplicial objects into \emph{cyclic} objects, thus inducing a natural $S^1$-action on their geometric realizations; see for example \cite[Section~3]{Jones} 
and \cite{Drinfeld}.
B\"okstedt, Hsiang, and Madsen~\cite{BHM} discovered that topological Hochschild homology has even more structure, which Hochschild homology lacks.
Fix a prime~$p$.
As $n$ varies, the fixed points of the induced $\Cp{n}$-actions are related by maps
\begin{equation}
\label{eq:RF}
\begin{tikzcd}
\ds\THH(\IA)^{\Cp{n}}
\arrow[r, shift left,  "R"]
\arrow[r, shift right, "F"']
&
\ds\THH(\IA)^{\Cp{n-1}}
\,,
\end{tikzcd}
\end{equation}
called Restriction and Frobenius.
The map~$F$ is simply the inclusion of fixed points, whereas the definition of the map~$R$ is much more delicate and specific to the construction of~$\THH$.
The homotopy equalizer of~\eqref{eq:RF} is denoted~$\TC^{n+1}(\IA;p)$.
One important property of the maps $R$ and~$F$ is that they commute, and therefore they induce a map
\(
\TC^{n+1}(\IA;p)\TO\TC^n(\IA;p)
\,.
\)
The \emph{topological cyclic homology}\index{topological cyclic homology} of~$\IA$ at the prime~$p$ is then defined as the homotopy limit
\[
\TC(\IA;p)=\holim_n\TC^n(\IA;p)
\,.
\]

B\"okstedt, Hsiang, and Madsen lifted the B\"okstedt trace to topological cyclic homology, thus obtaining the following commutative diagram for any connective ring spectrum~$\IA$:
\begin{equation}
\label{eq:traces}
\begin{tikzcd}[column sep=large]
&
\TC(\IA;p)
\arrow[d]
\\
&
\THH(\IA)
\arrow[d]
\\
\K^{\ge0}(\IA)
\arrow[r,                 "\trd"']
\arrow[ru,  bend left=20, "\trb" pos=.66]
\arrow[ruu, bend left=36, "\trc"]
&
\HH(\pi_0\IA)
\mathrlap{\,.}
\end{tikzcd}
\end{equation}
The map~$\trc$ is called the \emph{cyclotomic trace map}\index{cyclotomic trace map}\index{trace map!cyclotomic}.

They then used this technology to prove the following striking theorem, which is often referred to as the algebraic $K$-theory Novikov Conjecture; see~\cite[Theorem~9.13]{BHM} 
and~\cite[Theorem~4.5.4]{Madsen}. 

\pagebreak

\begin{theorem}[\BHM]
\label{thm:BHM}
\index{theorem!Bökstedt-Hsiang-Madsen}
Let $G$ be a group.
Assume that the following condition holds.
\begin{enumerate}[label=\assum{\Alph*}{1}, leftmargin=*]
\item
\label{BHM-A}
For every $s\geq1$, the integral group homology $H_s(BG;\IZ)$ is a finitely generated abelian group.
\end{enumerate}
Then the classical assembly map
\[
\MOR{\asbl_1}{BG_+\sma\K(\IZ)}{\K(\IZ[G])}
\]
is $\pi_*^\IQ$-injective, i.e., $\pi_n(\asbl_1)\tensor_\IZ\IQ$ is injective for all~$n\in\IZ$.
\end{theorem}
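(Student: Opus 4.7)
The plan, following Bökstedt-Hsiang-Madsen, is to use the cyclotomic trace of diagram~\eqref{eq:traces} applied to $\IH\IZ[G]$ to compare the algebraic $K$-theory assembly map with an analogous assembly map in topological cyclic homology, which is more computable. Fixing a prime~$p$, naturality of $\trc$ in the ring yields a commutative diagram
\[
\begin{tikzcd}[column sep=large]
BG_+\sma\K(\IZ) \arrow[r, "\asbl_1"] \arrow[d, "\id\sma\trc"'] & \K(\IZ[G]) \arrow[d, "\trc"] \\
BG_+\sma\TC(\IZ;p) \arrow[r, "\asbl_1^{\TC}"'] & \TC(\IZ[G];p) \mathrlap{\,,}
\end{tikzcd}
\]
and it suffices to show that some path around this square is rationally injective on~$\pi_*$.

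My first step would be a reduction from rational injectivity to $p$-adic rational injectivity of the source. Hypothesis~\ref{BHM-A}, via the Atiyah-Hirzebruch spectral sequence for $BG_+\sma\K(\IZ)$ and the finite generation of $K_*(\IZ)$, implies that $\pi_n(BG_+\sma\K(\IZ))$ is finitely generated in each degree. Hence $\pi_n(-)\tensor\IQ\to\pi_n\bigl((-)_p^\compl\bigr)\tensor\IQ$ is injective on the source, and it is enough to establish injectivity on $\pi_*\tensor\IQ$ after $p$-completing the targets.

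My second step would be to verify that the left vertical map becomes rationally injective after $p$-completion. This combines Borel's computation of $K_*(\IZ)\tensor\IQ$ with B\"okstedt's, and subsequently Hesselholt-Madsen's, calculation of $\TC_*(\IZ;p)$: the trace $\trc\colon\K(\IZ)\to\TC(\IZ;p)$ is known to be rationally injective in nonnegative degrees after $p$-adic completion. Smashing with $BG_+$ propagates this injectivity to each $\pi_n$ via the Atiyah-Hirzebruch spectral sequence, once again using \ref{BHM-A} to ensure that the spectral sequences collapse rationally with no room for rational cancellation.

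The main obstacle is the third step: showing that the bottom assembly map $\asbl_1^{\TC}$ is split injective (or at least rationally injective) after $p$-completion. The crucial input is the cyclic bar construction description of $\THH(\IH\IZ[G])$, which produces a natural decomposition indexed by the conjugacy classes of elements of~$G$, the trivial conjugacy class contributing precisely $BG_+\sma\THH(\IZ)$. The heart of \BHM's argument is that this decomposition lifts compatibly through the fixed-point spectra $\THH(\IH\IZ[G])^{\Cp{n}}$ and through the Restriction and Frobenius maps of~\eqref{eq:RF}, so that the trivial-conjugacy-class summand descends to a retract $BG_+\sma\TC(\IZ;p)\to\TC(\IZ[G];p)$ of the bottom assembly map. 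This is carried out in~\cite{BHM} via the intermediate functor~$C$ highlighted in the introduction, which isolates the trivial-class contribution before the full $\TC$ is formed; the identification of this summand with $BG_+\sma\TC(\IZ;p)$ requires \ref{BHM-A} once more to interchange homotopy limits with smashing with $BG_+$ in degrees of interest. Composing the three injectivity statements yields the theorem.
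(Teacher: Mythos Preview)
Your overall strategy is right, but two substantive deviations from the paper's proof create genuine gaps.

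\textbf{Sphere versus integers.}  The paper does \emph{not} work with $\TC(\IZ;p)$ and $\TC(\IZ[G];p)$.  Instead it passes to the sphere spectrum~$\IS$ via the linearization map $\IS\to\IZ$, using Waldhausen's result that $\K^{\ge0}(\IS)\to\K^{\ge0}(\IZ)$ is a $\pi_*^\IQ$-isomorphism (and similarly for~$\IS[G]\to\IZ[G]$).  The reason is your third step: the conjugacy-class decomposition of $\THH$ only interacts well with the $R$-maps for \emph{spherical} group rings.  For $\IS[G]$ the Restriction maps split, producing the map $\TC(\IS[G];p)\to\C(\IS[G];p)$ of~\eqref{eq:TC->C}; no such splitting is available for~$\IZ[G]$.

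\textbf{$\TC$ versus $\C$.}  More seriously, your claim that $\asbl_1^{\TC}$ is split injective is not known in the generality you need.  The paper's Remark~\ref{rem:C-vs-TC} makes this explicit: rational injectivity of the $\TC$-assembly map for the trivial family requires, in addition to~\ref{BHM-A}, the condition~\assum{A''}{1} that $\langle g\rangle=1\iff\langle g^p\rangle=1$, which \emph{fails} whenever $G$ has $p$-torsion.  The fix is precisely the functor~$\C$ you mention in passing: one proves instead that the assembly map for $\THH\times\C$ (map~$\five$ in diagram~\eqref{eq:BHM}) is $\pi_*^\IQ$-injective under~\ref{BHM-A} alone (Theorems~\ref{thm:thh-iso} and~\ref{thm:C-inj}).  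So $\C$ is not merely an intermediate bookkeeping device; it is essential to avoid the $p$-torsion obstruction.

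\textbf{The detection step.}  Your second step is also too quick.  The paper does not directly compute $\TC_*(\IZ;p)$; it instead chases diagram~\eqref{eq:detection}, using the Hesselholt--Madsen theorem that $\trc^\compl_p\colon\K^{\ge0}(\IZ_p)^\compl_p\to\TC(\IZ_p;p)^\compl_p$ is a $\pi_n$-isomorphism for $n\ge0$, together with Schneider's result that $\K^{\ge0}(\IZ)^\compl_p\to\K^{\ge0}(\IZ_p)^\compl_p$ is $\pi_*^\IQ$-injective when $p$ is a \emph{regular} prime.  The existence of regular primes is what makes the argument go through.
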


We now explain the structure of the proof of Theorem~\ref{thm:BHM}, following the approach of~\cite{kc}.
As mentioned above, the idea is to use the cyclotomic trace map.
However, it is not enough to work with topological cyclic homology, and one needs a variant of it that we proceed to explain.
Instead of taking the homotopy equalizer of~$R$ and~$F$ in~\eqref{eq:RF}, we may consider just the homotopy fiber of~$R$ and define
\[
\C^{n+1}(\IA;p)=\hofib\Bigl(
\THH(\IA)^{\Cp{n}}
\TO[R]
\THH(\IA)^{\Cp{n-1}}
\Bigr)
\,.
\]
The map~$F$ induces a map
\(
\C^{n+1}(\IA;p)\TO\C^n(\IA;p)
\,,
\)
and we define
\[
\C(\IA;p)=\holim_n\C^n(\IA;p)
\,.
\]
A fundamental property, also established in~\cite{BHM}, is that $\C^{n+1}(\IA;p)$ can be identified with $\THH(A)_{h\Cp{n}}$, up to a zigzag of $\pi_*$-isomorphisms.
In~\cite[Section~8]{kc} we provided a \emph{natural} zigzag of $\pi_*$-isomorphisms between $\THH(A)_{h\Cp{n}}$ and~$\C^{n+1}(\IA;p)$, natural even before passing to the stable homotopy category of spectra.
The key tool here is the natural Adams isomorphism for equivariant orthogonal spectra developed in~\cite{RV}.

In the special case when $\IA=\IS[G]$ is a spherical group ring, then the maps~$R$ split, and these splittings can be used to construct a map
\begin{equation}
\label{eq:TC->C}
\TC(\IS[G];p)\TO\C(\IS[G];p)
\,.
\end{equation}
The crucial advantage of using~$\C$ instead of~$\TC$ is that more general rational injectivity statements can be proved for the assembly maps for~$\C$; compare Remark~\ref{rem:C-vs-TC} below.

In order to prove Theorem~\ref{thm:BHM} one studies the following commutative diagram.
\begin{equation}
\label{eq:BHM}
\begin{tikzcd}
\ds BG_+\sma\K(\IZ)
\arrow[rr, "\asbl_1"]
&
&
\K(\IZ[G])
\\
\ds BG_+\sma\K^{\ge0}(\IZ)
\arrow[rr]
\arrow[u, "\ts\one"]
&
&
\K^{\ge0}(\IZ[G])
\arrow[u, "\ts\bone"']
\\
\ds BG_+\sma\K^{\ge0}(\IS)
\arrow[rr]
\arrow[u, "\ts\two"]
\arrow[d, "\ts\three"']
&
&
\K^{\ge0}(\IS[G])
\arrow[u, "\ts\btwo"']
\arrow[d, "\ts\bthree"]
\\
\ds BG_+\sma\TC(\IS;p)
\arrow[rr]
\arrow[d, "\ts\four"']
&
&
\TC(\IS[G];p)
\arrow[d, "\ts\bfour"]
\\
\ds BG_+\sma\bigl(\THH(\IS)\times\C(\IS;p)\bigr)
\arrow[rr, "\ts\five"']
&
&
\THH(\IS[G])\times\C(\IS[G];p)
\end{tikzcd}
\end{equation}
The horizontal maps are all classical assembly maps, and we want to prove that the one at the top of the diagram is $\pi_*^\IQ$-injective.
The maps $\one$ and~$\bone$ are induced by the natural maps from connective to non-connective algebraic $K$-theory.
Since $\IZ$ is regular, $\one$ is a $\pi_*$-isomorphism.
The maps $\two$ and~$\btwo$ come from the linearization (or Hurewicz) map~$\IS\TO\IZ$, and they are both $\pi_*^\IQ$-isomorphisms by a result of Waldhausen~\cite[Proposition~2.2]{Waldhausen-A1}. 
The maps $\three$ and~$\bthree$ are given by the cyclotomic trace map, and $\four$ and~$\bfour$ by the natural maps in \eqref{eq:traces} and~\eqref{eq:TC->C}.

So, in order to prove that the top horizontal map in diagram~\eqref{eq:BHM} is $\pi_*^\IQ$-injective, it is enough to show that:
\begin{enumerate}[label=(\alph*)]
\item
\label{i:splitting}
The assembly map~$\five$ is $\pi_*^\IQ$-injective.
\item
\label{i:detection}
The composition $\four\circ\three$ is $\pi_*^\IQ$-injective.
\end{enumerate}
The assumption~\ref{BHM-A} is then shown to imply~\ref{i:splitting}, and in fact not just for~$\IS$ but for arbitrary connective ring spectra~$\IA$.
This is the special case~$\CF=1$ of Theorems~\ref{thm:thh-iso} and~\ref{thm:C-inj} below.
The difficult part in proving~\ref{i:detection} is the analysis of the map~$\three$.
The Atiyah-Hirzebruch spectral sequences collapse rationally, and therefore it is enough to study the rational injectivity of $\MOR{\trc}{\K^{\ge0}(\IS)}{\TC(\IS;p)}$.
To this end, consider the following commutative diagram.
\begin{equation}
\label{eq:detection}
\begin{tikzcd}[row sep=tiny]
&
&
&
\ds\K^{\ge0}(\IZ_p)
\arrow[rd, "\ts\seven"]
\\
\ds\K^{\ge0}(\IS)
\arrow[rr, "\ts\two"]
\arrow[ddd, "\trc\,"']
&
&
\ds\K^{\ge0}(\IZ)
\arrow[ru, "\ts\six"]
\arrow[rd, "\ts\eight"']
&
&
\ds\K^{\ge0}(\IZ_p)^\compl_p
\arrow[ddd, "\,\trc^\compl_p"]
\\
&
&
&
\ds\K^{\ge0}(\IZ)^\compl_p
\arrow[ru, "\ts\nine"']
\\
\\
\ds\TC(\IS;p)
\arrow[rrrr]
&
&
&
&
\ds\TC(\IZ_p;p)^\compl_p
\end{tikzcd}
\end{equation}
Here $( - )^\compl_p$ denotes the $p$-completion of spectra and $\IZ_p$ are the $p$-adic numbers.
The map $\trc^\compl_p$ is a $\pi_n$-isomorphism for each~$n\ge0$ by a result of Hesselholt and Madsen~\cite[Theorem~D]{HM}. 
We already  mentioned above that $\two$ is a $\pi_*^{\IQ}$-isomorphism.

It remains to discuss the diamond. Since the groups $K_n( \IZ)$ are known to be finitely generated, $\eight$ is $\pi_*^{\IQ}$-injective.
The question whether  $\nine$ is $\pi_*^{\IQ}$-injective is open in general. It can be reformulated in terms of similar maps in {\'e}tale $K$-theory, {\'e}tale cohomology, or Galois cohomology, as surveyed in~\cite[Section~18]{kc}. Luckily the equivalent conjecture in Galois cohomology is known to be true if $p$ is a regular prime by results in \cite{Schneider}; see~\cite[Proposition~2.9]{kc}.
Recall that a  prime~$p$ is regular if it does not divide the order of the ideal class group of~$\IQ( \zeta_p)$. Since regular primes exist we obtain the following statement and we are done.
\begin{enumerate}[label=\assum{\Alph*}{1}, leftmargin=*, start=2]
\item
There exists a prime $p$ such that $\nine \circ \eight$ is $\pi_*^{\IQ}$-injective.
\end{enumerate}
We remark that little is known about the rationalized homotopy groups of~$\K^{\ge0}( \IZ_p )$ without $p$-completion; compare \cite[Warning 60]{Weibel05}. 

This concludes our explanation of the proof of Theorem~\ref{thm:BHM}.


\subsection{Generalizations}

The following result generalizes Theorem~\ref{thm:BHM} from the classical to the Farrell-Jones assembly map, and is a special case of \cite[Main Technical Theorem~1.16]{kc}. 

\pagebreak

\begin{theorem}
\label{thm:lrrv-main}
\index{theorem!Bökstedt-Hsiang-Madsen!generalization}
Let $G$ be a group and let~$\CF\subseteq\FCyc$ be a family of finite cyclic subgroups of~$G$.
Assume that the following two conditions hold.
\begin{enumerate}[label=\assum{\Alph*}{\CF}, leftmargin=*]
\item
\label{our-A}
For every $C\in\CF$ and every~$s\geq1$, the integral group homology $H_s(BZ_GC;\IZ)$ of the centralizer of~$C$ in~$G$ is a finitely generated abelian group.
\item
\label{our-B}
For every $C\in\CF$ and every~$t\geq0$, the natural homomorphism
\[
K_t(\IZ[\zeta_c])\tensor_\IZ\IQ
\TO
\;\smallprod_{\mathclap{p\textup{ prime}}}\;
K_t\Bigl(\IZ_p\tensor_\IZ\IZ[\zeta_c];\IZ_p\Bigr)\tensor_\IZ\IQ
\]
is injective, where $c$ is the order of~$C$, $\zeta_c$ is any primitive $c$-th root of unity, and $K_t(R;\IZ_p)=\pi_t(\K(R)^\compl_p)$.
\end{enumerate}
Then the assembly map
\[
\MOR{\asbl_\CF}{EG(\CF)_+\sma_{\Or G}\K^{\ge0}(\IZ[\oid{G}{-}])}{\K^{\ge0}(\IZ[G])}
\]
is $\pi_*^\IQ$-injective.
\end{theorem}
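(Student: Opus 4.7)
The plan is to follow the blueprint for Theorem~\ref{thm:BHM} established around diagram~\eqref{eq:BHM}, systematically replacing each classical assembly map $BG_+\sma\T$ by its Davis-Lück $\CF$-counterpart $EG(\CF)_+\sma_{\Or G}\T(\oid{G}{-})$. Applying this to the four functors $\K^{\ge0}(\IZ[-])$, $\K^{\ge0}(\IS[-])$, $\TC(\IS[-];p)$, and $\THH(\IS[-])\times\C(\IS[-];p)$ produces a commutative diagram of $\asbl_\CF$'s whose vertical maps are induced by those of~\eqref{eq:BHM}. The goal is $\pi_*^\IQ$-injectivity of the top horizontal arrow, namely $\asbl_\CF$ for $\K^{\ge0}(\IZ[-])$, and as in the classical case this reduces to two statements: (a)~the $\asbl_\CF$ for $\THH(\IS[-])\times\C(\IS[-];p)$ is $\pi_*^\IQ$-injective; (b)~the composite $\K^{\ge0}(\IZ[C])\TO\THH(\IS[C])\times\C(\IS[C];p)$ of cyclotomic trace and projection is $\pi_*^\IQ$-injective for each $C\in\CF$. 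The rails comparing sphere and integer versions are $\pi_*^\IQ$-isomorphisms orbit by orbit (Waldhausen's linearization) and hence on assembly via Remark~\ref{rem:source-assembly}\ref{rem-i:AHSS}; connective and non-connective $K$-theory agree on the targets because $\IZ[C]$ is regular for $C$ finite.

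\textbf{Part~(a) from condition~\ref{our-A}.} The identification $\C^{n+1}(\IS[C];p)\simeq\THH(\IS[C])_{h\Cp{n}}$ must be made natural in $C\in\Or G$ via the equivariant Adams isomorphism of~\cite{RV} and~\cite[Section~8]{kc}, so that the source of the $\THH(\IS[-])\times\C(\IS[-];p)$ assembly can be analyzed through the Bredon-type spectral sequence of Remark~\ref{rem:source-assembly}\ref{rem-i:AHSS}. Rationally this spectral sequence collapses, giving an expression in terms of $H_s(BZ_GC;\IQ)$ for $C\in\CF$; assumption~\ref{our-A} provides the finite generation needed to produce a $\pi_*^\IQ$-splitting of this assembly map by a standard Künneth-type argument.

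\textbf{Part~(b) from condition~\ref{our-B}.} Orbit by orbit, I would adapt the diamond~\eqref{eq:detection} with $\IZ[C]$ substituted for~$\IZ$. Rationally $\IZ[C]$ decomposes along cyclotomic factors $\IZ[\zeta_d]$ for $d\mid c$, and condition~\ref{our-B} supplies injectivity of $K_t(\IZ[\zeta_c])\tensor_\IZ\IQ\TO\prod_p K_t(\IZ_p\tensor_\IZ\IZ[\zeta_c];\IZ_p)\tensor_\IZ\IQ$, playing exactly the role of the regular-prime input in the classical diamond. The Hesselholt-Madsen theorem~\cite{HM} then identifies $\trc^\compl_p$ for $\IZ_p\tensor_\IZ\IZ[C]$ as a $\pi_{\ge0}$-isomorphism, completing the diamond.

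\textbf{Main obstacle.} The hard part will be the point-set-level coherence required to make all these constructions naturally functorial in $C\in\Or G$ at the level of spectra, not merely in the stable homotopy category. The Adams-type identification $\C^{n+1}\simeq\THH_{h\Cp{n}}$, the splitting $\TC\TO\C$ of~\eqref{eq:TC->C}, and the diamond comparison with $p$-completions must all lift to natural transformations of $\Or G$-indexed functors to spectra before Davis-Lück assembly can be applied uniformly. This is where most of the technical work of~\cite{kc} is spent, and it is built on the equivariant Adams isomorphism of~\cite{RV}; once this coherence is in hand, the spectral sequence argument of Part~(a) and the orbitwise diamond reduction of Part~(b) follow by comparatively standard methods.
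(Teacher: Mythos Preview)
Your proposal is correct and mirrors the paper's own outline: replace diagram~\eqref{eq:BHM} by its $\asbl_\CF$ analog, reduce to (a) $\pi_*^\IQ$-injectivity of the $\THH\times\C$ assembly (handled by Theorems~\ref{thm:thh-iso} and~\ref{thm:C-inj} under~\ref{our-A}) and (b) orbitwise detection via the diamond~\eqref{eq:detection} with $\IZ[C]$ replacing~$\IZ$ (where~\ref{our-B} supplies the analog of the regular-prime input), with the natural Adams isomorphism of~\cite{RV} providing the required $\Or G$-functoriality. The paper itself gives no further detail here and refers to~\cite{kc}; your identification of the point-set coherence as the main technical burden is exactly right.
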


\begin{remark}
\label{rem:lrrv-main}
Several comments are in order.
\begin{enumerate}[label=(\roman*)]
\item
When $\CF=1$ is the trivial family, Theorems~\ref{thm:BHM} and~\ref{thm:lrrv-main} coincide.
This is because assumption~\assum{A}{1} of Theorem~\ref{thm:lrrv-main} is literally the same as assumption~\ref{BHM-A} of Theorem~\ref{thm:BHM}, and assumption~\assum{B}{1} follows at once from the corresponding true statement explained at the end of the previous subsection.

\item
When $\CF=\FCyc$, then the rationalized assembly map for \emph{connective} algebraic $K$-theory studied in Theorem~\ref{thm:lrrv-main} can be rewritten as in Conjecture~\ref{conj:FJ-rationalized}, because the isomorphisms \eqref{eq:rel-asbl-rationally} and~\eqref{eq:Lueck-Grunewald} hold for both connective and non-connective algebraic $K$-theory.
The only difference is that the summands indexed by~$t=-1$ in the source of the map in Conjecture~\ref{conj:FJ-rationalized} are now missing.
Notice that the negative $K$-groups $K_t(\IZ[C])$ are known to vanish for any~$t<-1$ if $C$ is finite or even virtually cyclic~\cite{FJ-vcyc}.

\item
\label{i:on-A}
As noted above, assumption \ref{our-A} implies and is the obvious generalization of assumption~\ref{BHM-A}.
For any $\CF\subseteq\FCyc$, assumption~\ref{our-A} is satisfied if there is a universal space~$EG(\Fin)$ of finite type, i.e., whose skeleta are all cocompact.
Hyperbolic groups, finite-dimensional CAT(0)-groups, cocompact lattices in virtually connected Lie groups, arithmetic groups in semisimple connected linear $\IQ$-algebraic groups, mapping class groups, and outer automorphism groups of free groups are all examples of groups that even have a finite-dimensional and cocompact $EG(\Fin)$.
Among these groups, outer automorphism groups of free groups do not appear in Theorem~\ref{thm:state}, and for them Theorem~\ref{thm:lrrv-main} gives the first result about the Farrell-Jones Conjecture.
An interesting example of a group that satisfies \assum{A}{\FCyc} without having an~$EG(\Fin)$ of finite type is given by Thompson’s group~$T$ of orientation preserving, piecewise linear, dyadic homeomorphisms of the circle; see~\cite{GV}.

\item
\label{i:our-B}
Conjecturally assumption~\ref{our-B} of Theorem~\ref{thm:lrrv-main} is always satisfied; in fact, it is implied by a weak version of the Leopoldt-Schneider Conjecture for cyclotomic fields, as explained carefully in \cite[Sections~2 and~18]{kc}.
When $t=0$ or $t=1$, i.e., for $K_0$ and $K_1$, the map in~\ref{our-B} is injective for arbitrary $c$ by direct computation; compare \cite[Proposition~2.4]{kc}.
For any fixed~$c$ it is known that injectivity may fail for at most finitely many values of~$t$. 
These two facts allow to deduce Theorems~\ref{thm:Wh} and~\ref{thm:inj-results}\ref{i:eventual-inj} from Theorem~\ref{thm:lrrv-main}, or rather from its more general version in~\cite[Main Technical Theorem~1.16]{kc}, as explained in loc.\ cit., Section~17 and page~1015.
Notice that, on the other hand, Theorem~\ref{thm:BHM} cannot be used to deduce information about the Whitehead group~$\wh(G)$, which is the cokernel of the map induced on~$\pi_1$ by the classical assembly map~$\asbl_1$.
\end{enumerate}
\end{remark}

The proof of Theorem~\ref{thm:lrrv-main} follows the same strategy as the proof of Theorem~\ref{thm:BHM} outlined above.
We consider the analog of diagram~\eqref{eq:BHM} for the generalized assembly map~$\asbl_\CF$; compare~\cite[``main diagram''~(3.1)]{kc}. 
The key results about assembly maps are summarized in the following two theorems~\cite[Theorem~1.19, parts~(i) and~(ii)]{kc}.
We point out that all the following results hold for arbitrary connective ring spectra~$\IA$.

\begin{theorem}
\label{thm:thh-iso}
\index{topological Hochschild homology}
For any group~$G$ and for any family~$\CF$ of subgroups of~$G$, the assembly map
\[
\MOR{\asbl_\CF}{EG(\CF)_+\sma_{\Or G}\THH(\IA[\oid{G}{-}])}{\THH(\IA[G])}
\]
induces split monomorphisms on~$\pi_*$, and it is a $\pi_*$-isomorphism if and only if~$\CF$ contains all cyclic subgroups of~$G$, i.e., $\CF\supseteq\Cyc$.
\end{theorem}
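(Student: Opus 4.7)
The plan is to exploit the well-known decomposition of topological Hochschild homology of a groupoid algebra by conjugacy classes of loops. For any small groupoid~$\Gamma$ and any connective ring spectrum~$\IA$ there is a natural $\pi_*$-isomorphism
\[
\THH(\IA[\Gamma]) \simeq \THH(\IA)\sma|N^{\mathrm{cyc}}\Gamma|_+,
\]
where $N^{\mathrm{cyc}}\Gamma$ is the cyclic nerve of~$\Gamma$; this follows from the cyclic-bar description of~$\THH$ and the splitting $\IA[\Gamma]\simeq\IA\sma\Gamma_+$. Applied to the translation groupoid~$\oid{G}{X}$ of a $G$-space~$X$, the cyclic nerve splits naturally into $G$-conjugacy-class components as
\[
|N^{\mathrm{cyc}}\oid{G}{X}| \simeq \smallcoprod_{[g]\in\conj G} EZ_G\langle g\rangle\timesd_{Z_G\langle g\rangle}X^{\langle g\rangle}.
\]
Since both $\THH(\IA)\sma-$ and the coend $EG(\CF)_+\sma_{\Or G}-$ commute with coproducts and preserve $\pi_*$-equivalences, the problem reduces to analyzing the map of pointed spaces
\[
EG(\CF)_+\sma_{\Or G}|N^{\mathrm{cyc}}\oid{G}{-}|_+ \TO |N^{\mathrm{cyc}}G|_+
\]
induced by $EG(\CF)\TO\pt$ and showing that this map is, up to $\pi_*$-equivalence, the inclusion of the sub-wedge indexed by those $[g]$ with $\langle g\rangle\in\CF$.

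Next I would handle one conjugacy class~$[g]$ at a time. The functor $G/H\mapsto(G/H)^{\langle g\rangle}$ on~$\Or G$, regarded as valued in $Z_G\langle g\rangle$-sets, is represented by $G/\langle g\rangle$, so the co-Yoneda lemma yields a natural $Z_G\langle g\rangle$-equivariant identification
\[
EG(\CF)_+\sma_{\Or G}(G/-)^{\langle g\rangle}_+ \cong EG(\CF)^{\langle g\rangle}_+.
\]
Pulling the fixed factor $EZ_G\langle g\rangle$ out of the coend and combining with the previous display identifies the $[g]$-summand of the source with
\[
EZ_G\langle g\rangle_+\sma_{Z_G\langle g\rangle}EG(\CF)^{\langle g\rangle}_+,
\]
while the corresponding summand of the target is $BZ_G\langle g\rangle_+\cong EZ_G\langle g\rangle_+\sma_{Z_G\langle g\rangle}S^0$, and the map between them is induced by the projection $EG(\CF)^{\langle g\rangle}\TO\pt$. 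When $\langle g\rangle\in\CF$, the fixed-point space $EG(\CF)^{\langle g\rangle}$ is contractible and the summand map is a weak equivalence; when $\langle g\rangle\notin\CF$ it is empty, so the source summand is the trivial pointed space.

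Putting the pieces together, $\asbl_\CF$ is $\pi_*$-equivalent to the inclusion of the sub-wedge indexed by the $[g]\in\conj G$ with $\langle g\rangle\in\CF$, with an obvious retraction given by collapsing the remaining summands. This yields the split monomorphism on~$\pi_*$, and $\asbl_\CF$ is a $\pi_*$-isomorphism precisely when every conjugacy class contributes, i.e.\ when $\langle g\rangle\in\CF$ for every~$g\in G$, equivalently $\Cyc\subseteq\CF$. The main obstacle I anticipate is technical rather than conceptual: one has to upgrade the $G$-conjugacy-class decomposition of $|N^{\mathrm{cyc}}\oid{G}{X}|$ to a natural equivalence of $\Or G$-diagrams, and then verify that the resulting wedge splitting genuinely survives both the coend with $EG(\CF)_+$ and the subsequent smash with $\THH(\IA)$, producing an honest splitting of spectra rather than merely a decomposition on homotopy groups.
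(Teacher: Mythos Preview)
The paper does not actually prove this theorem; it is stated as a summary result and the proof is delegated to~\cite[Theorem~1.19, parts~(i) and~(ii)]{kc}. Your approach via the cyclic-nerve decomposition $\THH(\IA[\Gamma])\simeq\THH(\IA)\sma|N^{\mathrm{cyc}}\Gamma|_+$ and the conjugacy-class splitting of~$|N^{\mathrm{cyc}}\oid{G}{X}|$ is precisely the standard one used in that reference, so your outline is on the right track and matches what the cited proof does.

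One small correction worth flagging: the functor $G/H\mapsto(G/H)^{\langle g\rangle}$ on~$\Or G$ is corepresented by~$G/\langle g\rangle$, but the automorphism group of~$G/\langle g\rangle$ in~$\Or G$ is the Weyl group $N_G\langle g\rangle/\langle g\rangle$, not~$Z_G\langle g\rangle$ itself. In the actual argument one has to be a bit careful to match this with the $Z_G\langle g\rangle$-Borel construction appearing in the cyclic-nerve decomposition; this is handled in~\cite{kc} by working with the centralizer and the free loop space description, and the discrepancy disappears because $\langle g\rangle$ acts trivially on~$X^{\langle g\rangle}$. Your closing caveat about needing the decomposition to be a natural equivalence of $\Or G$-diagrams is exactly the point where the cited reference does the work.
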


\begin{theorem}
\label{thm:C-inj}
Let $G$ be a group and let~$\CF\subseteq\FCyc$ be a family of finite cyclic subgroups of~$G$.
Assume that the following condition holds.
\begin{enumerate}[label=\assum{\Alph*}{\CF}, leftmargin=*]
\item
For every $C\in\CF$ and every~$s\geq1$, the integral group homology $H_s(BZ_GC;\IZ)$ of the centralizer of~$C$ in~$G$ is a finitely generated abelian group.
\end{enumerate}
Then the assembly map
\[
\MOR{\asbl_\CF}{EG(\CF)_+\sma_{\Or G}\C(\IA[\oid{G}{-};p])}{\C(\IA[G];p)}
\]
is $\pi_*^\IQ$-injective.
\end{theorem}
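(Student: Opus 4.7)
The strategy is to reduce the claim for~$\C$ to a statement about $\Cp{n}$-homotopy orbits of~$\THH$, exploit Theorem~\ref{thm:thh-iso} at each finite level, and then propagate the rational injectivity through the homotopy limit using hypothesis~\assum{A}{\CF}.

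First, I would invoke the natural zigzag of $\pi_*$-isomorphisms
$$\C^{n+1}(\IA[\oid{G}{-}];p) \simeq \THH(\IA[\oid{G}{-}])_{h\Cp{n}}$$
of \cite[Section~8]{kc}, which is built from the equivariant Adams isomorphism of~\cite{RV} and is natural in the groupoid variable. This yields a ladder of assembly maps compatible with the Frobenius-induced structure maps as~$n$ varies, and it reduces the claim to showing that
$$\holim_n \Bigl( \asbl_\CF^{(n)} \colon EG(\CF)_+\sma_{\Or G}\THH(\IA[\oid{G}{-}])_{h\Cp{n}}\TO\THH(\IA[G])_{h\Cp{n}}\Bigr)$$
is $\pi_*^\IQ$-injective.

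Second, I would upgrade Theorem~\ref{thm:thh-iso} to an equivariant statement, obtaining at each level~$n$ a $\pi_*$-split injection $\asbl_\CF^{(n)}$. Both source and target of the $\THH$-assembly map are built from Bökstedt's cyclic bar construction, so the assembly map is a map of genuine $S^1$-spectra. Moreover, since~$\CF\subseteq\FCyc$ and the $\THH$-assembly for the family~$\Cyc$ is an equivalence by Theorem~\ref{thm:thh-iso}, the splitting witnessing~$\asbl_\CF$ as a summand of~$\asbl_\Cyc$ can be chosen to be $S^1$-equivariant because it arises from the decomposition of $\THH$ of action groupoids indexed by conjugacy classes of elements of~$G$. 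Applying~$(-)_{h\Cp{n}}$ to such an equivariant splitting preserves the split injection on~$\pi_*$.

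The main obstacle is commuting the homotopy limit over~$n$ with rationalization, since in general neither split injections nor $-\tensor_\IZ\IQ$ pass through~$\holim_n$ on~$\pi_*$. This is where hypothesis~\assum{A}{\CF} enters. Under the $S^1$-equivariant decomposition of $\THH(\IA[G])$ along conjugacy classes, the summands contributing to the $\CF$-part after taking~$\Cp{n}$-homotopy orbits involve spectra built from~$BZ_GC$ for~$C\in\CF$, smashed with fixed building blocks independent of~$n$. The finite generation of each~$H_s(BZ_GC;\IZ)$ supplied by~\assum{A}{\CF} controls the $E^2$-page of the corresponding Atiyah-Hirzebruch spectral sequences uniformly in~$n$; this forces the Milnor~$\lim^1$-terms for the homotopy limit to vanish rationally on the relevant summands, so that the split injections at finite levels assemble to a $\pi_*^\IQ$-injection in the limit. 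Combined with the identification in the first paragraph, this gives the desired $\pi_*^\IQ$-injectivity of $\asbl_\CF$ for~$\C(\IA[-];p)$.
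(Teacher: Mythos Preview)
The paper does not give its own proof of this theorem; it simply records it as a key input and cites \cite[Theorem~1.19(ii)]{kc} for the argument. So there is no in-paper proof to compare against directly.

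That said, your outline is aligned with the actual proof in~\cite{kc}: one does reduce to towers of $\Cp{n}$-homotopy orbits of~$\THH$ via the natural Adams identification of~\cite{RV}, and one does use the $S^1$-equivariant conjugacy-class decomposition of~$\THH(\IA[G])$ to see that the assembly map for~$\CF$ is, level by level, the inclusion of a wedge summand. Your second step is correct, including the reason you give for why the splitting is $S^1$-equivariant.

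Where your sketch is thin is step three. Levelwise split injectivity does not formally imply $\pi_*^\IQ$-injectivity of the homotopy limit, and ``the $\lim^1$-terms vanish rationally'' is not by itself the mechanism that makes this work. What is actually used in~\cite{kc} is a more careful analysis of the summand indexed by a finite cyclic~$C$: one identifies it, up to natural $\pi_*$-isomorphism, with a tower built from $BZ_GC_+$ smashed with explicit $S^1$-spectra whose $\Cp{n}$-homotopy orbit homotopy groups are computed, and assumption~\assum{A}{\CF} is invoked to show that the resulting tower of homotopy groups is Mittag-Leffler after tensoring with~$\IQ$ \emph{and} that the relevant infinite products over conjugacy classes behave well rationally. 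Both of these points require more than a uniform bound on an $E^2$-page; in particular, one has to track how the Frobenius structure maps interact with the conjugacy-class decomposition (they shift between classes of $g$ and~$g^p$, which for elements of finite order eventually cycles). Your sketch would benefit from naming this explicitly rather than gesturing at Atiyah--Hirzebruch spectral sequences ``uniformly in~$n$''.
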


\begin{remark}
\label{rem:C-vs-TC}
In order to establish an analog of Theorem~\ref{thm:C-inj} for the assembly map
\begin{equation}
\label{eq:C-vs-TC}
\MOR{\asbl_\CF}{EG(\CF)_+\sma_{\Or G}\TC(\IA[\oid{G}{-};p])}{\TC(\IA[G];p)}
\end{equation}
in topological cyclic homology, we need to assume not only condition~\ref{our-A}, but also the following two conditions:
\begin{itemize}[leftmargin=2.75em]
\item[\assum{A'}{\CF}]
the family $\CF$ contains only finitely many conjugacy classes of subgroups;
\item[\assum{A''}{\CF}]
for every~$g\in G$, $\langle g\rangle\in\CF$ if and only if $\langle g^p\rangle\in\CF$.
\end{itemize}
The fact that the assembly map~\eqref{eq:C-vs-TC} is $\pi_*^\IQ$-injective under assumptions \assum{A}{\CF}, \assum{A'}{\CF}, and~\assum{A''}{\CF} is a special case of~\cite[Theorem~1.8]{tc}.
Notice the following facts.
\begin{enumerate}[label=(\roman*)]
\item
When $\CF=1$, assumption \assum{A'}{1} is vacuously true, but \assum{A''}{1} is \emph{not} satisfied if $G$ has $p$-torsion.
This is the reason why, in the proof of \BHM's Theorem~\ref{thm:BHM}, we need to work with~$\C$ and not just~$\TC$.

\item
As pointed out in Remark~\ref{rem:lrrv-main}\ref{i:on-A}, Thompson's group~$T$ satisfies~\assum{A}{\FCyc} and obviously also~\assum{A''}{\FCyc}.
However, $T$ contains finite cyclic subgroups of any given order, and therefore does not satisfy~\assum{A'}{\FCyc}.
It is an interesting open question whether the assembly map~\eqref{eq:C-vs-TC} is $\pi_*^\IQ$-injective for~$G=T$.

\item
Without homological finiteness assumptions on~$G$, the assembly map~\eqref{eq:C-vs-TC} is not rationally injective in general.
For example, if $G=\IQ$ and~$\CF=1=\FCyc$, then~\eqref{eq:C-vs-TC} is essentially trivial after applying~$\pi_*(-)\tensor_\IZ\IQ$.
This is explained in~\cite[Remark~3.7]{kc}.
Of course, the group~$G=\IQ$ does not satisfy~\ref{BHM-A}.
\end{enumerate}
\end{remark}

Finally, we mention the following two additional results about assembly maps for topological cyclic homology, which we proved in~\cite[Theorems~1.1, 1.4(ii), and~1.5]{tc}.

One should view Theorem~\ref{thm:tc-finite-groups} as a cyclic induction theorem for the topological cyclic homology of any finite group, with coefficients in any connective ring spectrum.
It allows to reduce the computation of~$\TC$ of any finite group to the case of the finite cyclic subgroups; this is carried out explicitly in~\cite[Proposition~1.2]{tc} for the basic case of the symmetric group on three elements.

Theorem~\ref{thm:tc-not-surj} studies the analog for~$\TC$ of the Farrell-Jones Conjecture~\ref{conj:FJ}.
For a large class of groups (for which Conjecture~\ref{conj:FJ} is already known; see Theorem~\ref{thm:state}), we prove that~$\asbl_\VCyc$ is injective, but surprisingly not surjective.

\begin{theorem}
\label{thm:tc-finite-groups}
\index{topological cyclic homology}
For any \emph{finite} group~$G$ the assembly map
\[
\MOR{\asbl_\Cyc}{EG(\Cyc)_+\sma_{\Or G}\TC(\IA[\oid{G}{-}];p)}{\TC(\IA[G];p)}
\]
is a $\pi_*$-isomorphism.
\end{theorem}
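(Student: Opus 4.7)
The strategy is to exploit the natural conjugacy class decomposition of topological Hochschild homology of a group algebra and to show it is compatible with the cyclotomic structure that defines $\TC$. For any finite group $G$ and any connective ring spectrum $\IA$, there is a natural $S^1$-equivariant splitting of $\THH(\IA[G])$ as a finite wedge indexed by the conjugacy classes $[g]\in\conj G$, in which the summand corresponding to $[g]$ is controlled by the cyclic subgroup $\langle g\rangle$ and its centralizer $Z_G\langle g\rangle$.

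The first step of the plan is to verify that this splitting persists after taking $\Cp{n}$-fixed points and is preserved by the Restriction and Frobenius maps $R,F$ of~\eqref{eq:RF}. Passing to the homotopy limit defining $\TC$ then yields a natural decomposition
\[
\TC(\IA[G];p) \simeq \bigvee_{[g]\in\conj G} \TC_{[g]}
\]
in which each summand depends only on data attached to $\langle g\rangle$ and $Z_G\langle g\rangle$. The second step is to compute the source $EG(\Cyc)_+ \sma_{\Or G} \TC(\IA[\oid{G}{-}];p)$ by the coend description from Subsection~\ref{subsec:assembly}, using a finite-dimensional $G$-CW model for $EG(\Cyc)$ whose cells have orbit type $G/C$ with $C\in\Cyc$. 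Since $\oid{G}{(G/C)}\simeq C$ one has $\TC(\IA[\oid{G}{(G/C)}];p)\simeq\TC(\IA[C];p)$, and the cellular filtration produces a parallel decomposition of the source. Finally, because every $g\in G$ lies in its own cyclic subgroup $\langle g\rangle$, naturality of the assembly map matches the two decompositions summand by summand, forcing $\asbl_\Cyc$ to be an equivalence on each piece.

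The hard part will be the first step: tracking the $S^1$-equivariant splitting through the genuine $\Cp{n}$-fixed points and through the subtle Restriction map $R$, whose construction is specific to $\THH$. If this direct approach proves too technical, a likely cleaner route is cellular induction along the skeleta of a $G$-CW model of $EG(\Cyc)$. Adjoining a single equivariant cell $G/C$ contributes, through the cofibre of the attaching map, exactly the summands of $\TC(\IA[G];p)$ labelled by those $[g]$ with $\langle g\rangle$ conjugate to $C$; this reformulation reduces the theorem to a cell-by-cell matching and uses only the $\THH$ assembly statement of Theorem~\ref{thm:thh-iso} as a black box.
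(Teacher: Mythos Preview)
First, a contextual note: this paper is a survey and does not itself prove Theorem~\ref{thm:tc-finite-groups}; it only quotes the result from~\cite{tc}. So there is no in-paper proof to compare against, and I evaluate your proposal on its own merits, informed by what the argument in~\cite{tc} actually requires.

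Your overall strategy---using the conjugacy-class splitting of $\THH(\IA[G])$---is indeed the right starting point and is the mechanism behind the proof in~\cite{tc}. But your first step contains a genuine error. The splitting by conjugacy classes is $S^1$-equivariant and is respected by the Frobenius~$F$ (inclusion of fixed points), but it is \emph{not} preserved summand-by-summand by the Restriction map~$R$. The cyclotomic structure that defines~$R$ intertwines the summand indexed by~$[g]$ with the summand indexed by~$[g^p]$. This is precisely the phenomenon encoded in condition~\assum{A''}{\CF} of Remark~\ref{rem:C-vs-TC}, and it is the reason why~$\C$ rather than~$\TC$ is used in the proof of Theorem~\ref{thm:BHM}. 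Consequently your claimed decomposition $\TC(\IA[G];p)\simeq\bigvee_{[g]}\TC_{[g]}$, with each~$\TC_{[g]}$ depending only on $\langle g\rangle$ and~$Z_G\langle g\rangle$, is false as stated: forming the homotopy equalizer of~$R$ and~$F$ ties together the summands along the orbits of the map $[g]\mapsto[g^p]$ on~$\conj G$, and these orbits can run through several distinct cyclic subgroups when $p$ divides the order of~$g$.

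Your fallback cellular-induction route does not escape this difficulty. You propose to use Theorem~\ref{thm:thh-iso} as a black box, but an isomorphism statement for~$\THH$ does not formally propagate to~$\TC$: the definition of~$\TC$ involves a homotopy limit over the tower of $\Cp{n}$-fixed points, and this does not commute with the homotopy colimit defining the source of the assembly map without further input. The actual proof in~\cite{tc} confronts exactly this: one must analyze how the $[g]\mapsto[g^p]$ dynamics on~$\conj G$ interacts with the contributions of the cyclic subgroups, and then use finiteness of~$G$ to control the relevant limits. What makes $\CF=\Cyc$ work is that cyclic subgroups are closed under taking $p$-th powers and $p$-th roots (condition~\assum{A''}{\Cyc} holds trivially), so the orbits of $[g]\mapsto[g^p]$ stay within a single conjugacy class of cyclic subgroups---but extracting the theorem from this observation still requires the careful bookkeeping that your proposal omits.
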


\begin{theorem}
\label{thm:tc-not-surj}
Assume that $G$ is either hyperbolic or virtually finitely generated abelian.
Then the assembly map
\[
\MOR{\asbl_\VCyc}{EG(\VCyc)_+\sma_{\Or G}\TC(\IA[\oid{G}{-}];p)}{\TC(\IA[G];p)}
\]
is always injective but in general not surjective on homotopy groups.
For example, it is not surjective on~$\pi_{-1}$ if $\IA=\IZ_{(p)}$ and $G$ is either finitely generated free abelian or torsion-free hyperbolic, but not cyclic.
\end{theorem}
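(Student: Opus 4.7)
My plan has two parts, addressing the injectivity and non-surjectivity assertions separately.

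The plan for injectivity is to bridge to algebraic $K$-theory via the cyclotomic trace. For $G$ hyperbolic or virtually finitely generated abelian, the Farrell-Jones Conjecture in algebraic $K$-theory is known (Theorem~\ref{thm:state}, including in the coefficient-category formulation indicated in its proof), so the $K$-theoretic assembly map $\asbl_\VCyc$ is a $\pi_*$-isomorphism. The assembly squares for $\K$ and $\TC$ fit together via the trace $\trc$. The main step is to construct, orbit-type by orbit-type in the Bredon decomposition of $EG(\VCyc)$, a retraction of $\trc_V\colon\K(\IA[V])\to\TC(\IA[V];p)$ for each $V\in\VCyc$, and to splice these orbit-wise retractions coherently into a retraction of the assembled map. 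For finite $V$ the retraction on $\TC$ is provided by Theorem~\ref{thm:tc-finite-groups} together with the structural analysis used in the proof of Theorem~\ref{thm:BHM}; for infinite virtually cyclic $V$ one uses the Bass-Heller-Swan-type decompositions of both $\K(\IA[V])$ and $\TC(\IA[V];p)$, matching linear summand to linear summand. Once the source of $\asbl_\VCyc$ for $\TC$ is exhibited as a retract of the source of $\asbl_\VCyc$ for $\K$ (which maps isomorphically onto $\K(\IA[G])$), injectivity of the $\TC$-assembly follows.

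The plan for non-surjectivity focuses on $\IA=\IZ_{(p)}$. First take $G=\IZ^n$ with $n\ge 2$, so that $\IA[G]=\IZ_{(p)}[t_1^{\pm 1},\ldots,t_n^{\pm 1}]$ is an iterated Laurent extension. Iterating the Bass-Heller-Swan-type decomposition for $\TC$ along the factors produces $n$-fold iterated $NK$-summands in $\TC(\IA[G];p)$. Crucially, since $\TC(\IZ_{(p)};p)$ is \emph{not} connective, unlike $\K(\IZ)$ these $NK$-summands contribute in negative degrees, in particular in $\pi_{-1}$. On the other hand, the source of $\asbl_\VCyc$ is computed via the equivariant Atiyah-Hirzebruch spectral sequence from the Bredon homology of $EG(\VCyc)$ with coefficients in $\pi_*\TC(\IA[\oid{G}{-}];p)$; the bookkeeping then shows that the $E_\infty$-terms in total degree $-1$ cannot account for a specific iterated $NK$-class in the target. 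For non-cyclic torsion-free hyperbolic $G$, the plan is to split $G$ along an infinite cyclic subgroup via Bass-Serre or JSJ methods; the resulting HNN or amalgamation description of $\IA[G]$ yields a Mayer-Vietoris for $\TC$ which transports an $NK$-class of $\pi_{-1}$ from the cyclic-subgroup calculation into an obstruction to surjectivity of $\asbl_\VCyc$ on $G$.

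The subtlest part of the plan is the injectivity step: the orbit-wise retractions of the trace must be arranged coherently across the full Bredon diagram to splice into a global retraction, and one must ensure the Bass-Heller-Swan splittings of $\K$ and $\TC$ can be chosen compatibly with $\trc$. For the non-surjectivity step the main hurdle is the torsion-free hyperbolic case, where the absence of an iterated Laurent model for $\IA[G]$ forces one to track an $NK$-witness across a Bass-Serre decomposition and through the relative assembly $\asbl_{\Fin\subseteq\VCyc}$, requiring care to ensure the obstruction class survives the reduction.
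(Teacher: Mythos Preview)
First, note that the paper does not actually give a proof of this theorem: it only states the result and cites \cite[Theorems~1.1, 1.4(ii), and~1.5]{tc}, so there is no in-paper argument to compare against directly. That said, your proposal has a genuine gap in the injectivity half that would prevent it from going through as written.

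Your injectivity plan hinges on producing, for each virtually cyclic $V$, a retraction of $\trc_V\colon\K(\IA[V])\to\TC(\IA[V];p)$, and then concluding that the source of the $\TC$-assembly is a retract of the source of the $\K$-assembly. There are two problems here. First, the cyclotomic trace does not admit a natural splitting in either direction: $\TC(\IA[V];p)$ is (after mild hypotheses) $p$-complete while $\K(\IA[V])$ is not, and their homotopy groups differ already for $V=1$ and $\IA=\IS$ or $\IZ_{(p)}$, so neither $\K$ is a retract of $\TC$ nor conversely. Second, even granting such orbit-wise splittings, your conclusion does not follow: exhibiting the source of $\asbl_\VCyc^{\TC}$ as a retract of the source of $\asbl_\VCyc^{\K}\simeq\K(\IA[G])$ would only tell you that the source of the $\TC$-assembly injects into $\K(\IA[G])$, not into $\TC(\IA[G];p)$. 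You would need the retraction to be compatible with the \emph{targets} as well, which brings you back to needing a global splitting of $\trc$ on $G$ itself. The actual argument in \cite{tc} does not route through the trace at all; it works directly with $\TC$, exploiting geometric finiteness properties of models for $EG(\VCyc)$ available for hyperbolic and virtually finitely generated abelian groups, together with induction and splitting results intrinsic to $\THH$ and $\TC$ of group rings.

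For non-surjectivity, your instinct that the nontrivial negative homotopy of $\TC(\IZ_{(p)};p)$ is the source of the failure is correct, and the $\IZ^n$ case can indeed be analyzed via the equivariant Atiyah-Hirzebruch spectral sequence and the explicit pushout model for $E\IZ^n(\Cyc)$ used in the proof of Corollary after Theorem~\ref{thm:FJ-Zn}. The torsion-free hyperbolic case, however, is not handled by JSJ or Bass--Serre splittings in \cite{tc}; hyperbolic groups need not split nontrivially over cyclic subgroups, so that route does not work in general. The argument there again uses the specific structure of $EG(\VCyc)$ for hyperbolic groups.
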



\bigskip
\bigskip
\bigskip

\bibliographystyle{alpha}
\bibliography{survey}

\newcommand{\etalchar}[1]{$^{#1}$}
\begin{thebibliography}{LRRV17b}

\bibitem[AFR00]{AFR}
C.~S. Aravinda, F.~Thomas Farrell, and S.~K. Roushon.
\newblock Algebraic {$K$}-theory of pure braid groups.
\newblock {\em Asian J. Math.}, 4(2):337--343, 2000.
\newblock \hDOI{10.4310/AJM.2000.v4.n2.a4}. \MR{1797585}.

\bibitem[Bar03a]{Bartels-domain}
Arthur Bartels.
\newblock On the domain of the assembly map in algebraic {$K$}-theory.
\newblock {\em Algebr. Geom. Topol.}, 3:1037--1050, 2003.
\newblock \hDOI{10.2140/agt.2003.3.1037}. \MR{2012963}.

\bibitem[Bar03b]{Bartels-squeezing}
Arthur Bartels.
\newblock Squeezing and higher algebraic {$K$}-theory.
\newblock {\em $K$-Theory}, 28(1):19--37, 2003.
\newblock \hDOI{10.1023/A:1024166521174}. \MR{1988817}.

\bibitem[Bar16]{Bartels-proofs}
Arthur Bartels.
\newblock On proofs of the {F}arrell-{J}ones conjecture.
\newblock In {\em Topology and geometric group theory}, volume 184 of {\em
  Springer Proc. Math. Stat.}, pages 1--31. Springer, Cham, 2016.
\newblock \hDOI{10.1007/978-3-319-43674-6_1}. \MR{3598160}.

\bibitem[Bar17]{B-rel-hyp}
Arthur Bartels.
\newblock Coarse flow spaces for relatively hyperbolic groups.
\newblock {\em Compos. Math.}, 153(4):745--779, 2017.
\newblock \hDOI{10.1112/S0010437X16008216}. \MR{3631229}.

\bibitem[Bas64]{Bass-free-groups}
Hyman Bass.
\newblock Projective modules over free groups are free.
\newblock {\em J. Algebra}, 1:367--373, 1964.
\newblock \hDOI{10.1016/0021-8693(64)90016-X}. \MR{0178032}.

\bibitem[Bas68]{Bass-book}
Hyman Bass.
\newblock {\em Algebraic {$K$}-theory}.
\newblock W. A. Benjamin, Inc., New York-Amsterdam, 1968.
\newblock \MR{0249491}.

\bibitem[BB16]{BB-MCG}
Arthur Bartels and Mladen Bestvina.
\newblock The {F}arrell-{J}ones conjecture for mapping class groups.
\newblock Preprint, available at \arXiv{1606.02844}, 2016.

\bibitem[BCH94]{Baum-Connes}
Paul Baum, Alain Connes, and Nigel Higson.
\newblock Classifying space for proper actions and {$K$}-theory of group
  {$C^\ast$}-algebras.
\newblock In {\em {$C^\ast$}-algebras: 1943--1993 ({S}an {A}ntonio, {TX},
  1993)}, volume 167 of {\em Contemp. Math.}, pages 240--291. Amer. Math. Soc.,
  Providence, RI, 1994.
\newblock \hDOI{10.1090/conm/167/1292018}. \MR{1292018}.

\bibitem[BEL08]{BEchterhoffL}
Arthur Bartels, Siegfried Echterhoff, and Wolfgang L{\"u}ck.
\newblock Inheritance of isomorphism conjectures under colimits.
\newblock In {\em {$K$}-theory and noncommutative geometry}, EMS Ser. Congr.
  Rep., pages 41--70. Eur. Math. Soc., Z\"urich, 2008.
\newblock \hDOI{10.4171/060-1/2}. \MR{2513332}.

\bibitem[BFJR04]{BFJR}
Arthur Bartels, F.~Thomas Farrell, Lowell~E. Jones, and Holger Reich.
\newblock On the isomorphism conjecture in algebraic {$K$}-theory.
\newblock {\em Topology}, 43(1):157--213, 2004.
\newblock \hDOI{10.1016/S0040-9383(03)00032-6}. \MR{2030590}.

\bibitem[BFL14]{BFL-lattices}
Arthur Bartels, F.~Thomas Farrell, and Wolfgang L{\"u}ck.
\newblock The {F}arrell-{J}ones {C}onjecture for cocompact lattices in
  virtually connected {L}ie groups.
\newblock {\em J. Amer. Math. Soc.}, 27(2):339--388, 2014.
\newblock \hDOI{10.1090/S0894-0347-2014-00782-7}. \MR{3164984}.

\bibitem[BHM93]{BHM}
Marcel B{\"o}kstedt, Wu~Chung Hsiang, and Ib~Madsen.
\newblock The cyclotomic trace and algebraic {$K$}-theory of spaces.
\newblock {\em Invent. Math.}, 111(3):465--539, 1993.
\newblock \hDOI{10.1007/BF01231296}. \MR{1202133}.

\bibitem[BHS64]{BHS}
Hyman Bass, Alex Heller, and Richard~G. Swan.
\newblock The {W}hitehead group of a polynomial extension.
\newblock {\em Publ. Math. Inst. Hautes \'Etudes Sci.}, (22):61--79, 1964.
\newblock \MR{0174605}.

\bibitem[BL12a]{BL-Borel}
Arthur Bartels and Wolfgang L{\"u}ck.
\newblock The {B}orel {C}onjecture for hyperbolic and {CAT(0)}-groups.
\newblock {\em Ann. of Math. (2)}, 175(2):631--689, 2012.
\newblock \hDOI{10.4007/annals.2012.175.2.5}. \MR{2993750}.

\bibitem[BL12b]{BL-FH}
Arthur Bartels and Wolfgang L{\"u}ck.
\newblock The {F}arrell-{H}siang method revisited.
\newblock {\em Math. Ann.}, 354(1):209--226, 2012.
\newblock \hDOI{10.1007/s00208-011-0727-3}. \MR{2957625}.

\bibitem[BLR08]{BLR-Inventiones}
Arthur Bartels, Wolfgang L{\"u}ck, and Holger Reich.
\newblock The {$K$}-theoretic {F}arrell-{J}ones {C}onjecture for hyperbolic
  groups.
\newblock {\em Invent. Math.}, 172(1):29--70, 2008.
\newblock \hDOI{10.1007/s00222-007-0093-7}. \MR{2385666}.

\bibitem[BLRR14]{BLRR}
Arthur Bartels, Wolfgang L{\"u}ck, Holger Reich, and Henrik R{\"u}ping.
\newblock K- and {L}-theory of group rings over {$GL\sb n({\mathbb{Z}})$}.
\newblock {\em Publ. Math. Inst. Hautes \'Etudes Sci.}, 119:97--125, 2014.
\newblock \hDOI{10.1007/s10240-013-0055-0}. \MR{3210177}.

\bibitem[B{\"o}k86]{B-THH}
Marcel B{\"o}kstedt.
\newblock Topological {H}ochschild homology.
\newblock Preprint, 1986.

\bibitem[BR07a]{BR-coefficients}
Arthur Bartels and Holger Reich.
\newblock Coefficients for the {F}arrell-{J}ones {C}onjecture.
\newblock {\em Adv. Math.}, 209(1):337--362, 2007.
\newblock \hDOI{10.1016/j.aim.2006.05.005}. \MR{2294225}.

\bibitem[BR07b]{BRosenthal}
Arthur Bartels and David Rosenthal.
\newblock On the {$K$}-theory of groups with finite asymptotic dimension.
\newblock {\em J. Reine Angew. Math.}, 612:35--57, 2007.
\newblock \hDOI{10.1515/CRELLE.2007.083}. \MR{2364073}.

\bibitem[BR17]{BRosenthal-Erratum}
Arthur Bartels and David Rosenthal.
\newblock Erratum to ``{O}n the {$K$}-theory of groups with finite asymptotic
  dimension''.
\newblock {\em J. Reine Angew. Math.}, 726:291--292, 2017.
\newblock \hDOI{10.1515/crelle-2016-0040}. \MR{3641660}.

\bibitem[CG04]{CG}
Gunnar Carlsson and Boris Goldfarb.
\newblock The integral {$K$}-theoretic {N}ovikov {C}onjecture for groups with
  finite asymptotic dimension.
\newblock {\em Invent. Math.}, 157(2):405--418, 2004.
\newblock \hDOI{10.1007/s00222-003-0356-x}. \MR{2076928}.

\bibitem[CG05]{CG-Addendum}
Gunnar Carlsson and Boris Goldfarb.
\newblock Addendum: ``{T}he integral {$K$}-theoretic {N}ovikov {C}onjecture for
  groups with finite asymptotic dimension''.
\newblock {\em Invent. Math.}, 159(3):677, 2005.
\newblock \hDOI{10.1007/s00222-004-0401-4}. \MR{2125739}.

\bibitem[CMR07]{Cuntz-Rosenberg}
Joachim Cuntz, Ralf Meyer, and Jonathan~M. Rosenberg.
\newblock {\em Topological and bivariant {$K$}-theory}, volume~36 of {\em
  Oberwolfach Seminars}.
\newblock Birkh\"auser, Basel, 2007.
\newblock \hDOI{10.1007/978-3-7643-8399-2}. \MR{2340673}.

\bibitem[Cor11]{Cortinas-friendly}
Guillermo Corti{\~n}as.
\newblock Algebraic v.\ topological {$K$}-theory: a friendly match.
\newblock In {\em Topics in algebraic and topological {$K$}-theory}, volume
  2008 of {\em Lecture Notes in Math.}, pages 103--165. Springer, Berlin, 2011.
\newblock \hDOI{10.1007/978-3-642-15708-0_3}. \MR{2762555}.

\bibitem[CP95]{CP}
Gunnar Carlsson and Erik~Kj{\ae}r Pedersen.
\newblock Controlled algebra and the {N}ovikov {C}onjectures for {$K$}- and
  {$L$}-theory.
\newblock {\em Topology}, 34(3):731--758, 1995.
\newblock \hDOI{10.1016/0040-9383(94)00033-H}. \MR{1341817}.

\bibitem[Dav08]{Davis-book}
Michael~W. Davis.
\newblock {\em The geometry and topology of {C}oxeter groups}, volume~32 of
  {\em London Mathematical Society Monographs Series}.
\newblock Princeton University Press, Princeton, NJ, 2008.
\newblock \MR{2360474}.

\bibitem[DGM13]{DGM-book}
Bj{\o}rn~Ian Dundas, Thomas~G. Goodwillie, and Randy McCarthy.
\newblock {\em The local structure of algebraic {K}-theory}, volume~18 of {\em
  Algebra and Applications}.
\newblock Springer, London, 2013.
\newblock \hDOI{10.1007/978-1-4471-4393-2}. \MR{3013261}.

\bibitem[DL98]{Davis-L}
James~F. Davis and Wolfgang L{\"u}ck.
\newblock Spaces over a category and assembly maps in isomorphism conjectures
  in {$K$}- and {$L$}-theory.
\newblock {\em $K$-Theory}, 15(3):201--252, 1998.
\newblock \hDOI{10.1023/A:1007784106877}. \MR{1659969}.

\bibitem[Dri04]{Drinfeld}
Vladimir Drinfeld.
\newblock On the notion of geometric realization.
\newblock {\em Mosc. Math. J.}, 4(3):619--626, 782, 2004.
\newblock \MR{2119142}.

\bibitem[Dun72]{Dunwoody}
Martin~J. Dunwoody.
\newblock Relation modules.
\newblock {\em Bull. London Math. Soc.}, 4:151--155, 1972.
\newblock \hDOI{10.1112/blms/4.2.151}. \MR{0327915}.

\bibitem[ELP{\etalchar{+}}16]{ELPUW}
Nils-Edvin Enkelmann, Wolfgang L\"uck, Malte Pieper, Mark Ullmann, and
  Christoph Winges.
\newblock On the {F}arrell-{J}ones {C}onjecture for {W}aldhausen's
  ${A}$-theory.
\newblock Preprint, available at \arXiv{1607.06395}, 2016.

\bibitem[Fer77]{Ferry}
Steven~C. Ferry.
\newblock The homeomorphism group of a compact {H}ilbert cube manifold is an
  {ANR}.
\newblock {\em Ann. of Math. (2)}, 106(1):101--119, 1977.
\newblock \hDOI{10.2307/1971161}. \MR{0461536}.

\bibitem[FH78]{FH-space-form}
F.~Thomas Farrell and Wu~Chung Hsiang.
\newblock The topological-{E}uclidean space form problem.
\newblock {\em Invent. Math.}, 45(2):181--192, 1978.
\newblock \hDOI{10.1007/BF01390272}. \MR{0482771}.

\bibitem[FH81a]{FH-Novikov}
F.~Thomas Farrell and Wu~Chung Hsiang.
\newblock On {N}ovikov's {C}onjecture for nonpositively curved manifolds. {I}.
\newblock {\em Ann. of Math. (2)}, 113(1):199--209, 1981.
\newblock \hDOI{10.2307/1971138}. \MR{604047}.

\bibitem[FH81b]{FH-poly}
F.~Thomas Farrell and Wu~Chung Hsiang.
\newblock The {W}hitehead group of poly-(finite or cyclic) groups.
\newblock {\em J. London Math. Soc. (2)}, 24(2):308--324, 1981.
\newblock \hDOI{10.1112/jlms/s2-24.2.308}. \MR{631942}.

\bibitem[FH83]{FH-flat}
F.~Thomas Farrell and Wu~Chung Hsiang.
\newblock Topological characterization of flat and almost flat {R}iemannian
  manifolds ${M}^n$ ($n\neq3,\,4$).
\newblock {\em Amer. J. Math.}, 105(3):641--672, 1983.
\newblock \hDOI{10.2307/2374318}. \MR{704219}.

\bibitem[FJ86]{FJ-K-dynamics}
F.~Thomas Farrell and Lowell~E. Jones.
\newblock {$K$}-theory and dynamics. {I}.
\newblock {\em Ann. of Math. (2)}, 124(3):531--569, 1986.
\newblock \hDOI{10.2307/2007092}. \MR{866708}.

\bibitem[FJ89]{FJ-Mostow}
F.~Thomas Farrell and Lowell~E. Jones.
\newblock A topological analogue of {M}ostow's rigidity theorem.
\newblock {\em J. Amer. Math. Soc.}, 2(2):257--370, 1989.
\newblock \hDOI{10.2307/1990978}. \MR{973309}.

\bibitem[FJ93a]{FJ-iso}
F.~Thomas Farrell and Lowell~E. Jones.
\newblock Isomorphism {C}onjectures in algebraic {$K$}-theory.
\newblock {\em J. Amer. Math. Soc.}, 6(2):249--297, 1993.
\newblock \hDOI{10.2307/2152801}. \MR{1179537}.

\bibitem[FJ93b]{FJ-non-pos}
F.~Thomas Farrell and Lowell~E. Jones.
\newblock Topological rigidity for compact non-positively curved manifolds.
\newblock In {\em Differential geometry: {R}iemannian geometry ({L}os
  {A}ngeles, {CA}, 1990)}, volume~54 of {\em Proc. Sympos. Pure Math.}, pages
  229--274. Amer. Math. Soc., Providence, RI, 1993.
\newblock \MR{1216623}.

\bibitem[FJ95]{FJ-vcyc}
F.~Thomas Farrell and Lowell~E. Jones.
\newblock The lower algebraic {$K$}-theory of virtually infinite cyclic groups.
\newblock {\em $K$-Theory}, 9(1):13--30, 1995.
\newblock \hDOI{10.1007/BF00965457}. \MR{1340838}.

\bibitem[FRR95]{FRR}
Steven~C. Ferry, Andrew Ranicki, and Jonathan Rosenberg, editors.
\newblock {\em Novikov {C}onjectures, index theorems and rigidity. {V}ol.~1
  and~2}, volume 226 and 227 of {\em London Math.\ Soc.\ Lecture Note Ser.}
\newblock Cambridge Univ.\ Press, Cambridge, 1995.
\newblock \hDOI{10.1017/CBO9780511662676}, \hDOI{10.1017/CBO9780511629365}.
  \MR{1388294}, \MR{1388306}.

\bibitem[FW91]{Ferry-Weinberger}
Steven~C. Ferry and Shmuel Weinberger.
\newblock Curvature, tangentiality, and controlled topology.
\newblock {\em Invent. Math.}, 105(2):401--414, 1991.
\newblock \hDOI{10.1007/BF01232272}. \MR{1115548}.

\bibitem[FW14]{Farrell-Wu}
F.~Thomas Farrell and Xiaolei Wu.
\newblock The {F}arrell-{J}ones {C}onjecture for the solvable
  {B}aumslag-{S}olitar groups.
\newblock {\em Math. Ann.}, 359(3-4):839--862, 2014.
\newblock \hDOI{10.1007/s00208-014-1021-y}. \MR{3231018}.

\bibitem[GMR15]{GMeinertR}
Giovanni Gandini, Sebastian Meinert, and Henrik R{\"u}ping.
\newblock The {F}arrell-{J}ones {C}onjecture for fundamental groups of graphs
  of abelian groups.
\newblock {\em Groups Geom. Dyn.}, 9(3):783--792, 2015.
\newblock \hDOI{10.4171/GGD/327}. \MR{3420543}.

\bibitem[GR13]{Gandini-Rueping}
Giovanni Gandini and Henrik R{\"u}ping.
\newblock The {F}arrell-{J}ones {C}onjecture for graph products.
\newblock {\em Algebr. Geom. Topol.}, 13(6):3651--3660, 2013.
\newblock \hDOI{10.2140/agt.2013.13.3651}. \MR{3248744}.

\bibitem[Gru08]{Grunewald}
Joachim Grunewald.
\newblock The behavior of {N}il-groups under localization and the relative
  assembly map.
\newblock {\em Topology}, 47(3):160--202, 2008.
\newblock \hDOI{10.1016/j.top.2007.03.007}. \MR{2414359}.

\bibitem[Gub88]{Gubeladze}
Joseph Gubeladze.
\newblock The {A}nderson conjecture and a maximal class of monoids over which
  projective modules are free.
\newblock {\em Mat. Sb. (N.S.)}, 135(177)(2):169--185, 271, 1988.
\newblock \MR{937805}.

\bibitem[GV17]{GV}
Ross Geoghegan and Marco Varisco.
\newblock On {T}hompson's group~${T}$ and algebraic ${K}$-theory.
\newblock In {\em Geometric and Cohomological Group Theory}, volume 444 of {\em
  London Math. Soc. Lecture Note Ser.}, pages 34--45. Cambridge Univ. Press,
  Cambridge, 2017.
\newblock \hDOI{10.1017/9781316771327.005}.

\bibitem[Hes05]{Hesselholt-handbook}
Lars Hesselholt.
\newblock {$K$}-theory of truncated polynomial algebras.
\newblock In {\em Handbook of {$K$}-theory. {V}ol.~1}, pages 71--110. Springer,
  Berlin, 2005.
\newblock \hDOI{10.1007/3-540-27855-9_3}. \MR{2181821}.

\bibitem[Hig40]{Higman1940}
Graham Higman.
\newblock The units of group-rings.
\newblock {\em Proc. London Math. Soc. (2)}, 46:231--248, 1940.
\newblock \hDOI{10.1112/plms/s2-46.1.231}. \MR{0002137}.

\bibitem[HM97]{HM}
Lars Hesselholt and Ib~Madsen.
\newblock On the {$K$}-theory of finite algebras over {W}itt vectors of perfect
  fields.
\newblock {\em Topology}, 36(1):29--101, 1997.
\newblock \hDOI{10.1016/0040-9383(96)00003-1}. \MR{1410465}.

\bibitem[HP04]{Hambleton-Pedersen}
Ian Hambleton and Erik~K. Pedersen.
\newblock Identifying assembly maps in {$K$}- and {$L$}-theory.
\newblock {\em Math. Ann.}, 328(1-2):27--57, 2004.
\newblock \hDOI{10.1007/s00208-003-0454-5}. \MR{2030369}.

\bibitem[Hsi84]{Hsiang-ICM}
Wu~Chung Hsiang.
\newblock Geometric applications of algebraic {$K$}-theory.
\newblock In {\em Proceedings of the {I}nternational {C}ongress of
  {M}athematicians, {V}ol.\ 1, 2 ({W}arsaw, 1983)}, pages 99--118. PWN, Warsaw,
  1984.
\newblock \MR{804679}.

\bibitem[Jon87]{Jones}
John D.~S. Jones.
\newblock Cyclic homology and equivariant homology.
\newblock {\em Invent. Math.}, 87(2):403--423, 1987.
\newblock \hDOI{10.1007/BF01389424}. \MR{870737}.

\bibitem[Kap57]{Kaplansky1956}
Irving Kaplansky.
\newblock {\em Problems in the theory of rings. {R}eport of a conference on
  linear algebras, {J}une, 1956, pp. 1-3}.
\newblock National Academy of Sciences-National Research Council, Washington,
  Publ. 502, 1957.
\newblock \MR{0096696}.

\bibitem[Kap70]{Kaplansky1970}
Irving Kaplansky.
\newblock ``{P}roblems in the theory of rings'' revisited.
\newblock {\em Amer. Math. Monthly}, 77:445--454, 1970.
\newblock \hDOI{10.2307/2317376}. \MR{0258865}.

\bibitem[Kas15]{Kasprowski}
Daniel Kasprowski.
\newblock On the {$K$}-theory of groups with finite decomposition complexity.
\newblock {\em Proc. Lond. Math. Soc. (3)}, 110(3):565--592, 2015.
\newblock \hDOI{10.1112/plms/pdu062}. \MR{3342098}.

\bibitem[KL05]{Kreck-Lueck-Novikov}
Matthias Kreck and Wolfgang L\"uck.
\newblock {\em The {N}ovikov {C}onjecture. {G}eometry and algebra}, volume~33
  of {\em Oberwolfach Seminars}.
\newblock Birkh\"auser, Basel, 2005.
\newblock \hDOI{10.1007/b137100}. \MR{2117411}.

\bibitem[KLR16]{Kammeyer-etal}
Holger Kammeyer, Wolfgang L{\"u}ck, and Henrik R{\"u}ping.
\newblock The {F}arrell--{J}ones {C}onjecture for arbitrary lattices in
  virtually connected {L}ie groups.
\newblock {\em Geom. Topol.}, 20(3):1275--1287, 2016.
\newblock \hDOI{10.2140/gt.2016.20.1275}. \MR{3523058}.

\bibitem[KM91]{Kropholler-Moselle}
Peter~H. Kropholler and Boaz Moselle.
\newblock A family of crystallographic groups with {$2$}-torsion in {$K_0$} of
  the rational group algebra.
\newblock {\em Proc. Edinburgh Math. Soc. (2)}, 34(2):325--331, 1991.
\newblock \hDOI{10.1017/S0013091500007215}. \MR{1115651}.

\bibitem[KNR18]{KNR}
Daniel Kasprowski, Andrew Nicas, and David Rosenthal.
\newblock Regular finite decomposition complexity.
\newblock {\em J. Topol. Anal.}, 2018.
\newblock \hDOI{10.1142/S1793525319500286}.

\bibitem[Lam99]{Lam}
Tsit-Yuen Lam.
\newblock {\em Lectures on modules and rings}, volume 189 of {\em Graduate
  Texts in Mathematics}.
\newblock Springer, New York, 1999.
\newblock \hDOI{10.1007/978-1-4612-0525-8}. \MR{1653294}.

\bibitem[Lam06]{Lam-Serre-problem}
Tsit-Yuen Lam.
\newblock {\em Serre's problem on projective modules}.
\newblock Springer Monographs in Mathematics. Springer, Berlin, 2006.
\newblock \hDOI{10.1007/978-3-540-34575-6}. \MR{2235330}.

\bibitem[LN17]{Land-Nikolaus}
Markus Land and Thomas Nikolaus.
\newblock On the relation between ${K}$- and ${L}$-theory of ${C}^*$-algebras.
\newblock {\em Math. Ann.}, 2017.
\newblock \hDOI{10.1007/s00208-017-1617-0}.

\bibitem[Lod76]{Loday}
Jean-Louis Loday.
\newblock {$K$}-th\'eorie alg\'ebrique et repr\'esentations de groupes.
\newblock {\em Ann. Sci. \'Ecole Norm. Sup. (4)}, 9(3):309--377, 1976.
\newblock \hDOI{10.24033/asens.1312}. \MR{0447373}.

\bibitem[LR05]{LR-survey}
Wolfgang L{\"u}ck and Holger Reich.
\newblock The {B}aum-{C}onnes and the {F}arrell-{J}ones {C}onjectures in {$K$}-
  and {$L$}-theory.
\newblock In {\em Handbook of {$K$}-theory. {V}ol.~2}, pages 703--842.
  Springer, Berlin, 2005.
\newblock \hDOI{10.1007/978-3-540-27855-9_15}. \MR{2181833}.

\bibitem[LR14]{LRosenthal}
Wolfgang L{\"u}ck and David Rosenthal.
\newblock On the {$K$}- and {$L$}-theory of hyperbolic and virtually finitely
  generated abelian groups.
\newblock {\em Forum Math.}, 26(5):1565--1609, 2014.
\newblock \hDOI{10.1515/forum-2011-0146}. \MR{3334038}.

\bibitem[LRRV17a]{kc}
Wolfgang L\"uck, Holger Reich, John Rognes, and Marco Varisco.
\newblock Algebraic {K}-theory of group rings and the cyclotomic trace map.
\newblock {\em Adv. Math.}, 304:930--1020, 2017.
\newblock \hDOI{10.1016/j.aim.2016.09.004}. \MR{3558224}.

\bibitem[LRRV17b]{tc}
Wolfgang L{\"u}ck, Holger Reich, John Rognes, and Marco Varisco.
\newblock Assembly maps for topological cyclic homology of group algebras.
\newblock {\em J. Reine Angew. Math.}, 2017.
\newblock \hDOI{10.1515/crelle-2017-0023}.

\bibitem[LS16]{L-Steimle}
Wolfgang L{\"u}ck and Wolfgang Steimle.
\newblock Splitting the relative assembly map, {N}il-terms and involutions.
\newblock {\em Ann. K-Theory}, 1(4):339--377, 2016.
\newblock \hDOI{10.2140/akt.2016.1.339}. \MR{3536432}.

\bibitem[L{\"u}c]{L-book-project}
Wolfgang L{\"u}ck.
\newblock Isomorphism {C}onjectures in ${K}$- and ${L}$-{T}heory.
\newblock In preparation, preliminary version available at
  \hurl{him.uni-bonn.de/lueck/}.

\bibitem[L{\"u}c02]{L-Chern}
Wolfgang L{\"u}ck.
\newblock Chern characters for proper equivariant homology theories and
  applications to {$K$}- and {$L$}-theory.
\newblock {\em J. Reine Angew. Math.}, 543:193--234, 2002.
\newblock \hDOI{10.1515/crll.2002.015}. \MR{1887884}.

\bibitem[L{\"u}c10]{L-ICM}
Wolfgang L{\"u}ck.
\newblock {$K$}- and {$L$}-theory of group rings.
\newblock In {\em Proceedings of the {I}nternational {C}ongress of
  {M}athematicians. {V}olume {II}}, pages 1071--1098. Hindustan Book Agency,
  New Delhi, 2010.
\newblock \hDOI{10.1142/9789814324359_0087}. \MR{2827832}.

\bibitem[Mac71]{MacLane}
Saunders MacLane.
\newblock {\em Categories for the working mathematician}, volume~5 of {\em
  Graduate Texts in Mathematics}.
\newblock Springer, New York, 1971.
\newblock \hDOI{10.1007/978-1-4612-9839-7}. \MR{0354798}.

\bibitem[Mad94]{Madsen}
Ib~Madsen.
\newblock Algebraic {$K$}-theory and traces.
\newblock In {\em Current developments in mathematics, 1995 ({C}ambridge,
  {MA})}, pages 191--321. Int. Press, Cambridge, MA, 1994.
\newblock \MR{1474979}.

\bibitem[Mil65]{Milnor-h-cobordism}
John Milnor.
\newblock {\em Lectures on the {$h$}-cobordism theorem}.
\newblock Princeton University Press, Princeton, N.J., 1965.
\newblock \MR{0190942}.

\bibitem[Mil71]{Milnor-book}
John Milnor.
\newblock {\em Introduction to algebraic {$K$}-theory}.
\newblock Annals of Mathematics Studies, No. 72. Princeton University Press,
  Princeton, N.J., 1971.
\newblock \MR{0349811}.

\bibitem[Mon69]{Montgomery}
M.~Susan Montgomery.
\newblock Left and right inverses in group algebras.
\newblock {\em Bull. Amer. Math. Soc.}, 75:539--540, 1969.
\newblock \hDOI{10.1090/S0002-9904-1969-12234-2}. \MR{0238967}.

\bibitem[MV03]{Mislin-Valette}
Guido Mislin and Alain Valette.
\newblock {\em Proper group actions and the {B}aum-{C}onnes {C}onjecture}.
\newblock Advanced Courses in Mathematics. CRM Barcelona. Birkh\"auser, Basel,
  2003.
\newblock \hDOI{10.1007/978-3-0348-8089-3}. \MR{2027168}.

\bibitem[Pat14]{Patronas}
Dimitrios Patronas.
\newblock {\em The {A}rtin defect in algebraic $K$-theory}.
\newblock PhD thesis, Freie Universit\"at Berlin, 2014.
\newblock Available at
  \hurl{diss.fu-berlin.de/diss/receive/FUDISS_thesis_000000097443}.

\bibitem[PRV]{PRV}
Dimitrios Patronas, Holger Reich, and Marco Varisco.
\newblock In preparation.

\bibitem[Qui79]{Quinn-Ends}
Frank Quinn.
\newblock Ends of maps. {I}.
\newblock {\em Ann. of Math. (2)}, 110(2):275--331, 1979.
\newblock \hDOI{10.2307/1971262}. \MR{549490}.

\bibitem[Qui82]{Quinn-EndsII}
Frank Quinn.
\newblock Ends of maps. {II}.
\newblock {\em Invent. Math.}, 68(3):353--424, 1982.
\newblock \hDOI{10.1007/BF01389410}. \MR{669423}.

\bibitem[Qui12]{Quinn-virtually-abelian}
Frank Quinn.
\newblock Algebraic {$K$}-theory over virtually abelian groups.
\newblock {\em J. Pure Appl. Algebra}, 216(1):170--183, 2012.
\newblock \hDOI{10.1016/j.jpaa.2011.06.001}. \MR{2826431}.

\bibitem[Ros94]{Rosenberg}
Jonathan Rosenberg.
\newblock {\em Algebraic {$K$}-theory and its applications}, volume 147 of {\em
  Graduate Texts in Mathematics}.
\newblock Springer, New York, 1994.
\newblock \hDOI{10.1007/978-1-4612-4314-4}. \MR{1282290}.

\bibitem[Ros95]{Rosenberg-for-topologists}
Jonathan Rosenberg.
\newblock Analytic {N}ovikov for topologists.
\newblock In {\em Novikov {C}onjectures, index theorems and rigidity, {V}ol.\ 1
  ({O}berwolfach, 1993)}, volume 226 of {\em London Math. Soc. Lecture Note
  Ser.}, pages 338--372. Cambridge Univ. Press, Cambridge, 1995.
\newblock \hDOI{10.1017/CBO9780511662676.013}. \MR{1388305}.

\bibitem[Rou07]{Roushon-wreath}
S.~K. Roushon.
\newblock Algebraic {$K$}-theory of groups wreath product with finite groups.
\newblock {\em Topology Appl.}, 154(9):1921--1930, 2007.
\newblock \hDOI{10.1016/j.topol.2007.01.019}. \MR{2319263}.

\bibitem[Rou08]{Roushon-3mfds}
S.~K. Roushon.
\newblock The {F}arrell-{J}ones isomorphism conjecture for 3-manifold groups.
\newblock {\em J. K-Theory}, 1(1):49--82, 2008.
\newblock \hDOI{10.1017/is007011012jkt005}. \MR{2424566}.

\bibitem[RTY14]{RTY}
Daniel~A. Ramras, Romain Tessera, and Guoliang Yu.
\newblock Finite decomposition complexity and the integral {N}ovikov
  {C}onjecture for higher algebraic {$K$}-theory.
\newblock {\em J. Reine Angew. Math.}, 694:129--178, 2014.
\newblock \hDOI{10.1515/crelle-2012-0112}. \MR{3259041}.

\bibitem[R{\"u}p16]{Rueping-S-arithmetic}
Henrik R{\"u}ping.
\newblock The {F}arrell-{J}ones {C}onjecture for {$S$}-arithmetic groups.
\newblock {\em J. Topol.}, 9(1):51--90, 2016.
\newblock \hDOI{10.1112/jtopol/jtv034}. \MR{3465840}.

\bibitem[RV16]{RV}
Holger Reich and Marco Varisco.
\newblock On the {A}dams isomorphism for equivariant orthogonal spectra.
\newblock {\em Algebr. Geom. Topol.}, 16(3):1493--1566, 2016.
\newblock \hDOI{10.2140/agt.2016.16.1493}. \MR{3523048}.

\bibitem[Sch79]{Schneider}
Peter Schneider.
\newblock {\"U}ber gewisse {G}aloiscohomologiegruppen.
\newblock {\em Math. Z.}, 168(2):181--205, 1979.
\newblock \hDOI{10.1007/BF01214195}. \MR{544704}.

\bibitem[Ser55]{Serre-Faisceaux}
Jean-Pierre Serre.
\newblock Faisceaux alg\'ebriques coh\'erents.
\newblock {\em Ann. of Math. (2)}, 61:197--278, 1955.
\newblock \hDOI{10.2307/1969915}. \MR{0068874}.

\bibitem[Ser58]{Serre1957}
Jean-Pierre Serre.
\newblock Modules projectifs et espaces fibr\'es \`a fibre vectorielle.
\newblock In {\em S\'eminaire {P}. {D}ubreil, {M}.-{L}. {D}ubreil-{J}acotin et
  {C}. {P}isot, 1957/58, {F}asc. 2, {E}xpos\'e 23}, page~18. Secr\'etariat
  math\'ematique, Paris, 1958.
\newblock \MR{0177011}.

\bibitem[Ser78]{Serre}
Jean-Pierre Serre.
\newblock {\em Repr\'esentations lin\'eaires des groupes finis}.
\newblock Hermann, Paris, revised edition, 1978.
\newblock \MR{543841}.

\bibitem[Sta65]{Stallings}
John Stallings.
\newblock Whitehead torsion of free products.
\newblock {\em Ann. of Math. (2)}, 82:354--363, 1965.
\newblock \hDOI{10.2307/1970647}. \MR{0179270}.

\bibitem[Swa60]{Swan}
Richard~G. Swan.
\newblock Induced representations and projective modules.
\newblock {\em Ann. of Math. (2)}, 71:552--578, 1960.
\newblock \hDOI{10.2307/1969944}. \MR{0138688}.

\bibitem[Swa92]{Swan-generalize-Serre}
Richard~G. Swan.
\newblock Gubeladze's proof of {A}nderson's conjecture.
\newblock In {\em Azumaya algebras, actions, and modules ({B}loomington, {IN},
  1990)}, volume 124 of {\em Contemp. Math.}, pages 215--250. Amer. Math. Soc.,
  Providence, RI, 1992.
\newblock \hDOI{10.1090/conm/124/1144038}. \MR{1144038}.

\bibitem[Swa95]{Swan-higher}
Richard~G. Swan.
\newblock Higher algebraic {$K$}-theory.
\newblock In {\em {$K$}-theory and algebraic geometry: connections with
  quadratic forms and division algebras ({S}anta {B}arbara, {CA}, 1992)},
  volume~58 of {\em Proc. Sympos. Pure Math.}, pages 247--293. Amer. Math.
  Soc., Providence, RI, 1995.
\newblock \MR{1327284}.

\bibitem[UW17]{Ullmann-Wu}
Mark Ullmann and Xiaolei Wu.
\newblock Note on the injectivity of the {L}oday assembly map.
\newblock {\em J. Algebra}, 489:460--462, 2017.
\newblock \hDOI{10.1016/j.jalgebra.2017.06.037}. \MR{3686987}.

\bibitem[Wal78a]{WaldhausenII}
Friedhelm Waldhausen.
\newblock Algebraic {$K$}-theory of generalized free products. {III}, {IV}.
\newblock {\em Ann. of Math. (2)}, 108(2):205--256, 1978.
\newblock \hDOI{10.2307/1971166}. \MR{0498808}.

\bibitem[Wal78b]{Waldhausen-A1}
Friedhelm Waldhausen.
\newblock Algebraic {$K$}-theory of topological spaces. {I}.
\newblock In {\em Algebraic and geometric topology ({P}roc. {S}ympos. {P}ure
  {M}ath., {S}tanford {U}niv., {S}tanford, {C}alif., 1976), {P}art 1}, Proc.
  Sympos. Pure Math., XXXII, pages 35--60. Amer. Math. Soc., Providence, RI,
  1978.
\newblock \MR{520492}.

\bibitem[Weg12]{Wegner-CAT0}
Christian Wegner.
\newblock The {$K$}-theoretic {F}arrell-{J}ones {C}onjecture for
  {CAT}(0)-groups.
\newblock {\em Proc. Amer. Math. Soc.}, 140(3):779--793, 2012.
\newblock \hDOI{10.1090/S0002-9939-2011-11150-X}. \MR{2869063}.

\bibitem[Weg15]{Wegner-solv}
Christian Wegner.
\newblock The {F}arrell-{J}ones conjecture for virtually solvable groups.
\newblock {\em J. Topol.}, 8(4):975--1016, 2015.
\newblock \hDOI{10.1112/jtopol/jtv026}. \MR{3431666}.

\bibitem[Wei05]{Weibel05}
Charles~A. Weibel.
\newblock Algebraic {$K$}-theory of rings of integers in local and global
  fields.
\newblock In {\em Handbook of {$K$}-theory. {V}ol.~1}, pages 139--190.
  Springer, Berlin, 2005.
\newblock \hDOI{10.1007/3-540-27855-9_5}. \MR{2181823}.

\bibitem[Wei13]{Weibel}
Charles~A. Weibel.
\newblock {\em The {$K$}-book. {A}n introduction to algebraic $K$-theory},
  volume 145 of {\em Graduate Studies in Mathematics}.
\newblock Amer. Math. Soc., Providence, RI, 2013.
\newblock \MR{3076731}.

\bibitem[WW95]{Weiss-Williams}
Michael Weiss and Bruce Williams.
\newblock Assembly.
\newblock In {\em Novikov conjectures, index theorems and rigidity, {V}ol.\ 2
  ({O}berwolfach, 1993)}, volume 227 of {\em London Math. Soc. Lecture Note
  Ser.}, pages 332--352. Cambridge Univ. Press, Cambridge, 1995.
\newblock \hDOI{10.1017/CBO9780511629365.014}. \MR{1388318}.

\end{thebibliography}

\printindex

\vfill

\end{document}